\documentclass[final, 11pt,reqno]{amsart}
\usepackage{amssymb,amsmath,graphicx,amsfonts,euscript}
\usepackage{color}
\usepackage[pagewise]{lineno}
\usepackage{showkeys}
\usepackage{fancyhdr}
\usepackage[margin=1in]{geometry}
\usepackage{titletoc}
\usepackage[linktoc=page]{hyperref}
\hypersetup{colorlinks=false, pdfstartview=FitV, linkcolor=blue,citecolor=blue, urlcolor=blue}
\setlength{\textheight}{8.9in} \setlength{\textwidth}{5.9in}
\setlength{\oddsidemargin}{0.2in} \setlength{\evensidemargin}{0.2in}
\setlength{\parindent}{0.2in}
\setlength{\topmargin}{0.2in} \setcounter{section}{0}
\setcounter{figure}{0} \setcounter{equation}{0}
\newtheorem{theorem}{\bf Theorem}[section]

\newtheorem{proposition}[theorem]{\bf Proposition}

\newtheorem{remark}{\bf Remark}

\newtheorem{lemma}[theorem]{\bf Lemma}

\def \div {\,{\rm div}\,}
\def \Re {\,{\rm Re}\,}

\newcommand{\beq}{\begin{equation}}
\newcommand{\eeq}{\end{equation}}
\newcommand{\ben}{\begin{eqnarray}}
\newcommand{\een}{\end{eqnarray}}
\newcommand{\beno}{\begin{eqnarray*}}
\newcommand{\eeno}{\end{eqnarray*}}

\numberwithin{equation}{section}
\subjclass[2010]{35Q30,  35Q35, 76E06}
\keywords{3-D Navier-Stokes equation, Planar helical flow,  Nonlinear stability}

\geometry{left=3cm}
\geometry{top=2.2cm}
\geometry{bottom=1.8cm}

\makeatother

\title[Nonlinear stability for the planar helical flow]{Nonlinear stability and transition threshold for the planar helical flow}

\author[Binbin Shi]{Binbin Shi}
\address{School of Mathematics and Statistics, Nanjing University of Science and Technology, Nanjing,210094, P.R. China.}
\email{shibb@njust.edu.cn}

\author[Yucheng Wang]{Yucheng Wang}
\address{School of Business Informatics and Mathematics, University of Mannheim, Mannheim, 68159, Germany.}
\email{yucheng.wang@uni-mannheim.de}

\begin{document}

\begin{abstract}

In this paper, we study the nonlinear stability for the 3-D planar helical flow $(\delta^2\sin(m_0 y),\delta^2\cos(m_0 y),0)$  on torus $\mathbb{T}^3=\{(x_1,x_2,y)\big|x_1,x_2\in \mathbb{T}_{2\pi}, y\in \mathbb{T}_{2\pi \delta}, \delta\geq1\}$ for high Reynolds number $Re$. We prove that if the initial velocity $U_0$ satisfies
$$
\left\|U_0-(\delta^2\sin(m_0 y),\delta^2\cos(m_0 y),0)\right\|_{X_0}\leq c_0 Re^{-7/4}
$$
for some $c_0>0$ independent of $Re$, then the solution of 3-D incompressible Navier-Stokes equation is global in time and does not transit away from the planar helical flow. Here $\delta>1, m_0=\delta^{-1}$ and the norm $\|\cdot\|_{X_0}$ is defined in \eqref{eq:1.8}. This is a nonlinear stability result for 3-D non-shear flow and the transition threshold is less than $7/4$.

\end{abstract}

\maketitle

\titlecontents{section}[0pt]{\vspace{0\baselineskip}}
{\thecontentslabel\quad}{}%
{\hspace{0em}\titlerule*[10pt]{$\cdot$}\contentspage}

\titlecontents{subsection}[1em]{\vspace{0\baselineskip}}
{\thecontentslabel\quad}{}%
{\hspace{0em}\titlerule*[10pt]{$\cdot$}\contentspage}

\titlecontents{subsubsection}[2em]{\vspace{0\baselineskip}}
{\thecontentslabel\quad}{}%
{\hspace{0em}\titlerule*[10pt]{$\cdot$}\contentspage}

\setcounter{tocdepth}{2}
\tableofcontents

\section{Introduction}\label{sec.1}

In this paper, we consider the three dimensional incompressible Navier-Stokes equation on torus $\mathbb{T}^3=\{(x_1,x_2,y)\big|x_1,x_2\in \mathbb{T}_{2\pi}, y\in \mathbb{T}_{2\pi \delta}, \delta\geq1\}$
\begin{equation}\label{eq:1.1}
\begin{cases}
\partial_t U-\nu\Delta U+U\cdot\nabla U+\nabla P=F,\\
\nabla\cdot U=0,\\
U(0,x_1,x_2,y)=U_0(x_1,x_2,y).
\end{cases}
\end{equation}
Here $\nu=Re^{-1}$ is the viscosity coefficient and $Re$ is the Reynolds number, $U(t,x_1,x_2,y)$ is the velocity field, $P(t,x_1,x_2,y)$ is the pressure, and $F$ is the external force given by
$$
F=\left(\nu\sin(m_0 y),\nu\cos(m_0 y),0\right),
$$
where $m_0=\delta^{-1}$. A non-shear steady solution of \eqref{eq:1.1} is given by
\begin{equation}\label{eq:1.2}
U^*=\left(\delta^2\sin (m_0 y),\delta^2\cos (m_0 y),0 \right),\ \ \ P^*=C,
\end{equation}
which is called the {\em planar helical flow}, see~\cite{FBW.2022}, where $C$ is a constant. In this paper, we want to understand the hydrodynamics stability of this flow at high Reynolds number~($\nu\rightarrow 0$).

\vskip .05in

\subsection{Background and previous works}

Beginning with Reynolds's famous experiment~\cite{Reynolds1883} in 1883, the hydrodynamics stability has been an active field in fluid mechanics, which is mainly concerned with the transition of motion from laminar to turbulent flow at high Reynolds number, see~\cite{DR.1982,SH.2001,TTRD.1993,Yaglom.2012}. It is well known that some laminar flows such as Couette flow, Poiseuille flow and Kolmogorov flow are linearly stable for any Reynolds numbers, which are called the shear flow. However, these flows could be unstable and transition to turbulence to even small perturbations when the Reynolds numbers are large, which is referred to as subcritical transition \cite{DHB.1992}. Up to now, the hydrodynamics stability theory still lacks a good understanding~\cite{Eckert.2010}. Based on the research on the hydrodynamics stability, Trefethe et al. first proposed the {\em transition threshold problem}: How much disturbance will lead to the instability of the flow and the dependence of disturbance on Reynolds number? see~\cite{TTRD.1993}. And mathematical version was formulated in~\cite{BGM.2019}:~{\em Given a norm $\|\cdot\|_{X}$, find a $\beta=\beta(X)$ such that}
$$
\begin{aligned}
&\left\|V_0\right\|_{X}\ll Re^{-\beta}\Rightarrow stability,\\
&\left\|V_0\right\|_{X}\gg Re^{-\beta}\Rightarrow instability.
\end{aligned}
$$
Here the exponent $\beta$ is referred to as the transition threshold and $V_0$ is initial disturbance. There are a lot of works devoted to estimating $\beta$ for some important laminar flows such as shear flow via the numerical or asymptotic analysis, see~\cite{Chapman.2002,LHR.1994,RSBH.1998} and references therein.

\vskip .05in

Recently, there is a lot of important progress on the transition threshold problem for some shear flow via rigorous mathematical analysis. In the case of two dimensions, many scholars have conducted research and made progress, we will not introduce specifically it here, we can see the Couette flow~\cite{BMV.2016,BVW.2018,CLWZ.2020,MZ.2022,WZ.2023}, the Poiseuille flow~\cite{CEW.2020,Zotto.2023,DL.2022} and the Kolmogorov flow~\cite{LX.2019,WZZ.2020}. As the fact that physical space is three dimensional case, the three dimensional transition threshold problems are important and the most concerning for scholars. In particular, for the 3-D Couette flow, Bedrossian et al. proved that the transition threshold is less than 1 for Gevrey class perturbation~\cite{BGM.2020} and the transition threshold is less than 3/2 for Sobolev perturbation~\cite{BGM.2017}. Wei et al. proved that the transition threshold is less than 1 for Sobolev perturbation~\cite{WZ.2021}. Chen et al. considered transition threshold problem of the 3-D Couette flow with non-slip boundary condition and showed that the transition threshold is less than 1 for Sobolev perturbation~\cite{CWZ.0001}. For the 3-D plane Poiseuille flow, Chen et al. considered transition threshold problem with non-slip boundary condition and showed that the transition threshold is less than 7/4 for Sobolev perturbation~\cite{CDLZ.0002}. For the 3-D Kolmogorov flow, Li et al. proved that the transition threshold is less than 7/4 for Sobolev perturbation~\cite{LWZ.2020}. Here the transition threshold should not be optimal, some more subtle analysis is needed to improve it. In addition, the nonlinear stability and transition threshold problem for other situations have also been studied by scholars, the related research can refer to~\cite{AHL.0003,BHIW.0004,DWZ.2021,Liss.2020,ZZ.2023}. While for the case of pipe flow~\cite{Kerswell.2005}, it is an ancient and unsolved problem for understanding hydrodynamic stability. A classical example of laminar flow is the 3-D pipe Poiseuille flow, Chen et al.~proved the linear stability at high Reynolds number regime~\cite{CWZ.2023}. And the transition threshold of this flow may be $\beta=1$ by experimental and numerical results, see~\cite{HJM.2003,MM.2007}. However, it is challenging to solve this conjecture.

\vskip .05in
In mathematics and physics, the Arnold-Beltrami-Childress ($ABC$) flow
$$
U_{ABC}(x_1,x_2,y)=\left(A\sin y+C\cos x_2, A\cos y+B\sin x_1, B\cos x_1+C\sin x_2\right),
$$
describes a kind of stationary flow of incompressible fluid with periodic boundary conditions, where $U_{ABC}$ is the velocity vector field, $A$, $B$ and $C$ are arbitrary constants. The $ABC$ flow was first discovered by Arnold \cite{Arnold.1965} as a class of three dimensional steady-state solutions of the Euler equations or the Navier-Stokes equations with external force per unit mass. The $ABC$ flow is known in fluid mechanics, which are 3-D simple model  and have aroused wide interest in nonlinear dynamics, hydrodynamics, and magnetohydrodynamics. For different constants A, B and C, the $ABC$ flow has been extensively studied in analysis and numerical, some specific research contents can be seen in the reference \cite{Childress.1970,DU.2018,DU.201802,DFGHMS.1986,GF.1987,HZD.1998,MXYZ.2016,QL.2023,ZKLH.1993}. In this paper, the planar helical flow is  a kind of $ABC$ flow if $A=\delta^2, B=C=0$ and periodic domain is $\mathbb{T}_{2\pi}\times\mathbb{T}_{2\pi}\times \mathbb{T}_{2\pi\delta}, \delta\geq1$. Thus, we also have good physical motivation to study planar helical flow.

\vskip .05in

To our knowledge, the study of stability and transition of laminar flow focuses on the shear flow case. We want to consider the nonlinear stability and transition threshold problems of planar helical flow in this paper, which is a non-shear incompressible flow, see~\eqref{eq:1.2}. In~\cite{FBW.2022}, we studied the enhanced dissipation of planar helical flow, and the enhanced dissipation rate is the same as that of the 3-D Kolmogorov flow~\cite{LWZ.2020}. However, the conditions of enhanced dissipation are different, the enhanced dissipation of shear flow occurs in the non-zero mode of $x_1$, while the enhanced dissipation of planar helical flow occurs in the non-zero mode of $x_1$ or $x_2$. Otherwise, the planar helical flow and shear flow have different forms of movement, the velocity of shear flow changes and the direction remains unchanged if $y$ changes, while the velocity remains unchanged and direction of planar helical flow change. Obviously, the planar helical flow is a laminar flow. Based on these, it is an interesting problem for considering the stability and transition of planar helical flow.

\vskip .05in

\subsection{Main problem and result}
In this paper, we study the nonlinear stability and transition threshold problems of planar helical flow via rigorous mathematical analysis. This question is motivated by works of Li et al.~\cite{LWZ.2020} and Feng et al.~\cite{FBW.2022}. In fact, since the planar helical flow in \eqref{eq:1.2} is a steady solution of 3-D incompressible Navier-Stokes equation \eqref{eq:1.1}, we consider the stability of solution to \eqref{eq:1.1} around the planar helical flow. To this end, we introduce the perturbation $V=(v_1,v_2,v_3)=U-U^*$, which satisfies from \eqref{eq:1.1} that
\begin{equation}\label{eq:1.3}
\begin{cases}
(\partial_t+\delta^2\sin (m_0 y)\partial_{x_1}+\delta^2\cos (m_0 y)\partial_{x_2}-\nu\Delta)v_1+\delta\cos(m_0 y)v_3=-\partial_{x_1}P-V\cdot\nabla v_1,\\
(\partial_t+\delta^2\sin (m_0 y)\partial_{x_1}+\delta^2\cos (m_0 y)\partial_{x_2}-\nu\Delta)v_2-\delta\sin(m_0 y)v_3=-\partial_{x_2}P-V\cdot\nabla v_2,\\
(\partial_t+\delta^2\sin (m_0 y)\partial_{x_1}+\delta^2\cos (m_0 y)\partial_{x_2}-\nu\Delta)v_3=-\partial_{y}P-V\cdot\nabla v_3,\\
V(0,x_1,x_2,y)=V_0(x_1,x_2,y),
\end{cases}
\end{equation}
where $\div V=0$ and $U^\ast$ is given in \eqref{eq:1.2}. Define
\begin{equation}\label{eq:1.4}
\omega_3=\partial_{x_1}v_2-\partial_{x_2}v_1,
\end{equation}
the \eqref{eq:1.3} becomes
\begin{equation}\label{eq:1.5}
\begin{cases}
(\partial_t+\delta^2\sin (m_0 y)\partial_{x_1}+\delta^2\cos (m_0 y)\partial_{x_2}-\nu\Delta)\Delta v_3\\
\ \ \ \ \ \ \ \ +\sin (m_0 y)\partial_{x_1}v_3+\cos (m_0 y)\partial_{x_2}v_3=-\partial_y\Delta p-\Delta(V\cdot\nabla v_3),\\
(\partial_t+\delta^2\sin (m_0 y)\partial_{x_1}+\delta^2\cos (m_0 y)\partial_{x_2}-\nu\Delta)\omega_3\\
\ \ \ \ \ \ \ \ -\delta\sin(m_0 y)\partial_{x_1}v_3-\delta\cos(m_0 y)\partial_{x_2}v_3=\nabla\cdot(-V\omega_3+\omega v_3),
\end{cases}
\end{equation}
where
\begin{equation}\label{eq:1.6}
\omega=\nabla\times V=(\omega_1,\omega_2,\omega_3), \ \ \ p=-\Delta^{-1}\left(\sum_{i,j=1}^{3}\partial_iv_j\partial_jv_i \right),
\end{equation}
and $\partial_1=\partial_{x_1}, \partial_2=\partial_{x_2}, \partial_3=\partial_{y}$. The idea of writing the coupled system to $\Delta v_3$ and $\omega_3$ in \eqref{eq:1.5} from  \eqref{eq:1.3} may go back to Kelvin's original paper \cite{Kelvin.1887} and the works of Li et al.~\cite{LWZ.2020}. In this paper, we mainly consider the perturbation equations \eqref{eq:1.3} and \eqref{eq:1.5}.

\vskip .05in

In \cite{FBW.2022}, we introduced the enhanced dissipation of planar helical flow. We know that the zero mode part is one dimension and non-zero part is three dimensions, and enhanced dissipation only occurs in the non-zero part of equation. The goal of this paper is to study the stability threshold of  planar helical flow, the enhanced dissipation of this flow plays an important role. Thus we need to analysis the zero mode and non-zero mode of \eqref{eq:1.3}. Similar to \cite{FBW.2022}, we define the zero mode and non-zero mode for $f(x_1,x_2,y)$, it is as follows
\begin{equation}\label{eq:1.7}
P_0f=\frac{1}{4\pi^2}\int_{\mathbb{T}^2}f(x_1,x_2,y)dx_1dx_2,\ \ \ f_{\neq}=P_{\neq}f=f-P_0f.
\end{equation}
We denote the Sobolev space $X_0(\mathbb{T}^3)$ with the norm
\begin{equation}\label{eq:1.8}
\big\|f\big\|^2_{X_0}=\big\|P_0f\big\|^2_{H^2}+\big\|\partial_{x_1}f\big\|^2_{H^2}+\big\|\partial_{x_2}f\big\|^2_{H^2}.
\end{equation}
Our main result is stated as follows
\begin{theorem}\label{thm:1.1}
Let $\delta>1$ and initial data $V_0\in X_0(\mathbb{T}^3)$, there exist $c_0,\epsilon,\nu_0\in (0,1)$ and a positive constant $C_0$ independent of $\nu$, such that if for any $\nu\in (0,\nu_0)$ and
$$
\left\|V_0\right\|_{X_0}\leq c_0\nu^{7/4}.
$$
Then the solution $V$ of \eqref{eq:1.3} is global in time and the following stability estimates
$$
\begin{aligned}
&\left\|v_3\right\|_{X_0}+e^{\epsilon\nu^{1/2}t}\left(\big\|\partial_{x_1}(\Delta v_3)\big\|_{L^2}+\big\|\partial_{x_2}(\Delta v_3)\big\|_{L^2}\right)\\
&\ \ \ \ +e^{\epsilon\nu^{1/2}t}\left(\big\|\partial_{x_1}\omega_3\big\|_{L^2}+\big\|\partial_{x_2}\omega_3\big\|_{L^2}\right)\leq C_0\left\|V_0\right\|_{X_0}
\end{aligned}
$$
and
$$
\left\|V\right\|_{X_0}\leq C_0\nu^{-1}\left\|V_0\right\|_{X_0}
$$
hold, where the $\omega_3$ is defined in \eqref{eq:1.4}.
\end{theorem}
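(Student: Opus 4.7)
\emph{Plan of proof.}
The plan is to run a continuity/bootstrap argument in the $X_{0}$-norm, combining the enhanced dissipation of the planar helical flow established in \cite{FBW.2022} on the non-zero modes with a one-dimensional heat-type estimate on the zero mode $P_{0}V(t,y)$; the overall structure parallels the treatment of the 3-D Kolmogorov flow by Li--Wei--Zhang \cite{LWZ.2020}. Because $U^{\ast}$ depends only on $y$, the operator $P_{0}$ annihilates the background transport ($P_{0}(U^{\ast}\!\cdot\!\nabla V) = 0$ and $P_{0}(\partial_{x_{j}}P) = 0$ for $j=1,2$), so the zero mode satisfies a 1-D heat equation on $\mathbb{T}_{2\pi\delta}$ driven only by the nonlinear forcing $P_{0}(V\cdot\nabla V)$ and by the linear Coriolis-type coupling $(\delta\cos(m_{0}y),-\delta\sin(m_{0}y),0)P_{0}v_{3}$, which is the lift-up mechanism responsible for the $\nu^{-1}$ transient growth of $\|V\|_{X_{0}}$. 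For the non-zero modes I pass to the pressure-free coupled system \eqref{eq:1.5} for $(\Delta v_{3},\omega_{3})$, on which the enhanced dissipation acts.

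For the non-zero modes I perform weighted $L^{2}$ estimates with the time weight $e^{2\epsilon\nu^{1/2}t}$ on the quantities $\partial_{x_{j}}(\Delta v_{3})$ and $\partial_{x_{j}}\omega_{3}$, $j=1,2$. The linear part of \eqref{eq:1.5} produces a coercive contribution of size $\nu^{1/2}\|\cdot\|_{L^{2}}^{2}$ from the enhanced dissipation of \cite{FBW.2022}, which dominates the exponential weight and leaves the viscous dissipation as a spare. The off-diagonal couplings $\pm\sin(m_{0}y)\partial_{x_{j}}v_{3}$ and $\pm\cos(m_{0}y)\partial_{x_{j}}v_{3}$ between the $\Delta v_{3}$ and $\omega_{3}$ equations are absorbed by taking a suitable linear combination of the two energies so that the cross terms cancel. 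The nonlinearities $\Delta(V\cdot\nabla v_{3})$ and $\nabla\cdot(-V\omega_{3}+\omega v_{3})$ are decomposed into (zero)$\times$(zero), (zero)$\times$(nonzero), and (nonzero)$\times$(nonzero) interactions; each piece is estimated by product and Sobolev-embedding inequalities together with Poincar\'e (which controls $V_{\neq}$ by $\partial_{x_{1}}V_{\neq},\partial_{x_{2}}V_{\neq}$) and integration in time against $e^{-\epsilon\nu^{1/2}t}$ to absorb the lost factors of $\nu^{-1/2}$.

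The bootstrap hypotheses are that both bounds in the theorem hold on $[0,T_{\ast})$ with constants doubled; the goal is to strictly improve them and conclude $T_{\ast}=\infty$. The main obstacle is the zero-mode equation: its (nonzero)$\times$(nonzero)$\to$zero forcing drives the $L^{2}$-growth of $P_{0}V$ and then feeds the lift-up coupling, so the bookkeeping must be carried out at the $H^{2}$-level in order that this growth does not exceed $\nu^{-1}\|V_{0}\|_{X_{0}}$ (the divergence-free constraint $\partial_{y}P_{0}v_{3} = 0$ is used to control $P_{0}v_{3}$ on a faster time scale than $P_{0}v_{1}, P_{0}v_{2}$). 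Balancing the $e^{-\epsilon\nu^{1/2}t}$-decay on the non-zero side against the $\nu^{-1}$ transient growth of the zero mode, while simultaneously keeping the feedback of the zero mode into the non-zero equation controlled, produces the threshold $\|V_{0}\|_{X_{0}}\lesssim\nu^{7/4}$; under this smallness the hypotheses are strictly improved and the bootstrap closes on $[0,\infty)$. Obtaining the exponent $7/4$, rather than a larger one, requires the full enhanced-dissipation rate from \cite{FBW.2022} together with the $H^{2}$-scale of the $X_{0}$-norm.
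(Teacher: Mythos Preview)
Your overall architecture---bootstrap, zero/non-zero splitting, the $(\Delta v_{3},\omega_{3})$ system, and the one-dimensional heat estimate on $P_{0}V$ with lift-up---matches the paper. The linear-combination idea is also right and is exactly what the paper exploits: $\Delta v_{3}+m_{0}\omega_{3}$ satisfies a clean $\mathcal{H}$ equation, so its enhanced-dissipation decay follows from \cite{FBW.2022}. However, this combination gives you only \emph{one} relation; to recover $\Delta v_{3}$ and $\omega_{3}$ separately you still need an independent estimate on $\partial_{x_{j}}\Delta v_{3}$, and the first equation of \eqref{eq:1.5} is governed by the \emph{nonlocal} operator $\mathcal{L}$ in \eqref{eq:1.11}, not by $\mathcal{H}$. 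The reference \cite{FBW.2022} supplies the pseudospectral bound and semigroup decay only for $\mathcal{H}$; the corresponding estimate for $\mathcal{L}$ is new to this paper (Lemma~\ref{lem:3.2} and Proposition~\ref{prop:3.3}, proved via the modified inner product $\langle\cdot,\cdot\rangle_{\ast}$ of \eqref{eq:1.13} borrowed from \cite{LWZ.2020}). Your plan cites \cite{FBW.2022} for the full enhanced dissipation on \eqref{eq:1.5} and so silently assumes a result that is not available there; without the $\mathcal{L}$ estimate the $\Delta v_{3}$ bound does not close.

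A second, related issue is the phrase ``coercive contribution of size $\nu^{1/2}\|\cdot\|_{L^{2}}^{2}$ from the enhanced dissipation.'' Enhanced dissipation is not a pointwise-in-time coercivity that appears in a direct weighted energy identity; a plain $L^{2}$ energy estimate on $(\partial_{t}+\mathcal{H})u=0$ or $(\partial_{t}+\mathcal{L})u=0$ yields only $\nu\|\nabla u\|_{L^{2}}^{2}$, never $\nu^{1/2}\|u\|_{L^{2}}^{2}$. The paper obtains the $e^{-c\nu^{1/2}t}$ rate through pseudospectral bounds and the Gearhart--Pr\"uss lemma (Lemma~\ref{lem:2.1}), and then transfers this to the inhomogeneous problem by Duhamel together with a decomposition of time into intervals of length $\nu^{-1/2}$ (Step~2 of Proposition~\ref{prop:4.1}); this machinery is what produces the $X_{ed}$ estimate, and your plan should make that route explicit rather than treating the $\nu^{1/2}$ damping as if it came from an energy identity.
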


Let us give some explanations for the Theorem \ref{thm:1.1}. It is a result for the transition threshold problem of non-shear flow and stability threshold $\beta\leq 7/4$. We believe that this result is not optimal. From the Theorem \ref{thm:1.1}, one has
$$
\big\|\partial_{x_1}(\Delta v_3)\big\|_{L^2}+\big\|\partial_{x_2}(\Delta v_3)\big\|_{L^2}
+\big\|\partial_{x_1}\omega_3\big\|_{L^2}+\big\|\partial_{x_2}\omega_3\big\|_{L^2}\lesssim e^{-\epsilon\nu^{1/2}t}\big\|V_0\big\|_{X_0},
$$
and since
\begin{equation}\label{eq:1.9}
\partial_{x_1}P_0=\partial_{x_2}P_0=0,\ \ \ P_{\neq}\partial_{x_1}=\partial_{x_1},\ \ \ P_{\neq}\partial_{x_2}=\partial_{x_2},
\end{equation}
then we know that the $\partial_{x_1}\Delta v_3$, $\partial_{x_2}\Delta v_3$, $\partial_{x_1}\omega_3$ and $\partial_{x_2}\omega_3$ are non-zero modes of \eqref{eq:1.5} and have enhanced dissipation decay. And we imply from
$$
\left\|V\right\|_{X_0}\lesssim\left\|V_0\right\|_{X_0}
$$
that the planar helical flow is nonlinear stability in $X_0$ sense if the perturbation is $o(\nu^{7/4})$.

\vskip .05in

\subsection{Ideas and sketch of the proof}
In the following, we briefly state our main ideas of the proof. Firstly, since the proof relies on the enhanced dissipation decay estimate for linearized equation of \eqref{eq:1.5}, it is as follows
\begin{equation}\label{eq:1.10}
\begin{cases}
(\partial_t+\mathcal{L})\Delta v_3=0,\\
(\partial_t+\mathcal{H})\omega_3=\delta\sin(m_0 y)\partial_{x_1}v_3+\delta\cos(m_0 y)\partial_{x_2}v_3,
\end{cases}
\end{equation}
the \eqref{eq:1.10} is linearized Navier-Stokes equation around planar helical flow. Here linearized operator
\begin{equation}\label{eq:1.11}
\mathcal{L}=\delta^2\sin (m_0 y)\partial_{x_1}+\delta^2\cos (m_0 y)\partial_{x_2}-\nu\Delta +\sin (m_0 y)\partial_{x_1}\Delta^{-1}+\cos (m_0 y)\partial_{x_2}\Delta^{-1},
\end{equation}
and
\begin{equation}\label{eq:1.12}
\mathcal{H}=\delta^2\sin (m_0 y)\partial_{x_1}+\delta^2\cos (m_0 y)\partial_{x_2}-\nu\Delta.
\end{equation}
To study the linearized Navier-Stokes equation \eqref{eq:1.10}, we first consider the resolvent estimate of linearized operator $\mathcal{L}$ and $\mathcal{H}$, where the $\mathcal{L}$ is nonlocal. By scaling and taking the Fourier transform in $x_1$ and $x_2$, the linearized operator $\mathcal{L}$ and $\mathcal{H}$ are reduced to
$$
\mathcal{L}_{k_1,k_2}=-\nu m^2_0\partial^2_{y}+i|k|\delta^2\sin (y+\alpha_k)\left(1-(\alpha^2-\partial^2_{y})^{-1}\right),
$$
and
$$
\mathcal{H}_{k_1,k_2}
=-\nu m^2_0\partial^2_{y}+i|k|\delta^2\sin (y+\alpha_k),
$$
where
$$
|k|=\sqrt{k_1^2+k_2^2},\ \ k=(k_1,k_2)\neq \mathbf{0}, \ \ \alpha^2=|k|^2\delta^2, \ \ \cos \alpha_k=\frac{k_1}{|k|},\ \ \sin \alpha_k=\frac{k_2}{|k|}.
$$
The resolvent estimate of operator $\mathcal{H}_{k_1,k_2}$ can refer to \cite{FBW.2022}. The resolvent estimate of operator $\mathcal{L}_{k_1,k_2}$ is difficult, since it is nonlocal and non self-adjoint. In this paper, we use the inner product $\langle\cdot,\cdot\rangle_{\ast}$
\begin{equation}\label{eq:1.13}
\langle u,w\rangle_{\ast}=\langle u,w-(\alpha^2-\partial^2_{y})^{-1}w\rangle,\ \ \ \alpha^2>1,
\end{equation}
and the following resolvent estimate
\begin{equation}\label{eq:1.14}
\left\|L_\lambda w\right\|_{L^2}\geq C|\gamma|^{1/2}\left(1-\alpha^{-2}\right)\|w\|_{L^2},
\end{equation}
to solve these difficulties, where
$$
L_\lambda w=i\frac{\gamma}{\nu}\left[(\sin y-\lambda)w+\sin y\varphi\right]-\nu\partial^2_yw,
$$
and $(\partial^2_y-\alpha^2)\varphi=w, \gamma,\alpha,\lambda\in \mathbb{R}$, $\nu>0$ and $|\gamma|\gg \nu^2, \alpha^2>1$, the detailed introduction of \eqref{eq:1.13} and  \eqref{eq:1.14} can be referred to \cite{LWZ.2020}. The specific discussion of resolvent estimate for linearized operator $\mathcal{H}_{k_1,k_2}$ and $\mathcal{L}_{k_1,k_2}$ can be seen in  Section \ref{sec.2} and Section \ref{sec.3}.

\vskip .05in

Motivated by \cite{GGN.2009,LWZ.2020}, we define the pseudospectral quantity $\Psi(H)$ for m-accretive operator $H$, see \eqref{eq:2.3} in Section \ref{sec.2}. The pseudospectral plays an important role in studying the hydrodynamics stability due to the non normality of linearized operator in stability problem of fluid mechanics, see \cite{LWZ.202002,LWZ.2020,Trefethen.1997,TTRD.1993}. And we obtain the pseudospectral bound of the operator $\mathcal{H}_{k_1,k_2}$ and $\mathcal{L}_{k_1,k_2}$ by resolvent estimate in Lemma \ref{lem:2.3} and \ref{lem:3.2}, as follows
$$
\Psi(\mathcal{H}_{k_1,k_2})\geq c_1|k|^{1/2}\nu^{1/2}, \ \ \ \ \Psi(\mathcal{L}_{k_1,k_2})\geq c_2|k|^{1/2}\nu^{1/2}(1-\alpha^{-2})^{3/2}.
$$

\vskip .05in

Next, we consider the enhanced dissipation of planar helical flow. For a m-accretive operator, Wei \cite{Wei.2021} established the semigroup estimate from the pseudospectral, see Lemma \ref{lem:2.1} in Section \ref{sec.2}. Based on the pseudospectral bound of the operator $\mathcal{H}_{k_1,k_2}$ and $\mathcal{L}_{k_1,k_2}$, we will show by Lemma \ref{lem:2.3} and Proposition \ref{prop:3.3} that
$$
\left\|e^{-t\mathcal{H}}P_{\neq}\right\|_{L^2\rightarrow L^2}\lesssim e^{-c\nu^{1/2}t-\nu t}, \ \ \
\left\|e^{-t\mathcal{L}}P_{\neq}\right\|_{L^2\rightarrow L^2}\lesssim e^{-c\nu^{1/2}t-\nu t},
$$
where
$$
c=\min\left\{c_1, c_2(1-\alpha^{-2})^{3/2}\right\}.
$$
However, these two estimates can not obtain the enhanced dissipation of linearized system \eqref{eq:1.10} since the source term $\delta\sin(m_0 y)\partial_{x_1}v_3$ and $\delta\cos(m_0 y)\partial_{x_2}v_3$ in the second equation of  \eqref{eq:1.10}. In this paper, we handle this problem by symmetry of planar helical flow. Specifically, by scaling and taking the Fourier transform in $x_1$ and $x_2$, the \eqref{eq:1.10} becomes
$$
\begin{cases}
\partial_t u+\left(\nu(k_1^2+k_2^2)+\mathcal{L}_{k_1,k_2}  \right)u=0,\\
\partial_t w+\left(\nu(k_1^2+k_2^2)+\mathcal{H}_{k_1,k_2}  \right)w=-i|k|\delta^3\sin (y+\alpha_k)(\alpha^2-\partial_y^2)^{-1}u,\\
u(0,y)=u_0(y),\ \ \ w(0,y)=w_0(y),
\end{cases}
$$
where $\alpha^2=(k_1^2+k_2^2)\delta^2$. In Proposition \ref{prop:3.6}, we will show that for $\alpha^2>1$, one has
$$
\left\|w(t)\right\|_{L^2}\lesssim e^{-at}\left(\left\|w_0\right\|_{L^2}+\left\|u_0\right\|_{L^2}\right),
$$
where $a=\nu(k_1^2+k_2^2)+c|k|^{1/2}\nu^{1/2}$. The linearized Navier-Stokes equation around 3-D Kolmogorov flow has similar problem, since this flow has no good symmetry, the source term was handled by wave operator methods, see \cite{LWZ.2020}. In this paper, we study the nonlinear stability threshold of \eqref{eq:1.3} in the case of $\delta>1(\alpha^2>1)$, while for the $\delta=1 (\alpha^2=1)$ is a challenge problem, thus we only study the linearized equation as a appendix in this paper, see Appendix \ref{sec.A}.

\vskip .05in

Finally, we study the nonlinear stability threshold of \eqref{eq:1.3}. First, we need to establish the enhanced dissipation decay estimate for the  following inhomogeneous system
$$
\begin{cases}
(\partial_t+\mathcal{L})\Delta v_3=\div f,\\
(\partial_t+\mathcal{H})\omega_3=\delta\sin(m_0 y)\partial_{x_1}v_3+\delta\cos(m_0 y)\partial_{x_2}v_3+\div g.
\end{cases}
$$
In Proposition \ref{prop:4.1} and \ref{prop:4.2}, we will obtain the following estimates
$$
\big\|\partial_{x_1}(\Delta v_3)\big\|^2_{X_{ed}}
\lesssim \big\|\partial_{x_1}(\Delta v_3)(0)\big\|^2_{ L^2}+\nu^{-1}\left\|e^{\epsilon\nu^{1/2}t}\partial_{x_1}f\right\|^2_{L^2 L^2},
$$

$$
\big\|\partial_{x_2}(\Delta v_3)\big\|^2_{X_{ed}}
\lesssim \big\|\partial_{x_2}(\Delta v_3)(0)\big\|^2_{ L^2}+\nu^{-1}\left\|e^{\epsilon\nu^{1/2}t}\partial_{x_2}f\right\|^2_{L^2 L^2},
$$

\vskip .02in

$$
\begin{aligned}
\big\|\partial_{x_1}\omega_3\big\|^2_{X_{ed}}
\lesssim& \big\|\partial_{x_1}(\Delta v_3)(0)\big\|^2_{L^2}+\big\|\partial_{x_1}\omega_3(0)\big\|^2_{L^2}\\
&+\nu^{-1}\left(\left\|e^{\epsilon\nu^{1/2}t}\partial_{x_1}f\right\|^2_{L^2L^2}
+\left\|e^{\epsilon\nu^{1/2}t}\partial_{x_1}g\right\|^2_{L^2L^2}   \right),
\end{aligned}
$$
and
$$
\begin{aligned}
\big\|\partial_{x_2}\omega_3\big\|^2_{X_{ed}}
\lesssim& \big\|\partial_{x_2}(\Delta v_3)(0)\big\|^2_{L^2}+\big\|\partial_{x_2}\omega_3(0)\big\|^2_{L^2}\\
&+\nu^{-1}\left(\left\|e^{\epsilon\nu^{1/2}t}\partial_{x_2}f\right\|^2_{L^2L^2}
+\left\|e^{\epsilon\nu^{1/2}t}\partial_{x_2}g\right\|^2_{L^2L^2}   \right),
\end{aligned}
$$
where the norm $\|\cdot\|_{X_{ed}}$ includes the enhanced dissipation decay, which is defined by
\begin{equation}\label{eq:1.15}
\big\|u\big\|^2_{X_{ed}}=\left\|e^{\epsilon\nu^{1/2}t}u\right\|^2_{L^\infty L^2}+\nu^{1/2}\left\|e^{\epsilon\nu^{1/2}t}u\right\|^2_{L^2 L^2}+\nu\left\|e^{\epsilon\nu^{1/2}t}\nabla u\right\|^2_{L^2 L^2},\ \ \ \epsilon\in (0,c).
\end{equation}
Based on these estimates, we introduce the following two energy functionals
$$
\begin{aligned}
\mathcal{E}_1(T)=&\sup_{0\leq t\leq T}\left[\big\|v_3\big\|_{X_0}+e^{\epsilon\nu^{1/2}t}\left(\big\|\partial_{x_1}(\Delta v_3)\big\|_{L^2}+\big\|\partial_{x_2}(\Delta v_3)\big\|_{L^2}\right)\right]\\
&+\sup_{0\leq t\leq T}\left[e^{\epsilon\nu^{1/2}t}\left(\big\|\partial_{x_1}\omega_3\big\|_{L^2}
+\big\|\partial_{x_2}\omega_3\big\|_{L^2}\right)\right],\\
\mathcal{E}_2(T)=&\sup_{0\leq t\leq T}\left\|V(t)\right\|_{X_0},
\end{aligned}
$$
and in Proposition \ref{prop:4.5}, we will show that if $\mathcal{E}_1(T)\leq \varepsilon_1\nu,\ \mathcal{E}_2(T)\leq \varepsilon_1\nu^{3/4}$, one has the following stability estimates
$$
\mathcal{E}_1(T)\leq C_1\left\|V_0\right\|_{X_0},\ \ \ \ \mathcal{E}_2(T)\leq C_1\nu^{-1}\left\|V_0\right\|_{X_0}.
$$
Thus we obtain global nonlinear stability by the smallness of $V_0$ and continuity argument.

\vskip .05in

The stability threshold of planar helical flow is determined by estimates of $P_0v_1$ and $P_0v_2$, which is non enhanced dissipation decay, it is as follows
$$
(\partial_t-\nu\Delta)\partial^2_yP_0v_1=\div\widetilde{F},\ \ \ (\partial_t-\nu\Delta)\partial^2_yP_0v_2=\div \widetilde{G},
$$
where
$$
\widetilde{F}=-\nabla\left( \delta\cos(m_0 y)P_0v_3\right)-\nabla P_0(V\cdot\nabla v_1),
$$
and
$$
\widetilde{G}=-\nabla\left(\delta\sin(m_0 y)P_0v_3\right)-\nabla P_0(V\cdot\nabla v_2).
$$
Here the $-\nabla\left(\delta\cos(m_0 y)P_0v_3\right)$ and $-\nabla\left(\delta\sin(m_0 y)P_0v_3\right)$
are bad terms, which give rise to the lift-up, see \cite{EP.1975,LWZ.2020}. Since the equation is one dimension, we use the energy method to deal with this effect.

\vskip .05in

In this paper, some ideas come from the study of nonlinear stability to 3-D Kolmogorov flow \cite{LWZ.2020}, but there are also differences in the methods of proof.~The stability threshold of 3-D Kolmogorov flow was determined by $P_0v_1$, which is two dimensions zero mode equation and it is non enhanced dissipation decay. In \cite{LWZ.2020}, the estimate of $P_0v_1$ is subtle, the authors overcome this difficulty by decomposing the equation of $P_0v_1$ into a steady equation and a heat equation with good source terms (see \cite[Theorem 6.10]{LWZ.2020}). In this paper, the stability threshold of planar helical flow is determined by $P_0v_1$ and $P_0v_2$, which are non enhanced dissipation decay. Here we only use the energy method to deal with them, the main reason is that the $P_0v_1$ and $P_0v_2$ are one dimension, the details can see Section \ref{sec.4}. In addition, the enhanced dissipation decay of linearized Navier-Stokes equation and the estimate of vorticity are difficult for the 3-D Kolmogorov flow, the authors developed wave operator method to deal with them (see \cite[Proposition 5.2, Proposition 6.5 and Appendix A.2]{LWZ.2020}). For the planar helical flow, we establish the enhanced dissipation decay of linearized Navier-Stokes equation and the estimate of vorticity by energy estimate and semigroup theory. Here we need to use the symmetry of planar helical flow and consider the stability threshold in the sense of the norm $X_0$ (see \eqref{eq:1.8}), the details can see Section \ref{sec.3} and Section \ref{sec.4}.

\vskip .05in

The rest of this paper is arranged as follows. In Section \ref{sec.2}, we introduce some preparations and useful lemmas. In Section \ref{sec.3}, we prove the enhanced dissipation effect of linearized Navier-Stokes equation around planar helical flow. In Section \ref{sec.4}, we establish the nonlinear stability and prove the Theorem \ref{thm:1.1}. In Appendix \ref{sec.A}, we introduce the linearized Navier-Stokes equation around planar helical flow in the case of $\delta=1$.

\vskip .05in

Throughout the paper, we use the standard notations to denote function spaces and use $C$ to denote a generic constant which may vary from line to line.

\vskip .05in

\section{Preliminaries}\label{sec.2}
In what follows, we provide some  notations and the auxiliary results. The details are as follows.

\subsection{Notations}
For quantities $X,Y$, if there exists a constant $C$ such that $X\leq CY$, we write $X\lesssim Y$. In this paper, we will use some norm to study nonlinear stability, such as $\|\cdot\|_{X_0}$ in \eqref{eq:1.8} and $\|\cdot\|_{X_{ed}}$ in \eqref{eq:1.15}. We will also use the norms $\|\cdot\|_{X_1}$, the definition is as follows
\begin{equation}\label{eq:2.1}
\big\|f\big\|^2_{X_1}=\big\|f\big\|^2_{L^\infty L^2}+\nu\big\|\nabla f\big\|^2_{L^2L^2}.
\end{equation}
There are some useful inequalities, which will be used in the proof. Such as
\begin{equation}\label{eq:2.2}
\big\|f\big\|_{H^2}\lesssim\big\|f\big\|_{X_0},\ \ \ \ \big\|f\big\|_{X_1}\lesssim\big\|f\big\|_{X_{ed}},\ \ \ \
\big\|f_{\neq}\big\|_{L^2}\lesssim\big\|\partial_{x_1}f\big\|_{L^2}+\big\|\partial_{x_2}f\big\|_{L^2}.
\end{equation}
In this paper, we denote
$$
\big\|f\big\|^2_{L^2 L^2}=\int_{0}^{T}\big\|f(t)\big\|^2_{L^2}dt,\ \ \ \|f\|^2_{L^2 H^s}=\int_{0}^{T}\|f(t)\|^2_{H^s}dt,
$$
and
$$
\big\|f\big\|^2_{L^\infty H^2}=\sup_{0\leq t\leq T}\big\|f(t)\big\|^2_{H^2},\ \ \ \big\|f\big\|^2_{L^\infty X_0}=\sup_{0\leq t\leq T}\big\|f(t)\big\|^2_{X_0},
$$
where $T$ is finite or infinite.

\subsection{The operator theory and energy inequalities}
We study the enhanced dissipation effect of incompressible flow by resolvent estimate, thus we need to introduce some the operator theory. Let $(\mathcal{X},\|\cdot\|)$ be a complex Hilbert space and let $H$ be a closed linear operator in $\mathcal{X}$ with domain $D(H)$. $H$ is m-accretive if the left open half-plane is contained in the resolvent set with
$$
(H+\lambda I)^{-1}\in \mathcal{B}(\mathcal{X})\,,\quad \|(H+\lambda I)^{-1}\|\leq (Re \lambda)^{-1},\ \  Re\lambda >0,
$$
where $\mathcal{B}(\mathcal{X})$ denotes the set of bounded linear operators on $\mathcal{X}$ with operator norm $\|\cdot\|$ and $I$ is the identity operator, see \cite{Kato.1966,Wei.2021}.

\vskip .05in

We denote $e^{-tH}$ is a  semigroup with $-H$ as generator and define pseudospectral bound
\begin{equation}\label{eq:2.3}
\Psi(H)=\inf\left\{\big\|(H-i\lambda I)f\big\|: f\in D(H), \lambda\in \mathbb{R}, \big\|f\big\|=1\right \}.
\end{equation}
The following result is the Gearchart-Pr\"{u}ss type theorem for m-accretive operators, see \cite{Wei.2021}.
\begin{lemma}\label{lem:2.1}
Let $H$ be an m-accretive operator in a Hilbert space $\mathcal{X}$. Then for any $t\geq 0$, we have
$$
\left\|e^{-tH}\right\|_{\mathcal{X}\rightarrow\mathcal{X}}\leq e^{-t\Psi(H)+\pi/2},
$$
where $\Psi(H)$ is defined in \eqref{eq:2.3}.
\end{lemma}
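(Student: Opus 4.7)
The plan is to combine the contraction property of the semigroup (a consequence of m-accretivity) with a Plancherel-Fourier argument in time, controlling the resolvent along the imaginary axis via $\Psi(H)$, and extracting exponential decay via a shift of $H$. The sharp constant $e^{\pi/2}$ will come out of an optimized choice of time cutoff.

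\textbf{Shift and resolvent bound.} By the Lumer--Phillips theorem, m-accretivity of $H$ ensures that $-H$ generates a $C_0$ contraction semigroup, so $\|e^{-tH}\|_{\mathcal{X}\to\mathcal{X}}\leq 1$, and $t\mapsto\|e^{-tH}f\|$ is non-increasing for every $f\in D(H)$ (by accretivity). Fix $\lambda\in(0,\Psi(H))$, set $\phi(t):=e^{-tH}f$, and pass to the rescaled orbit $v(t):=e^{\lambda t}\phi(t)$, which solves $v'+(H-\lambda I)v=0$. The theorem reduces to showing $\|v(t)\|\leq e^{\pi/2}\|f\|$ uniformly in $t$ and then letting $\lambda\uparrow \Psi(H)$. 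From $\|(H-i\mu)g\|\geq\Psi(H)\|g\|$ and the triangle inequality one gets
\[
\|(H-\lambda-i\mu)g\|\geq (\Psi(H)-\lambda)\|g\|,\qquad g\in D(H),\ \mu\in\mathbb{R},
\]
and m-accretivity of $H$ (together with a continuation/perturbation argument in the real shift) shows that $H-\lambda-i\mu$ is surjective, hence
\[
\|(H-\lambda-i\mu)^{-1}\|_{\mathcal{X}\to\mathcal{X}}\leq \frac{1}{\Psi(H)-\lambda}.
\]

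\textbf{Plancherel estimate on the shifted orbit.} Pick a smooth temporal cutoff $\chi$ equal to $1$ on a neighbourhood of a target time $T$ and of compact support, and set $w:=\chi v$; then $(\partial_t+H-\lambda)w=\chi' v$. Fourier transform in $t$ combined with the resolvent bound yields $\|\widehat{w}(\mu)\|\leq(\Psi(H)-\lambda)^{-1}\|\widehat{\chi' v}(\mu)\|$, and Plancherel promotes this to
\[
\int_{\mathbb{R}}\|w(s)\|^2\,ds\leq \frac{1}{(\Psi(H)-\lambda)^2}\int_{\mathbb{R}}|\chi'(s)|^2\|v(s)\|^2\,ds.
\]
Using monotonicity of $\|\phi\|$ to control $\|v(s)\|=e^{\lambda s}\|\phi(s)\|$ from above, and the plateau of $\chi$ near $T$ to control $\int\|w\|^2$ from below by a constant times $\|v(T)\|^2$, one obtains a pointwise estimate of the form $\|v(T)\|\leq C(\lambda)\|f\|$.

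\textbf{Extracting the sharp constant.} The main technical obstacle is that a naive cutoff produces $C(\lambda)\to\infty$ as $\lambda\uparrow\Psi(H)$, so the sharp constant requires an optimal test profile in $\chi$ that essentially saturates the Plancherel estimate against the contraction bound $\|e^{-tH}\|\leq 1$. Executing this optimization (the crux of Wei's argument) yields $C(\lambda)\leq e^{\pi/2}$ uniformly in $\lambda<\Psi(H)$. Substituting back gives $\|e^{-tH}f\|\leq e^{\pi/2-\lambda t}\|f\|$ and, sending $\lambda\uparrow\Psi(H)$, the claimed inequality.
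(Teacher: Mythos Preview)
The paper does not supply its own proof of this lemma: it is quoted verbatim as the Gearhart--Pr\"uss type theorem of Wei \cite{Wei.2021} and used as a black box. So there is no in-paper argument to compare against; the relevant question is whether your sketch stands on its own.

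Your outline correctly identifies the architecture of Wei's proof: contraction from Lumer--Phillips, a real shift by $\lambda<\Psi(H)$, the uniform resolvent bound $\|(H-\lambda-i\mu)^{-1}\|\le(\Psi(H)-\lambda)^{-1}$ along the shifted imaginary axis, and a Plancherel/cutoff argument converting this into pointwise decay. However, the proposal has a genuine gap at the decisive step. You write that ``executing this optimization \dots\ yields $C(\lambda)\le e^{\pi/2}$ uniformly in $\lambda<\Psi(H)$'' and call this ``the crux of Wei's argument'' --- but you do not carry it out. This is precisely the content of the theorem: a generic cutoff gives a constant that blows up as $\lambda\uparrow\Psi(H)$, and the whole point of Wei's result is the specific construction (an explicit comparison ODE and an exact calculation producing the factor $e^{\pi/2}$) that avoids this. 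Asserting that such an optimization exists is not a proof; it is a restatement of the conclusion.

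There is also a smaller issue in your Plancherel step: you control $\int\|w\|^2$ from below by ``a constant times $\|v(T)\|^2$'' using the plateau of $\chi$ and monotonicity of $\|\phi\|$, but $\|v(s)\|=e^{\lambda s}\|\phi(s)\|$ is not monotone, so turning the $L^2_t$ bound into a pointwise bound at $T$ already requires care that interacts with the constant you are trying to make uniform in $\lambda$. If you want a self-contained proof, you need to either reproduce Wei's explicit computation or cite it; as written, the proposal is a correct roadmap but not a proof.
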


We study the nonlinearity stability of \eqref{eq:1.3}, some results of linear decay and nonlinear estimate are helpful for our proof. First, we introduce the following resolvent estimate, see \cite{LWZ.2020}.
\begin{lemma}\label{lem:2.2}
Given $0<\nu\leq1,|\gamma|\gg \nu^2$,and $\lambda\in \mathbb{R}$, there exists a constant $C>0$ independent of $\nu,\gamma,\lambda$, such that
$$
\left\|L_\lambda w\right\|_{L^2}\geq C|\gamma|^{1/2}\left(1-\alpha^{-2}\right)\left\|w\right\|_{L^2},
$$
where
$$
L_\lambda w=i\frac{\gamma}{\nu}\big[(\sin y-\lambda)w+\sin y\varphi\big]-\nu\partial^2_yw,
$$
and $(\partial^2_y-\alpha^2)\varphi=w$.
\end{lemma}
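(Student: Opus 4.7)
The plan is to follow the multiplier strategy developed in~\cite{LWZ.2020} for Orr--Sommerfeld-type operators associated with the Kolmogorov flow, adapted to the modified inner product $\langle\cdot,\cdot\rangle_\ast$ introduced in~\eqref{eq:1.13}. Write $F:=L_\lambda w$ and $g:=w+\varphi = w - (\alpha^2-\partial_y^2)^{-1}w$, so that
\begin{equation*}
F = \frac{i\gamma}{\nu}\bigl[(\sin y)\,g - \lambda w\bigr] - \nu\,\partial_y^2 w.
\end{equation*}
Because $\alpha^2>1$, the self-adjoint operator $I-(\alpha^2-\partial_y^2)^{-1}$ has spectrum in $[1-\alpha^{-2},1]$, so the associated norm $\|w\|_\ast^2 = \langle w,g\rangle$ satisfies $(1-\alpha^{-2})\|w\|_{L^2}^2 \leq \|w\|_\ast^2 \leq \|w\|_{L^2}^2$. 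The key observation is that pairing $F$ against $g$ (equivalently, testing in $\langle\cdot,\cdot\rangle_\ast$) repackages the local and nonlocal inviscid terms into the single real quadratic form $\int(\sin y-\lambda)\,|g|^2\,dy$, after which the classical critical-layer analysis applies.

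First I would extract a viscous bound by taking the real part of $\langle F,g\rangle$. The inviscid contribution is purely imaginary and drops out, and after integrations by parts using $(\partial_y^2-\alpha^2)\varphi=w$ together with the hypothesis $\alpha^2>1$, one obtains a coercive estimate that controls $\nu\|\partial_y g\|_{L^2}^2$ (together with stronger Sobolev norms of $\varphi$) by $\|F\|_{L^2}\|w\|_{L^2}$. Taking instead the imaginary part of $\langle F,g\rangle$ yields
\begin{equation*}
\frac{|\gamma|}{\nu}\Bigl|\int(\sin y-\lambda)\,|g|^2\,dy - \lambda\|w\|_\ast^2\Bigr| \lesssim \|F\|_{L^2}\|w\|_{L^2} + \nu\|\partial_y w\|_{L^2}\|\partial_y g\|_{L^2},
\end{equation*}
which encodes the inviscid transport on the level of $g$.

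The enhanced rate $|\gamma|^{1/2}$ comes from a critical-layer localization. Introduce a threshold $\delta>0$ and split $\mathbb{T} = \Omega_\delta \cup \Omega_\delta^c$ with $\Omega_\delta := \{y : |\sin y - \lambda|\leq\delta\}$. Away from the critical set the weight $(\sin y-\lambda)^{-1}$ is bounded by $\delta^{-1}$, which (combined with a suitable real cut-off multiplier applied to the imaginary-part identity above) converts the inviscid information into an $L^2$ control $\|g\|_{L^2(\Omega_\delta^c)}^2\lesssim\delta^{-1}(\nu/|\gamma|)\bigl(\|F\|_{L^2}\|w\|_{L^2}+\nu\|\partial_y g\|_{L^2}^2\bigr)$. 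Inside $\Omega_\delta$, whose connected components have length $\mathcal{O}(\delta)$, a one-dimensional Sobolev/Hardy inequality reduces $\|g\|_{L^2(\Omega_\delta)}^2$ to $\delta\|g\|_{L^2}\|\partial_y g\|_{L^2}+\delta^2\|\partial_y g\|_{L^2}^2$, bounded via the viscous step. Choosing $\delta\sim(\nu/|\gamma|)^{1/2}$ (the hypothesis $|\gamma|\gg\nu^2$ ensures $\delta\ll 1$) balances the two pieces and yields $\|g\|_{L^2}^2 \lesssim |\gamma|^{-1/2}\|F\|_{L^2}\|w\|_{L^2}$. The spectral bound $\|g\|_{L^2}^2 \geq (1-\alpha^{-2})\|w\|_{L^2}^2$ then delivers the claim $\|F\|_{L^2} \gtrsim |\gamma|^{1/2}(1-\alpha^{-2})\|w\|_{L^2}$.

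The principal obstacle is the nonlocal inviscid term $\sin y\cdot\varphi$ in $L_\lambda$, which destroys the symmetry that makes the usual Kolmogorov-flow multiplier argument work on $w$ alone: direct pairing of $F$ with $w$ produces a non-self-adjoint cross term $\langle\sin y\cdot(\alpha^2-\partial_y^2)^{-1}w,\,w\rangle$ with no useful sign. The device of testing against $g$ restores the symmetry at the price of reducing the coercivity constant from $1$ to $1-\alpha^{-2}$; the subsequent bookkeeping that propagates this constant through both the viscous step and the critical-layer split without losing further powers of $(1-\alpha^{-2})$ is the main technical point one has to verify, and is the reason the assumption $\alpha^2>1$ enters quantitatively into the final bound.
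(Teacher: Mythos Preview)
The paper does not prove this lemma; it is quoted verbatim from~\cite{LWZ.2020} as a known resolvent estimate for the Kolmogorov-type operator, and is only used in this paper (via Proposition~\ref{prop:3.1}) after a translation in $y$. Your proposal is therefore not a comparison with the paper's own argument but a reconstruction of the proof in the cited reference, and the skeleton you describe---test against $g=w-(\alpha^2-\partial_y^2)^{-1}w$ so that the nonlocal inviscid term collapses to the single real form $\int\sin y\,|g|^2\,dy$, extract viscous control from the real part, and balance a critical-layer split at scale $\delta\sim(\nu/|\gamma|)^{1/2}$---is indeed the strategy of~\cite{LWZ.2020}.

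Two small corrections to your sketch. First, in the imaginary-part identity the bracket should read $\int\sin y\,|g|^2\,dy-\lambda\|w\|_\ast^2$, not $\int(\sin y-\lambda)|g|^2\,dy-\lambda\|w\|_\ast^2$: the term $-\lambda w$ pairs with $g$ to give $-\lambda\langle w,g\rangle=-\lambda\|w\|_\ast^2$ directly, with no additional $-\lambda\|g\|_{L^2}^2$. Second, the assertion that the components of $\Omega_\delta=\{|\sin y-\lambda|\le\delta\}$ have length $\mathcal{O}(\delta)$ is only true away from the extrema $\lambda=\pm1$; when $|\lambda|$ is close to $1$ the width degenerates to $\mathcal{O}(\delta^{1/2})$, and the argument in~\cite{LWZ.2020} handles this regime separately. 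These are bookkeeping issues rather than genuine gaps; the conceptual outline is correct.
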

In fact, the operators $\mathcal{H}$ and $\mathcal{H}_{k_1,k_2}$ have been studied in \cite{FBW.2022}, the following lemma introduces the pseudospectral bound of $\mathcal{H}_{k_1,k_2}$ and semigroup estimate of $\mathcal{H}$.
\begin{lemma}\label{lem:2.3}
The operation $\mathcal{H}_{k_1,k_2}$ is m-accretive, if $\nu |k|^{-1}\ll1$, then there exists a positive constant $c_1$ independent of $\nu$ and $|k|$, such that
$$
\Psi(\mathcal{H}_{k_1,k_2})\geq c_1|k|^{1/2}\nu^{1/2},
$$
where $\Psi(\mathcal{H}_{k_1,k_2})$ is defined by \eqref{eq:2.3}. Moreover, the $e^{-t\mathcal{H}}$ is a semigroup with the operator $-\mathcal{H}$ as generator for any $0<c\leq c_1$, then
$$
\left\|e^{-t\mathcal{H}}P_{\neq}\right\|_{L^2\rightarrow L^2}\lesssim e^{-c\nu^{1/2}t-\nu t},
$$
where $P_{\neq}$ is defined in \eqref{eq:1.7}.
\end{lemma}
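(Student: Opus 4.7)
The plan is to establish three things in order: m-accretivity of $\mathcal{H}_{k_1,k_2}$, the pseudospectral lower bound, and the semigroup decay.

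First, I would verify m-accretivity by a soft perturbation argument. Write $\mathcal{H}_{k_1,k_2} = A + iB$ with $A = -\nu m_0^2 \partial_y^2$ a non-negative self-adjoint operator on $L^2(\mathbb{T}_{2\pi\delta})$ of domain $H^2$, and $B = |k|\delta^2\sin(y+\alpha_k)$ a bounded real multiplication operator, so that $iB$ is bounded and skew-adjoint. Accretivity follows from $\mathrm{Re}\langle (A+iB)w,w\rangle = \langle Aw,w\rangle \geq 0$, and the range condition follows from bounded perturbation of the m-accretive operator $A$.

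The heart of the lemma is the pseudospectral estimate
$$\bigl\|-\nu m_0^2\partial_y^2 w + i(|k|\delta^2\sin(y+\alpha_k)-\lambda)w\bigr\|_{L^2}\geq c_1|k|^{1/2}\nu^{1/2}\|w\|_{L^2}$$
for all $w\in H^2$ and $\lambda\in\mathbb{R}$. Writing $a_\lambda(y)=|k|\delta^2\sin(y+\alpha_k)-\lambda$ and $g$ for the left-hand side, the naive real/imaginary splitting against $w$ yields only $\nu m_0^2\|\partial_y w\|_{L^2}^2\lesssim \|g\|_{L^2}\|w\|_{L^2}$ and $|\int a_\lambda|w|^2|\lesssim\|g\|_{L^2}\|w\|_{L^2}$, which is too weak. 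I would split into cases: if $|\lambda|\geq 2|k|\delta^2$ then $|a_\lambda|\gtrsim|k|$ uniformly and the imaginary part alone gives a bound of order $|k|$, stronger than required; if $|\lambda|\leq 2|k|\delta^2$, the zeros of $a_\lambda$ are isolated, and in the non-degenerate case $|a_\lambda'|\sim|k|$ near each zero, so a localization/commutator estimate obtained by pairing $\langle g,a_\lambda w\rangle$, integrating by parts, and using the real-part control on $\|\partial_y w\|_{L^2}$ closes the bound at the Orr-Sommerfeld scale $(\nu m_0^2\cdot|k|\delta^2)^{1/2}=|k|^{1/2}\nu^{1/2}$ (recall $m_0^2\delta^2=1$). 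The assumption $\nu|k|^{-1}\ll 1$ ensures the viscous boundary layer is well separated from the $2\pi$ period. The main obstacle is the degenerate cases $\lambda=\pm|k|\delta^2$ where $a_\lambda'$ vanishes exactly where $a_\lambda$ does, and the local model degenerates to a complex harmonic oscillator $-\nu m_0^2\partial_y^2+i\beta(y-y_0)^2$; one still recovers the $\nu^{1/2}|k|^{1/2}$ scaling by exploiting $|a_\lambda''|\gtrsim|k|$. This is precisely the analysis carried out in the authors' earlier work \cite{FBW.2022}, to which I would defer for the technical details.

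Finally, the semigroup bound is a direct application of Lemma \ref{lem:2.1}. Fourier-decomposing in $(x_1,x_2)$ and rescaling $y\mapsto m_0 y$ writes $\mathcal{H}$ on the $(k_1,k_2)$-fiber as $\nu(k_1^2+k_2^2)I+\mathcal{H}_{k_1,k_2}$. Because $P_{\neq}$ annihilates the zero mode, every active fiber satisfies $k_1^2+k_2^2\geq 1$ and hence $\nu|k|^2\geq\nu$. Lemma \ref{lem:2.1} applied to $\mathcal{H}_{k_1,k_2}$ yields $\|e^{-t\mathcal{H}_{k_1,k_2}}\|_{L^2\to L^2}\leq e^{\pi/2}e^{-c_1|k|^{1/2}\nu^{1/2}t}$; multiplying by the scalar factor $e^{-\nu|k|^2 t}\leq e^{-\nu t}$ and summing over $(k_1,k_2)\neq 0$ via Plancherel gives
$$\|e^{-t\mathcal{H}}P_{\neq}f\|_{L^2}\lesssim e^{-\nu t - c\nu^{1/2}t}\|f\|_{L^2}$$
for any $0<c\leq c_1$, as claimed.
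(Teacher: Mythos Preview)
Your proposal is correct and aligns with the paper's treatment: the paper does not prove Lemma~\ref{lem:2.3} at all but simply cites \cite{FBW.2022} for both the pseudospectral bound and the semigroup estimate, and your sketch follows exactly that methodology (m-accretivity by bounded skew-adjoint perturbation, resolvent estimate via the case split on $|\lambda|$ with the critical-point degeneracy handled separately, semigroup decay from Lemma~\ref{lem:2.1} fiberwise in $(k_1,k_2)$). One small slip: after the rescaling that produces the factor $m_0^2$ in front of $\partial_y^2$, the operator $\mathcal{H}_{k_1,k_2}$ acts on $L^2(\mathbb{T}_{2\pi})$, not $L^2(\mathbb{T}_{2\pi\delta})$; this does not affect the argument.
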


Similar to 3-D Kolmogorov flow (see \cite{LWZ.2020}), we can establish the following lemma, which is important for the decay estimate of the vorticity, it is
\begin{lemma}\label{lem:2.4}
Let $u$ and $F$ satisfy
$$
(\partial_t+\mathcal{H})u=\div F.
$$
If $P_0u=P_0F=0$, then we have
$$
\big\|u\big\|^2_{X_{ed}}\lesssim \big\|u(0)\big\|^2_{L^2}+\nu^{-1}\left\|e^{\epsilon\nu^{1/2}t}F\right\|^2_{L^2L^2},
$$
where $\mathcal{H}$ is defined in \eqref{eq:1.12} and $\|\cdot\|_{X_{ed}}$ is defined in \eqref{eq:1.15}.
\end{lemma}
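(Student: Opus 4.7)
The plan is to combine a weighted parabolic energy estimate, which delivers the $L^\infty L^2$ and $\nu \cdot L^2 H^1$ components of $\|u\|_{X_{ed}}$, with the enhanced-dissipation semigroup bound from Lemma \ref{lem:2.3}, which delivers the $\nu^{1/2}\cdot L^2 L^2$ component and, crucially, lets us absorb a spurious term coming from the energy estimate.

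For the first step I would multiply $(\partial_t+\mathcal{H})u=\div F$ by $e^{2\epsilon\nu^{1/2}t}u$ and integrate over $\mathbb{T}^3$. Since the transport piece $\delta^2\sin(m_0 y)\partial_{x_1}+\delta^2\cos(m_0 y)\partial_{x_2}$ of $\mathcal{H}$ is skew-symmetric on $L^2$, $\langle\mathcal{H}u,u\rangle$ collapses to $\nu\|\nabla u\|_{L^2}^{2}$; integrating the source by parts and applying Young's inequality produces
\begin{equation*}
\frac{d}{dt}\!\left(e^{2\epsilon\nu^{1/2}t}\|u\|_{L^2}^{2}\right)+\nu e^{2\epsilon\nu^{1/2}t}\|\nabla u\|_{L^2}^{2}\;\leq\;2\epsilon\nu^{1/2} e^{2\epsilon\nu^{1/2}t}\|u\|_{L^2}^{2}+\nu^{-1}e^{2\epsilon\nu^{1/2}t}\|F\|_{L^2}^{2}.
\end{equation*}
Integrating in $t$ and taking a supremum yields
\begin{equation*}
\|e^{\epsilon\nu^{1/2}t}u\|_{L^\infty L^2}^{2}+\nu\|e^{\epsilon\nu^{1/2}t}\nabla u\|_{L^2L^2}^{2}\;\lesssim\;\|u_0\|_{L^2}^{2}+\epsilon\nu^{1/2}\|e^{\epsilon\nu^{1/2}t}u\|_{L^2L^2}^{2}+\nu^{-1}\|e^{\epsilon\nu^{1/2}t}F\|_{L^2L^2}^{2},
\end{equation*}
so only the middle term on the right remains to be controlled.

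For the second step I would estimate the weighted $L^2L^2$ norm by Duhamel's formula combined with Lemma \ref{lem:2.3}. The hypotheses $P_0u_0=P_0F=0$, together with the easy fact that both $\mathcal{H}$ and $\div$ preserve the non-zero mode projection $P_{\neq}$, give
\begin{equation*}
u(t)=e^{-t\mathcal{H}}P_{\neq}u_0+\int_0^t e^{-(t-s)\mathcal{H}}P_{\neq}\div F(s)\,ds.
\end{equation*}
For the homogeneous piece, Lemma \ref{lem:2.3} with $\epsilon<c$ gives $\|e^{\epsilon\nu^{1/2}t}e^{-t\mathcal{H}}P_{\neq}u_0\|_{L^2}\lesssim e^{-(c-\epsilon)\nu^{1/2}t}\|u_0\|_{L^2}$, whose square integrates in $t$ to $\lesssim \nu^{-1/2}\|u_0\|_{L^2}^{2}$, so after multiplication by $\nu^{1/2}$ it is bounded by $\|u_0\|_{L^2}^{2}$. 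For the Duhamel piece, the one-derivative loss in $\div F$ is recovered from the $-\nu\Delta$ part of $\mathcal{H}$ via the smoothing-plus-decay bound
\begin{equation*}
\bigl\|e^{-\tau\mathcal{H}}P_{\neq}\div g\bigr\|_{L^2}\;\lesssim\;(\nu\tau)^{-1/2}e^{-c\nu^{1/2}\tau}\|g\|_{L^2},
\end{equation*}
which follows by splitting $e^{-\tau\mathcal{H}}=e^{-\tau\mathcal{H}/2}\,e^{-\tau\mathcal{H}/2}$, applying the standard $L^2\!\to\!H^1$ heat smoothing estimate to the first factor and Lemma \ref{lem:2.3} to the second, then dualizing. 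Young's inequality in time with kernel $k(\tau)=(\nu\tau)^{-1/2}e^{-(c-\epsilon)\nu^{1/2}\tau}$, whose $L^1_t$ norm is $\lesssim \nu^{-3/4}$, then produces
\begin{equation*}
\nu^{1/2}\|e^{\epsilon\nu^{1/2}t}u\|_{L^2L^2}^{2}\;\lesssim\;\|u_0\|_{L^2}^{2}+\nu^{-1}\|e^{\epsilon\nu^{1/2}t}F\|_{L^2L^2}^{2}.
\end{equation*}
Feeding this back into the energy inequality absorbs the troublesome $\epsilon\nu^{1/2}\|e^{\epsilon\nu^{1/2}t}u\|_{L^2L^2}^{2}$ term and completes the proof.

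The main obstacle is reconciling the derivative loss in $\div F$ with the enhanced-dissipation rate $\nu^{1/2}$. Short-time parabolic smoothing from $-\nu\Delta$ supplies a $(\nu\tau)^{-1/2}$ gain, while Lemma \ref{lem:2.3} supplies the long-time exponential decay; the product kernel is integrable in $\tau$ on $(0,\infty)$ with mass of size $\nu^{-3/4}$, which after the $\nu^{1/2}$ weight and the squaring is exactly the $\nu^{-1}\|F\|^{2}$ bound demanded by the statement. An alternative, more bookkeeping-heavy, route is to decompose $u$ in Fourier modes $(k_1,k_2)\neq(0,0)$ and apply Lemma \ref{lem:2.3} mode-by-mode to $\nu|k|^{2}+\mathcal{H}_{k_1,k_2}$, using that the tangential components of $\div F$ contribute only the multiplicative factor $|k|$ that is already dominated by the semigroup.
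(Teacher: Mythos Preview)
Your overall strategy---a weighted energy estimate leaving over the term $\epsilon\nu^{1/2}\|e^{\epsilon\nu^{1/2}t}u\|_{L^2L^2}^2$, followed by an independent bound on that term---is exactly the scheme the paper uses (see the proof of Proposition~\ref{prop:4.1}, which is the template the paper cites for Lemma~\ref{lem:2.4}). The difficulty is in your second step, namely the smoothing-plus-decay bound
\[
\bigl\|e^{-\tau\mathcal{H}}P_{\neq}\div g\bigr\|_{L^2}\;\lesssim\;(\nu\tau)^{-1/2}e^{-c\nu^{1/2}\tau}\|g\|_{L^2}.
\]
The factor $(\nu\tau)^{-1/2}$ is the $L^2\!\to\!H^1$ rate for the pure heat semigroup $e^{\nu\tau\Delta}$, but $\mathcal{H}$ is heat plus transport, and the commutator $[\partial_y,\mathcal{H}]=\delta\big(\cos(m_0 y)\partial_{x_1}-\sin(m_0 y)\partial_{x_2}\big)$ is of order one. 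The standard Lyapunov argument (with $\Phi=\|u\|^2+c_1\nu t\|\nabla u\|^2$) therefore gives $\|\nabla e^{-\tau\mathcal{H}}P_{\neq}\|_{L^2\to L^2}\lesssim(\nu\tau)^{-1/2}$ only for $\tau\lesssim1$; for $\tau\gtrsim1$ one gets at best $\nu^{-1/2}e^{-c\nu^{1/2}\tau}$ by smoothing on the last unit interval and applying Lemma~\ref{lem:2.3} before that. The resulting kernel has $\|k\|_{L^1_\tau}\sim\nu^{-1}$, not $\nu^{-3/4}$, and Young's inequality then delivers $\nu^{1/2}\|e^{\epsilon\nu^{1/2}t}u\|_{L^2L^2}^2\lesssim\nu^{-3/2}\|e^{\epsilon\nu^{1/2}t}F\|_{L^2L^2}^2$, which is off by $\nu^{-1/2}$ from the claim. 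Your brief justification (``standard $L^2\to H^1$ heat smoothing'') does not address this; the tangential components $\partial_{x_1},\partial_{x_2}$ of $\div$ are indeed fine by the Fourier-multiplier argument, but the $\partial_y$ component is not.

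The paper avoids pointwise smoothing entirely. It first proves, by a plain unweighted energy estimate, the bound
\[
\Bigl\|\int_0^t e^{-(t-s)\mathcal{H}}\div F(s)\,ds\Bigr\|_{L^2}^2\;\lesssim\;\nu^{-1}\int_0^t\|F(s)\|_{L^2}^2\,ds,
\]
which already carries the correct factor $\nu^{-1}$ (this is the analogue of \eqref{eq:4.12}). It then slices time into intervals of length $\tau_0=\nu^{-1/2}$, applies this energy bound on each slice, and uses only the $L^2\!\to\!L^2$ decay of Lemma~\ref{lem:2.3} to propagate between slices. Summing the resulting geometric series yields the weighted $L^2L^2$ bound with the right power of $\nu$ (the analogue of \eqref{eq:4.15}), which is then fed back into the weighted energy inequality exactly as in your first step.
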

Next, we introduce following estimate (see \cite{LWZ.2020}), it will be used in nonlinear estimate.
\begin{lemma}\label{lem:2.5}
For $i\in\{1,2\}$, it holds that
$$
\big\|f_1\partial_{x_i}f_2\big\|_{L^2}\lesssim \left(\big\|\partial_{x_i}f_1\big\|_{L^2}+\big\|f_1\big\|_{L^2}\right)\big\|\Delta f_2\big\|_{L^2}.
$$
\end{lemma}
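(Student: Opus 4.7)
The strategy is to exploit the distinguished $x_i$-direction in the right-hand side through an anisotropic Hölder inequality combined with the one-dimensional Sobolev embedding $W^{1,2}(\mathbb{T})\hookrightarrow L^\infty(\mathbb{T})$. Writing $x=(x_i,\hat{x}_i)$ with $\hat{x}_i=(x_j,y)$ denoting the two remaining coordinates, I would begin from the pointwise estimate $\int_{x_i}|f_1|^2|\partial_{x_i}f_2|^2\,dx_i\leq \|f_1(\cdot,\hat{x}_i)\|_{L^\infty_{x_i}}^2\|\partial_{x_i}f_2(\cdot,\hat{x}_i)\|_{L^2_{x_i}}^2$ and apply the $1$-D Sobolev embedding in $x_i$ to obtain $\|f_1(\cdot,\hat{x}_i)\|_{L^\infty_{x_i}}^2\lesssim \|f_1(\cdot,\hat{x}_i)\|_{L^2_{x_i}}^2+\|\partial_{x_i}f_1(\cdot,\hat{x}_i)\|_{L^2_{x_i}}^2$.

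Integrating in $\hat{x}_i$ and splitting by Hölder ($L^1_{\hat{x}_i}\times L^\infty_{\hat{x}_i}$) to pull out the $\hat{x}_i$-sup of the $\partial_{x_i}f_2$ factor yields
\begin{equation*}
\|f_1\partial_{x_i}f_2\|_{L^2}^2\lesssim \bigl(\|f_1\|_{L^2}^2+\|\partial_{x_i}f_1\|_{L^2}^2\bigr)\,\sup_{\hat{x}_i}\|\partial_{x_i}f_2(\cdot,\hat{x}_i)\|_{L^2_{x_i}}^2.
\end{equation*}
It then remains to control the sup by $\|\Delta f_2\|_{L^2}^2$. Setting $G(\hat{x}_i):=\|\partial_{x_i}f_2(\cdot,\hat{x}_i)\|_{L^2_{x_i}}$, Cauchy-Schwarz gives $|\nabla_{\hat{x}_i}G|\leq \|\nabla_{\hat{x}_i}\partial_{x_i}f_2(\cdot,\hat{x}_i)\|_{L^2_{x_i}}$, so that $\|G\|_{H^1(\mathbb{T}^2)}^2\lesssim \|\partial_{x_i}f_2\|_{L^2}^2+\|\nabla_{\hat{x}_i}\partial_{x_i}f_2\|_{L^2}^2\lesssim \|\Delta f_2\|_{L^2}^2$, using that $\|\Delta f_2\|_{L^2}$ dominates every second-order derivative of $f_2$ in $L^2$ by Plancherel. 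A two-dimensional Sobolev/Gagliardo-Nirenberg argument then converts this into $\|G\|_{L^\infty(\mathbb{T}^2)}\lesssim \|\Delta f_2\|_{L^2}$.

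The main obstacle is this last step: since $H^1(\mathbb{T}^2)$ does not embed into $L^\infty(\mathbb{T}^2)$ at the critical Sobolev index, the sup estimate on $G$ has to be extracted by a careful anisotropic Gagliardo-Nirenberg argument that exploits the full strength of $\|\Delta f_2\|_{L^2}$ (controlling every mixed second-order derivative of $f_2$) rather than merely the $H^1$-bound on $G$; this is the content of the analogous inequality in \cite{LWZ.2020}. Combining all the pieces yields $\|f_1\partial_{x_i}f_2\|_{L^2}\lesssim (\|f_1\|_{L^2}+\|\partial_{x_i}f_1\|_{L^2})\|\Delta f_2\|_{L^2}$ as claimed.
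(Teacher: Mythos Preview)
The paper does not prove Lemma~\ref{lem:2.5}; it simply records the inequality and refers to \cite{LWZ.2020}. So there is no ``paper's own proof'' to compare against, and any argument you supply already goes beyond what the paper does.

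Your anisotropic decomposition is the natural one and is correct up to the point you yourself flag: after taking $L^\infty_{x_i}$ on $f_1$ via the one-dimensional Sobolev embedding, you are forced to control $\|G\|_{L^\infty_{\hat x_i}}$ with $G(\hat x_i)=\|\partial_{x_i}f_2(\cdot,\hat x_i)\|_{L^2_{x_i}}$, and you correctly note that the bound $\|G\|_{H^1(\mathbb T^2)}\lesssim\|\Delta f_2\|_{L^2}$ alone is insufficient because $H^1(\mathbb T^2)\not\hookrightarrow L^\infty$. Where your write-up falls short is that you do not actually close this gap; you invoke ``a careful anisotropic Gagliardo--Nirenberg argument'' and then point to \cite{LWZ.2020}, which is precisely what the paper itself does.

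The missing step can in fact be carried out directly, and it is worth knowing how. Expand $f_2=\sum_{n}b_n(\hat x_i)e^{inx_i}$ in the $x_i$-variable, so that $G(\hat x_i)^2=2\pi\sum_n n^2|b_n(\hat x_i)|^2$ and
\[
\|\Delta f_2\|_{L^2}^2
=2\pi\sum_n\Big(n^4\|b_n\|_{L^2_{\hat x_i}}^2+2n^2\|\nabla_{\hat x_i}b_n\|_{L^2_{\hat x_i}}^2+\|\Delta_{\hat x_i}b_n\|_{L^2_{\hat x_i}}^2\Big).
\]
For each fixed $n\neq0$ the two-dimensional embedding $H^{1+\epsilon}(\mathbb T^2)\hookrightarrow L^\infty$ applied with weight $(n^2+|m|^2)$ gives $\big(\sum_m|\hat b_n(m)|\big)^2\lesssim n^{-2\epsilon}\sum_m(n^2+|m|^2)^{1+\epsilon}|\hat b_n(m)|^2$; summing $n^2|b_n(\hat x_i)|^2\leq n^2\big(\sum_m|\hat b_n(m)|\big)^2$ over $n$ and using $n^{2-2\epsilon}(n^2+|m|^2)^{1+\epsilon}\leq (n^2+|m|^2)^2$ yields $\|G\|_{L^\infty_{\hat x_i}}^2\lesssim\|\Delta f_2\|_{L^2}^2$. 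This is exactly the ``full strength of $\|\Delta f_2\|_{L^2}$'' you alluded to: one must use both the $n^4\|b_n\|_{L^2}^2$ and the $\|\Delta_{\hat x_i}b_n\|_{L^2}^2$ pieces together, not merely the $H^1$ control of $G$. With this in hand, your argument is complete.
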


\vskip .05in

\section{Decay estimate for linearized Navier-Stokes equation}\label{sec.3}

In this section, we will study the enhanced dissipation decay for linearized Navier-Stokes equation around planar helical flow, which is very important for considering the nonlinear stability. First, we need to establish the resolvent estimate of linearized operator.
\subsection{Resolvent estimate of linearized operator} In this paper, we consider the linearized Navier-Stokes equation around planar helical flow, we need to establish the resolvent estimate of linearized operator. First, we study the resolvent estimate of following linear operator
\begin{equation}\label{eq:3.1}
L_\lambda w=i\frac{\gamma}{\nu}\big[(\sin(y+\alpha_0)-\lambda)w+\sin (y+\alpha_0)\varphi\big]-\nu\partial^2_yw,\ \ \ y\in \mathbb{T}_{2\pi},
\end{equation}
where $(\partial^2_y-\alpha^2)\varphi=w, \gamma,\lambda\in \mathbb{R},\nu>0$, $|\alpha|>1$ and $\alpha_0$ is arbitrary. In \cite{LWZ.2020}, Li et al. considered the resolvent estimate of operator $L_\lambda$ in the case of $\alpha_0=0$,
we only need to modify simply to obtain the resolvent estimate of operator $L_\lambda$ in this paper.
\begin{proposition}\label{prop:3.1}
Let $0<\nu\leq1,|\gamma|\gg \nu^2$, $\lambda\in \mathbb{R}$ and $|\alpha|>1$, there exists a constant $C>0$ independent of $\nu,\gamma,\lambda$, such that
$$
\left\|L_\lambda w\right\|_{L^2}\geq C|\gamma|^{1/2}\left(1-\alpha^{-2}\right)\left\|w\right\|_{L^2},\ \ \ \forall~\alpha_0\in [0,2\pi),
$$
where the operator $L_\lambda$ is defined in \eqref{eq:3.1}.
\end{proposition}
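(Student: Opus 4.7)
The statement is essentially a translated version of Lemma \ref{lem:2.2}, so the plan is to reduce it to that lemma by a change of variables in the periodic coordinate. The phase shift $\alpha_0$ appears only through $\sin(y+\alpha_0)$, and the two remaining structural pieces of $L_\lambda$—the Laplacian $\partial_y^2$ and the Poisson-type relation $(\partial_y^2-\alpha^2)\varphi=w$—are both translation invariant on $\mathbb{T}_{2\pi}$, so the reduction should be immediate.

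Concretely, I would introduce $\tilde y = y+\alpha_0$ on $\mathbb{T}_{2\pi}$ and set $\tilde w(\tilde y)=w(\tilde y-\alpha_0)$, $\tilde\varphi(\tilde y)=\varphi(\tilde y-\alpha_0)$. Then $(\partial_{\tilde y}^2-\alpha^2)\tilde\varphi=\tilde w$, and
\[
(L_\lambda w)(\tilde y-\alpha_0)=i\frac{\gamma}{\nu}\bigl[(\sin \tilde y-\lambda)\tilde w+\sin \tilde y\,\tilde\varphi\bigr]-\nu\partial_{\tilde y}^2\tilde w,
\]
which is exactly the operator appearing in Lemma \ref{lem:2.2} applied to $\tilde w$. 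Since the $L^2(\mathbb{T}_{2\pi})$ norm is invariant under the shift $y\mapsto y+\alpha_0$, one has $\|L_\lambda w\|_{L^2}=\|\tilde L_\lambda\tilde w\|_{L^2}$ and $\|w\|_{L^2}=\|\tilde w\|_{L^2}$. Applying Lemma \ref{lem:2.2} to $\tilde w$ with the same parameters $\nu,\gamma,\lambda,\alpha$ immediately yields
\[
\|L_\lambda w\|_{L^2}\geq C|\gamma|^{1/2}\bigl(1-\alpha^{-2}\bigr)\|w\|_{L^2},
\]
with the same constant $C$ as in Lemma \ref{lem:2.2}.

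There is no genuine obstacle here: the only thing worth double-checking is that the substitution is compatible with periodicity (it is, because $\mathbb{T}_{2\pi}$ is a group under translation) and that the Poisson inversion used to define $\varphi$ commutes with the translation (it does, since $|\alpha|>1$ ensures $\partial_y^2-\alpha^2$ is invertible on all Fourier modes and the symbol depends only on the frequency). So the proof is essentially a one-line change of variables, and the proposition is a direct corollary of Lemma \ref{lem:2.2}.
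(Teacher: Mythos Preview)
Your proposal is correct and takes essentially the same approach as the paper: both reduce to Lemma~\ref{lem:2.2} via the translation $y\mapsto y+\alpha_0$ on $\mathbb{T}_{2\pi}$, using that $\partial_y^2$ and the relation $(\partial_y^2-\alpha^2)\varphi=w$ are translation invariant and that the $L^2$ norm is preserved under the shift.
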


\begin{proof}
For any $\alpha_0\in [0,2\pi)$, we define the translation operator $\mathcal{T}_{\alpha_0}$ as follows
$$
\mathcal{T}_{\alpha_0}f(y)=f(y-\alpha_0), \ \ \ f(y)\in L^2(\mathbb{T}_{2\pi}).
$$
Applying the operator $\mathcal{T}_{\alpha_0}$ to the $L_\lambda\omega$ and denoting
$$
\widetilde{w}(y)=w(y-\alpha_0),\ \ \ \widetilde{\varphi}(y)=\varphi(y-\alpha_0),
$$
then we have
\begin{equation}\label{eq:3.2}
\mathcal{T}_{\alpha_0}L_\lambda w=i\frac{\gamma}{\nu}\big[(\sin y-\lambda)\widetilde{w}+\sin y\widetilde{\varphi}\big]-\nu\partial^2_y\widetilde{w},\ \ \ y\in \mathbb{T}_{2\pi},
\end{equation}
where $(\partial^2_y-\alpha^2)\widetilde{\varphi}=\widetilde{w}$. Since $\|\mathcal{T}_{\alpha_0} f\|_{L^2}=\| f\|_{L^2}$, combining Lemma \ref{lem:2.2} and \eqref{eq:3.2}, one has
$$
\begin{aligned}
\left\|L_\lambda w\right\|_{L^2}
&=\left\|\mathcal{T}_{\alpha_0}L_\lambda w\right\|_{L^2}\\
&\geq\left\|i\frac{\gamma}{\nu}\big[(\sin y-\lambda)\widetilde{w}+\sin y\widetilde{\varphi}\big]-\nu\partial^2_y\widetilde{w}\right\|_{L^2}\\
&\geq C|\gamma|^{1/2}\left(1-\alpha^{-2}\right)\left\|\widetilde{w}\right\|_{L^2}\\
&=C|\gamma|^{1/2}\left(1-\alpha^{-2}\right)\left\|w\right\|_{L^2}.
\end{aligned}
$$
This completes the proof of Proposition \ref{prop:3.1}.
\end{proof}

Next, we consider the linearized operators $\mathcal{H}$ and $\mathcal{L}$, by scaling and taking the Fourier transform in $x_1$ and $x_2$, the linearized operators are reduced to
$$
\begin{aligned}
\widetilde{\mathcal{L}}_{k_1,k_2}&=i\delta^2k_1\sin y+i\delta^2k_2\cos y-\nu(-|k|^2+m_0^2\partial^2_{y})\\
 &\ \ \ +ik_1\sin y(-|k|^2+m_0^2\partial^2_{y})^{-1}+ik_2\cos y(-|k|^2+m_0^2\partial^2_{y})^{-1}\\
 &=i|k|\delta^2\left(\frac{k_1}{|k|}\sin y+\frac{k_2}{|k|}\cos y\right)-\nu(-|k|^2+m_0^2\partial^2_{y})\\
 &\ \ \ +i|k|\left(\frac{k_1}{|k|}\sin y+\frac{k_2}{|k|}\cos y\right)(-|k|^2+m_0^2\partial^2_{y})^{-1}\\
 &=i|k|\delta^2\sin (y+\alpha_k)-\nu(-|k|^2+m_0^2\partial^2_{y})+i|k|\sin (y+\alpha_k)(-|k|^2+m_0^2\partial^2_{y})^{-1}\\
 &=i|k|\delta^2\sin (y+\alpha_k)-\nu(-|k|^2+m_0^2\partial^2_{y})+i|k|\delta^2\sin (y+\alpha_k)(-|k|^2\delta^2+\delta^2m_0^2\partial^2_{y})^{-1}\\
 &=i|k|\delta^2\sin (y+\alpha_k)-\nu(-|k|^2+m_0^2\partial^2_{y})+i|k|\delta^2\sin (y+\alpha_k)(-\alpha^2+\partial^2_{y})^{-1}\\
 &=\nu|k|^2-\nu m_0^2\partial^2_{y}+i|k|\delta^2\sin (y+\alpha_k)\left(1-(\alpha^2-\partial^2_{y})^{-1}\right),
\end{aligned}
$$
and similarly, one has
$$
\begin{aligned}
\widetilde{\mathcal{H}}_{k_1,k_2}&=i\delta^2k_1\sin y+i\delta^2k_2\cos y-\nu(-|k|^2+m_0^2\partial^2_{y})\\
 &=i|k|\delta^2\left(\frac{k_1}{|k|}\sin y+\frac{k_2}{|k|}\cos y\right)-\nu(-|k|^2+m_0^2\partial^2_{y})\\
 &=\nu|k|^2-\nu m_0^2\partial^2_{y}+i|k|\delta^2\sin(y+\alpha_k),
\end{aligned}
$$
where
$$
k=(k_1,k_2)\neq{\bf 0},\ \ |k|=\sqrt{k_1^2+k_2^2},\ \ \alpha^2=|k|^2\delta^2, \ \ m_0=\delta^{-1}, \ \ \cos \alpha_k=\frac{k_1}{|k|},\ \  \sin \alpha_k=\frac{k_2}{|k|}.
$$
Since the $\nu|k|^2$ is a constant and has damping effect in the equation, thus we only consider the following operators
\begin{equation}\label{eq:3.3}
\mathcal{L}_{k_1,k_2}=-\nu m_0^2\partial^2_{y}+i|k|\delta^2\sin (y+\alpha_k)\left(1-(\alpha^2-\partial^2_{y})^{-1}\right),
\end{equation}
and
\begin{equation}\label{eq:3.4}
\mathcal{H}_{k_1,k_2}=-\nu m_0^2\partial^2_{y}+i|k|\delta^2\sin(y+\alpha_k).
\end{equation}
In \cite{FBW.2022}, we studied the operators $\mathcal{H}$ and $\mathcal{H}_{k_1,k_2}$, the pseudospectral bound $\Psi(\mathcal{H}_{k_1,k_2})$ and the semigroup estimate of $\mathcal{H}$ were established, the details also see Lemma \ref{lem:2.3} in Section \ref{sec.2}. Thus, we only consider the operators $\mathcal{L}$ and $\mathcal{L}_{k_1,k_2}$ in Section \ref{sec.3}.

\vskip .05in

Similar argument with \cite{LWZ.2020}, we only need to check that the operator $\mathcal{L}_{k_1,k_2}$ is accretive, which implies that the operator is m-accretive. Since the $\mathcal{L}_{k_1,k_2}$ is not self-adjoint, we define a new inner product $\langle\cdot,\cdot\rangle_{\ast}$ as
$$
\left\langle u,w\right\rangle_{\ast}=\left\langle u,w-(\alpha^2-\partial^2_{y})^{-1}w\right\rangle.
$$
Taking $X=L^2(\mathbb{T}_{2\pi})$ with the norm $\|u\|_{\ast}=\langle u,u\rangle^{1/2}_{\ast}$. Obviously, the operator $\mathcal{L}_{k_1,k_2}$ is self-adjoint in new inner product and the domain $D(\mathcal{L}_{k_1,k_2})=H^2(\mathbb{T}_{2\pi})$. If $\alpha^2>1$, we can get
\begin{equation}\label{eq:3.5}
\left(1-\alpha^{-2}\right)\big\|u\big\|^2_{L^2}\leq\big\|u\big\|^2_{\ast}\leq\big\|u\big\|^2_{L^2},
\end{equation}
then the norm $\|\cdot\|_{\ast}$ is equivalent to the $L^2$ norm. And it is easy to check that
$$
\Re\left\langle\mathcal{L}_{k_1,k_2}u,u\right\rangle_{\ast}=-\nu m_0^2\left\langle\partial^2_yu,u\right\rangle_{\ast}
=\nu  m_0^2\left\langle\partial_yu,\partial_y u-\partial_y(\alpha^2-\partial^2_{y})^{-1}u\right\rangle=\nu m_0^2\big\|\partial_y u\big\|^2_{\ast}\geq0,
$$
thus the operator $\mathcal{L}_{k_1,k_2}$ is m-accretive. We define
$$
\Psi(\mathcal{L}_{k_1,k_2})=\inf\left\{\big\|(\mathcal{L}_{k_1,k_2}-i\lambda I)u\big\|_{\ast}: u\in D(\mathcal{L}_{k_1,k_2}), \lambda\in \mathbb{R}, \big\|u\big\|_{\ast}=1 \right\},
$$
the following lemma gives the estimate of pseudospectral bound to the $\mathcal{L}_{k_1,k_2}$.
\begin{lemma}\label{lem:3.2}
If $0<\nu\ll1$ and $\alpha^2>1$, then
$$
\Psi(\mathcal{L}_{k_1,k_2})\geq c_2|k|^{1/2}\nu^{1/2}(1-\alpha^{-2})^{3/2}.
$$
\end{lemma}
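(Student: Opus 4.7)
\textbf{Proof plan for Lemma \ref{lem:3.2}.} The plan is to reduce the shifted operator $\mathcal{L}_{k_1,k_2}-i\lambda I$ to the operator $L_{\lambda_0}$ of Proposition \ref{prop:3.1} by an algebraic identification of parameters, apply that resolvent estimate in $L^2$, and then convert the bound to the inner product $\langle\cdot,\cdot\rangle_*$ via the equivalence \eqref{eq:3.5}. A factor of $(1-\alpha^{-2})^{1/2}$ is lost in the conversion, and this is exactly what turns the $(1-\alpha^{-2})$ in Proposition \ref{prop:3.1} into the $(1-\alpha^{-2})^{3/2}$ in the statement.

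\textbf{Step 1 (Algebraic reduction).} For any $u\in D(\mathcal{L}_{k_1,k_2})$ and $\lambda\in\mathbb{R}$, write $(\alpha^2-\partial_y^2)^{-1}u=-\varphi$, so that $(\partial_y^2-\alpha^2)\varphi=u$. Then from \eqref{eq:3.3}
\[
(\mathcal{L}_{k_1,k_2}-i\lambda I)u=-\nu m_0^2\partial_y^2 u+i|k|\delta^2\bigl[(\sin(y+\alpha_k)-\lambda_0)u+\sin(y+\alpha_k)\varphi\bigr],
\]
where $\lambda_0=\lambda/(|k|\delta^2)$. Setting the effective viscosity $\tilde\nu:=\nu m_0^2=\nu/\delta^2$ and the parameter $\gamma:=|k|\delta^2\tilde\nu=\nu|k|$, this coincides exactly with the operator $L_{\lambda_0}$ of Proposition \ref{prop:3.1} with phase shift $\alpha_0=\alpha_k$.

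\textbf{Step 2 (Applying Proposition \ref{prop:3.1}).} I verify the hypotheses: $0<\tilde\nu\le 1$ holds since $\delta\ge 1$ and $\nu\le 1$; the smallness condition $|\gamma|\gg\tilde\nu^2$ reads $\nu|k|\gg \nu^2/\delta^4$, which follows from $0<\nu\ll 1$ together with $|k|\ge 1$ and $\delta>1$; and $|\alpha|>1$ is assumed. Proposition \ref{prop:3.1} therefore yields
\[
\bigl\|(\mathcal{L}_{k_1,k_2}-i\lambda I)u\bigr\|_{L^2}=\|L_{\lambda_0} u\|_{L^2}\ge C(\nu|k|)^{1/2}(1-\alpha^{-2})\|u\|_{L^2}.
\]

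\textbf{Step 3 (Converting to $\|\cdot\|_*$).} The two-sided bound \eqref{eq:3.5} gives $(1-\alpha^{-2})^{1/2}\|v\|_{L^2}\le\|v\|_*\le\|v\|_{L^2}$ for every $v\in L^2$. Applying the lower half to $v=(\mathcal{L}_{k_1,k_2}-i\lambda I)u$ and the upper half to $u$ itself produces
\[
\bigl\|(\mathcal{L}_{k_1,k_2}-i\lambda I)u\bigr\|_*\ge (1-\alpha^{-2})^{1/2}\bigl\|(\mathcal{L}_{k_1,k_2}-i\lambda I)u\bigr\|_{L^2}\ge C(\nu|k|)^{1/2}(1-\alpha^{-2})^{3/2}\|u\|_*.
\]
Taking the infimum over $\lambda\in\mathbb{R}$ and unit vectors $u$ in $\|\cdot\|_*$ gives the desired $\Psi(\mathcal{L}_{k_1,k_2})\ge c_2|k|^{1/2}\nu^{1/2}(1-\alpha^{-2})^{3/2}$.

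\textbf{Main obstacle.} There is no genuinely hard new computation here: all the serious analysis has been absorbed into Proposition \ref{prop:3.1}. The only point that requires care is the bookkeeping of the rescaling $(\tilde\nu,\gamma,\lambda_0)\mapsto(\nu,|k|,\lambda)$ and the sign convention implicit in writing $(\alpha^2-\partial_y^2)^{-1}u=-\varphi$ with $(\partial_y^2-\alpha^2)\varphi=u$, which must match the sign convention used by Proposition \ref{prop:3.1}. The second subtle point is recognising that the $\|\cdot\|_*$-to-$L^2$ conversion is asymmetric: one inequality in \eqref{eq:3.5} is free, but the other costs a factor $(1-\alpha^{-2})^{1/2}$, and this single factor accounts for the power $3/2$ rather than $1$ in the final bound.
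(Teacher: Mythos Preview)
Your proof is correct and follows essentially the same route as the paper: both reduce $\mathcal{L}_{k_1,k_2}-i\lambda I$ to the model operator of Proposition~\ref{prop:3.1} by a parameter identification, apply that resolvent estimate in $L^2$, and then pass to the $\|\cdot\|_*$ norm via \eqref{eq:3.5}, picking up the extra factor $(1-\alpha^{-2})^{1/2}$. The only cosmetic difference is that you absorb $m_0^2$ into an effective viscosity $\tilde\nu=\nu m_0^2$ with $\gamma=\nu|k|$, whereas the paper factors out $m_0^2$ explicitly and takes $\gamma=\nu|k|\delta^4$; the resulting bounds are the same up to the harmless $\delta$-dependent constant absorbed into $c_2$.
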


\begin{proof}
For any $\lambda\in \mathbb{R}$,  let $\gamma=\nu|k|\delta^4$, $\lambda_1=m_0^2\lambda/|k|$, one has
$$
\begin{aligned}
\mathcal{L}_{k_1,k_2}-i\lambda I&=-\nu m_0^2\partial^2_{y}+i|k|\delta^2\sin (y+\alpha_k)\left(1-(\alpha^2-\partial^2_{y})^{-1}\right)-i\lambda I\\
&=m_0^2\left(-\nu\partial^2_{y}+i|k|\delta^4\sin (y+\alpha_k)+i|k|\delta^4\sin (y+\alpha_k)(\partial^2_{y}-\alpha^2)^{-1}-i\lambda m_0^{-2}\right)\\
&=m_0^2\left(-\nu\partial^2_{y}+i\frac{\gamma}{\nu}\sin (y+\alpha_k)+i\frac{\gamma}{\nu}\sin (y+\alpha_k)(\partial^2_{y}-\alpha^2)^{-1}-i\frac{\gamma}{\nu}\lambda_1\right)\\
&=m_0^2\left(i\frac{\gamma}{\nu}\big[(\sin (y+\alpha_k)-\lambda_1)+\sin (y+\alpha_k)(\partial^2_{y}-\alpha^2)^{-1}\big]-\nu\partial^2_{y}\right),
\end{aligned}
$$
then we have
$$
\left(\mathcal{L}_{k_1,k_2}-i\lambda\right)w=m_0^2\left(i\frac{\gamma}{\nu}\left((\sin (y+\alpha_k)-\lambda_1)w+\sin (y+\alpha_k)\varphi\right)-\nu\partial^2_{y}w\right),
$$
where $\varphi=(\partial^2_y-\alpha^2)^{-1}w$. Combining Proposition \ref{prop:3.1} and \eqref{eq:3.5}, we imply that
$$
\begin{aligned}
\big\|(\mathcal{L}_{k_1,k_2}-i\lambda)w\big\|^2_{\ast}
&\geq \left(1-\alpha^{-2}\right)\big\|(\mathcal{L}_{k_1,k_2}-i\lambda)w\big\|^2_{L^2}\\
&=\left(1-\alpha^{-2}\right)m_0^2\left\|i\frac{\gamma}{\nu}\big[(\sin (y+\alpha_k)-\lambda_1)w+\sin (y+\alpha_k)\varphi\big]-\nu\partial^2_{y}w\right\|^2_{L^2}\\
&\geq\left(1-\alpha^{-2}\right)m_0^2C^2|\gamma|\left(1-\alpha^{-2}\right)^2\big\|w\big\|^2_{L^2}\\
&\geq \left(Cm_0\right)^2|\gamma|\left(1-\alpha^{-2}\right)^3\big\|w\big\|^2_{\ast},
\end{aligned}
$$
and there exists $c_2\leq C\delta$, such that
$$
\left(Cm_0\right)^2|\gamma|\left(1-\alpha^{-2}\right)^3\geq c_2^2|k|\nu\left(1-\alpha^{-2}\right)^3.
$$
Then we have
$$
\big\|(\mathcal{L}_{k_1,k_2}-i\lambda)w\big\|_{\ast}\geq c_2|k|^{1/2}\nu^{1/2}\left(1-\alpha^{-2}\right)^{3/2}\big\|w\big\|_{\ast}.
$$
This completes the proof of Lemma \ref{lem:3.2}.
\end{proof}

\vskip .05in

We obtain the semigroup estimate from pseudospectral bound, which based on the Gearhart-Pr\"{u}ss type lemma, see Lemma \ref{lem:2.1} and also \cite{GGN.2009,LWZ.2020} for details. The following result is a semigroup estimate of $\mathcal{L}$.

\begin{proposition}\label{prop:3.3}
Let $\delta>1$, if $0<\nu\ll1$, then there exists a constant $c>0$ independent of $\nu$, such that
\begin{equation}\label{eq:3.6}
\left\|e^{-t\mathcal{L}}P_{\neq}\right\|_{L^2\rightarrow L^2}\lesssim e^{-c\nu^{1/2}t-\nu t},
\end{equation}
where the operator $\mathcal{L}$ is defined in \eqref{eq:1.11} and $P_{\neq}$ is defined in \eqref{eq:1.7}. Moreover, for any $f\in L^2(\mathbb{T}^3)$ and $c'\in (0,c)$, there exists a constant $C(c,c')$ such that
\begin{equation}\label{eq:3.7}
\nu\int_{0}^{\infty}e^{2c'\nu^{1/2}s}\left\|(\nabla e^{-s\mathcal{L}}f)_{\neq}\right\|^2_{L^2}ds\leq C(c,c')\big\|f_{\neq}\big\|^2_{L^2},
\end{equation}
where $(\nabla e^{-t\mathcal{L}}f)_{\neq}$ is defined by \eqref{eq:1.7}.
\end{proposition}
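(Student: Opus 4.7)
The plan is to combine the pseudospectral bound of Lemma~\ref{lem:3.2} with the Gearhart--Pr\"uss-type estimate of Lemma~\ref{lem:2.1}, decoupling the semigroup mode-by-mode in the $(x_1,x_2)$ Fourier variables. Given $f \in L^2(\mathbb{T}^3)$, I will expand $f = \sum_{k \in \mathbb{Z}^2} \hat f_k(y)\, e^{i(k_1 x_1 + k_2 x_2)}$, so that $P_{\neq} f$ corresponds to the sum over $k \neq 0$. Since $\mathcal{L}$ in \eqref{eq:1.11} consists of multiplication in $y$, $x$-derivatives, and the Laplacian, it commutes with the Fourier projection onto each mode $k$, and after the same scaling in $y$ used in the excerpt it acts on mode $k$ as $\widetilde{\mathcal{L}}_{k_1,k_2} = \nu |k|^2 + \mathcal{L}_{k_1,k_2}$ with $\mathcal{L}_{k_1,k_2}$ given in \eqref{eq:3.3}. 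Hence $e^{-t\mathcal{L}}$ acts on the $k$-th mode as the commuting product $e^{-\nu|k|^2 t}\, e^{-t\mathcal{L}_{k_1,k_2}}$.

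Next I will extract uniformity in $k$. Because $k \neq 0$ forces $|k| \geq 1$ and $\delta > 1$ gives $\alpha^2 = |k|^2\delta^2 \geq \delta^2 > 1$, we have $(1-\alpha^{-2})^{3/2} \geq (1-\delta^{-2})^{3/2} =: \kappa > 0$, and Lemma~\ref{lem:3.2} therefore provides the uniform lower bound $\Psi(\mathcal{L}_{k_1,k_2}) \geq c_2 \kappa\, |k|^{1/2}\nu^{1/2} \geq c\,\nu^{1/2}$ with $c := c_2\kappa$ independent of both $\nu$ and $k$. Applying Lemma~\ref{lem:2.1} to the m-accretive operator $\mathcal{L}_{k_1,k_2}$ on $(L^2(\mathbb{T}_{2\pi}),\|\cdot\|_{\ast})$ and invoking the two-sided norm equivalence \eqref{eq:3.5} yields $\|e^{-t\mathcal{L}_{k_1,k_2}}\hat f_k\|_{L^2} \lesssim e^{-c\nu^{1/2}t}\|\hat f_k\|_{L^2}$ uniformly in $k$. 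Multiplying by $e^{-\nu|k|^2 t} \leq e^{-\nu t}$ and summing via Plancherel delivers \eqref{eq:3.6}.

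For the weighted gradient bound \eqref{eq:3.7}, I will run an energy identity in the equivalent inner product $\langle\cdot,\cdot\rangle_{\ast}$ separately on each mode. Setting $\hat u_k(t,y) = e^{-\nu|k|^2 t}(e^{-t\mathcal{L}_{k_1,k_2}}\hat f_k)(y)$, so that $\partial_t \hat u_k + (\nu|k|^2 + \mathcal{L}_{k_1,k_2})\hat u_k = 0$, I note that the multiplier part $i|k|\delta^2\sin(y+\alpha_k)\bigl(1-(\alpha^2-\partial_y^2)^{-1}\bigr)$ is skew-symmetric under $\langle\cdot,\cdot\rangle_\ast$ (this is exactly the computation used to show accretivity of $\mathcal{L}_{k_1,k_2}$ immediately before Lemma~\ref{lem:3.2}), while $-\nu m_0^2\partial_y^2$ contributes the positive dissipation $\nu m_0^2\|\partial_y \hat u_k\|_{\ast}^2$. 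Taking real parts produces the identity $\tfrac{d}{dt}\|\hat u_k\|_\ast^2 + 2\nu|k|^2\|\hat u_k\|_\ast^2 + 2\nu m_0^2\|\partial_y \hat u_k\|_\ast^2 = 0$. I will then multiply by $e^{2c'\nu^{1/2}t}$, integrate from $0$ to $\infty$, and use \eqref{eq:3.6} applied mode-wise to dominate the resulting right-hand side term $2c'\nu^{1/2}\int_0^\infty e^{2c'\nu^{1/2}t}\|\hat u_k\|_\ast^2\,dt$ by $\tfrac{Cc'}{c-c'}\|\hat f_k\|_{L^2}^2$; this is where the hypothesis $c'<c$ enters crucially to ensure convergence of the time integral. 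This yields $\nu\int_0^\infty e^{2c'\nu^{1/2}t}\bigl(|k|^2\|\hat u_k\|_{L^2}^2 + \|\partial_y \hat u_k\|_{L^2}^2\bigr)\,dt \leq C(c,c')\|\hat f_k\|_{L^2}^2$, and summing over $k \neq 0$ via Plancherel gives \eqref{eq:3.7}.

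The one real subtlety will be keeping the constant $c$ in \eqref{eq:3.6} independent of both $\nu$ \emph{and} $k$: this requires the uniform lower bound $(1-\alpha^{-2})^{3/2} \geq (1-\delta^{-2})^{3/2}$, which is exactly where the strict inequality $\delta > 1$ is consumed. In the borderline case $\delta = 1$, one has $\alpha^2 = 1$ at $|k|=1$ and the pseudospectral lower bound of Lemma~\ref{lem:3.2} degenerates; this is precisely the regime deferred to Appendix~\ref{sec.A}.
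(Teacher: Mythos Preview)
Your proposal is correct and follows essentially the same route as the paper. The paper organizes the argument by first isolating the mode-wise decay as Lemma~\ref{lem:3.4} and the weighted energy estimate as Lemma~\ref{lem:3.5} (the latter deferred to \cite{LWZ.2020}), then assembling them in the proof of Proposition~\ref{prop:3.3}; you carry out the same steps inline, and your choice to run the energy identity directly in the $\langle\cdot,\cdot\rangle_\ast$ inner product---where the nonlocal transport part is manifestly skew-adjoint---is exactly the computation the paper records just before Lemma~\ref{lem:3.2}, so your derivation of the weighted dissipation bound is a clean, self-contained version of what Lemma~\ref{lem:3.5} encapsulates.
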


In order to prove Proposition \ref{prop:3.3},  we consider the equation
\begin{equation}\label{eq:3.8}
\partial_t u+\nu(k_1^2+k_2^2)u+\mathcal{L}_{k_1,k_2}u=0,\ \ u(0,y)=u_0(y),
\end{equation}
and give the following two lemmas.

\begin{lemma}\label{lem:3.4}
Let $\delta>1$ and $u(t,y)$ be the solution of \eqref{eq:3.8}. If $0<\nu\ll1$, then for $(k_1,k_2)\neq {\bf 0}$, $\alpha^2>1$, one has
$$
\big\|u(t)\big\|_{L^2}\leq Ce^{-c\nu^{1/2}|k|^{1/2}t-\nu(k_1^2+k_2^2)t}\big\|u_0\big\|_{L^2}.
$$
\end{lemma}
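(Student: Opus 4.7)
The plan is to combine the Gearhart--Pr\"{u}ss-type semigroup bound of Lemma~\ref{lem:2.1} with the pseudospectral estimate for $\mathcal{L}_{k_1,k_2}$ established in Lemma~\ref{lem:3.2}, after peeling off the trivial heat damping $e^{-\nu(k_1^2+k_2^2)t}$.

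First I would observe that the scalar multiple $\nu(k_1^2+k_2^2)I$ commutes with $\mathcal{L}_{k_1,k_2}$, so the solution of \eqref{eq:3.8} factors as
$$u(t)=e^{-\nu(k_1^2+k_2^2)t}\,e^{-t\mathcal{L}_{k_1,k_2}}u_0.$$
It therefore suffices to bound $\|e^{-t\mathcal{L}_{k_1,k_2}}u_0\|_{L^2}$ uniformly in $t$.

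Next I would apply Lemma~\ref{lem:2.1} on the Hilbert space $(L^2(\mathbb{T}_{2\pi}),\langle\cdot,\cdot\rangle_{\ast})$. As noted in the discussion preceding Lemma~\ref{lem:3.2}, the operator $\mathcal{L}_{k_1,k_2}$ is m-accretive with respect to this inner product (its real part equals $\nu m_0^{2}\|\partial_y u\|_{\ast}^{2}\geq 0$), so Lemma~\ref{lem:2.1} gives
$$\|e^{-t\mathcal{L}_{k_1,k_2}}u_0\|_{\ast}\leq e^{\pi/2}\,e^{-t\Psi(\mathcal{L}_{k_1,k_2})}\|u_0\|_{\ast},$$
and Lemma~\ref{lem:3.2} supplies $\Psi(\mathcal{L}_{k_1,k_2})\geq c_2|k|^{1/2}\nu^{1/2}(1-\alpha^{-2})^{3/2}$.

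Finally I would transfer the estimate back to the $L^2$ norm using the equivalence \eqref{eq:3.5}. Under the hypotheses $\delta>1$ and $(k_1,k_2)\neq\mathbf{0}$, integrality of the Fourier indices yields $|k|^{2}\geq 1$, hence $\alpha^{2}=|k|^{2}\delta^{2}\geq\delta^{2}>1$, so $1-\alpha^{-2}\geq 1-\delta^{-2}$ is a positive constant independent of $\nu$ and $k$. Thus the prefactor $(1-\alpha^{-2})^{-1/2}$ produced by \eqref{eq:3.5} can be absorbed into the constant $C$, and the choice $c=c_2(1-\delta^{-2})^{3/2}$ produces the advertised bound. Since all the hard analysis is already packaged into Lemmas~\ref{lem:2.1} and~\ref{lem:3.2}, there is no substantive obstacle here; the only subtle point is that the $\ast$-norm and the $L^{2}$ norm remain uniformly equivalent across all admissible $k$, which is exactly what the assumption $\delta>1$ guarantees. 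This is also the reason the present argument would fail in the borderline case $\delta=1$ (i.e.\ $\alpha^{2}=1$ when $|k|=1$), relegated to the appendix.
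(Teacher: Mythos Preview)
Your proof is correct and follows essentially the same route as the paper: factor off the heat damping, apply the Gearhart--Pr\"uss bound (Lemma~\ref{lem:2.1}) to $\mathcal{L}_{k_1,k_2}$ in the $\|\cdot\|_\ast$ norm, invoke the pseudospectral bound of Lemma~\ref{lem:3.2}, and use \eqref{eq:3.5} to return to $L^2$. The only cosmetic difference is that the paper writes $c=\min\{c_1,\,c_2(1-\alpha^{-2})^{3/2}\}$ while you take $c=c_2(1-\delta^{-2})^{3/2}$; since $\alpha^{2}\geq\delta^{2}$ these are both admissible uniform constants.
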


\begin{proof}
By semigroup theory, one has
\begin{equation}\label{eq:3.9}
u(t,y)=e^{-\nu(k_1^2+k_2^2)t}e^{-t\mathcal{L}_{k_1,k_2}}u_0(y),
\end{equation}
thus
\begin{equation}\label{eq:3.10}
\big\|u(t)\big\|_{L^2} \leq e^{-\nu(k_1^2+k_2^2)t}\left\|e^{-t\mathcal{L}_{k_1,k_2}}u_0\right\|_{L^2}.
\end{equation}
Combining Lemma \ref{lem:2.1} and \ref{lem:3.2}, we obtain
\begin{equation}\label{eq:3.11}
\left\|e^{-t\mathcal{L}_{k_1,k_2}}u_0\right\|_{\ast}\leq e^{-c_2t(1-\alpha^{-2})^{3/2}|k|^{1/2}\nu^{1/2}+\pi/2}\big\|u_0\big\|_{\ast}.
\end{equation}
Thus we deduce by \eqref{eq:3.5} and \eqref{eq:3.11} that
\begin{equation}\label{eq:3.12}
\begin{aligned}
\left\|e^{-t\mathcal{L}_{k_1,k_2}}u_0\right\|_{L^2}&\leq(1-\alpha^{-2})^{-1/2}
\left\|e^{-t\mathcal{L}_{k_1,k_2}}u_0\right\|_{\ast}\\
&\leq (1-\alpha^{-2})^{-1/2}e^{-c_2(1-\alpha^{-2})^{3/2}|k|^{1/2}\nu^{1/2}t+\pi/2}\big\|u_0\big\|_{\ast}\\
&\leq(1-\alpha^{-2})^{-1/2}e^{-c_2(1-\alpha^{-2})^{3/2}|k|^{1/2}\nu^{1/2}t+\pi/2}\big\|u_0\big\|_{L^2}\\
&\leq Ce^{-c|k|^{1/2}\nu^{1/2}t}\big\|u_0\big\|_{L^2},
\end{aligned}
\end{equation}
where
$$
C=\left(1-\alpha^{-2}\right)^{-1/2}e^{\pi/2}, \ \ \ c=\min\left\{c_1, c_2(1-\alpha^{-2})^{3/2}\right\}.
$$
Combining \eqref{eq:3.10} and \eqref{eq:3.12}, one has
$$
\big\|u(t)\big\|_{L^2}\leq Ce^{-c\nu^{1/2}|k|^{1/2}t-\nu(k_1^2+k_2^2)t}\big\|u_0\big\|_{L^2}.
$$
This completes the proof of Lemma \ref{lem:3.4}.
\end{proof}

Similar with the works of Li et al.~(see \cite{LWZ.2020}, Lemma 4.1), the following lemma can be established.

\begin{lemma}\label{lem:3.5}
Let $\delta>1$ and $u(t,y)$ is the solution of \eqref{eq:3.8}. If $0<\nu\ll1$, then for $(k_1,k_2)\neq {\bf 0}$, $\alpha^2>1$, one has
$$
\nu m_0^2\int_{0}^{t}\left(\alpha^2\big\|u(s)\big\|^2_{L^2}+\big\|\partial_yu(s)\big\|^2_{L^2}\right)ds
\lesssim\big\|u_0\big\|^2_{L^2},
$$
and
$$
\nu m_0^2\int_{0}^{t}e^{2as}\left(\alpha^2\big\|u(s)\big\|^2_{L^2}+\big\|\partial_yu(s)\big\|^2_{L^2}\right)ds
\lesssim\left(1+at\right)\big\|u_0\big\|^2_{L^2},
$$
where $a=c\nu^{1/2}|k|^{1/2}+\nu(k_1^2+k_2^2)$. Moreover, for any $c'\in (0,c)$, there exists a constant $C(c,c')>0$ such that
$$
\nu m_0^2\int_{0}^{t}e^{2c'\nu^{1/2}|k|^{1/2}s}\left(\alpha^2\big\|u(s)\big\|^2_{L^2}+\big\|\partial_yu(s)\big\|^2_{L^2}\right)ds
\leq C(c,c')\big\|u_0\big\|^2_{L^2}.
$$
\end{lemma}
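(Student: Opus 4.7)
The plan is to run an energy estimate in the twisted inner product $\langle\cdot,\cdot\rangle_{\ast}$ introduced in \eqref{eq:1.13}, since this is exactly the inner product in which $\mathcal{L}_{k_1,k_2}$ becomes self-adjoint (in fact accretive), as was already observed before Lemma~\ref{lem:3.2}. Taking the $\langle\cdot,u\rangle_{\ast}$ pairing of equation \eqref{eq:3.8} with $u$ and using
$$
\Re\left\langle\mathcal{L}_{k_1,k_2}u,u\right\rangle_{\ast}=\nu m_0^2\|\partial_y u\|^2_{\ast},
$$
together with the identity $\nu(k_1^2+k_2^2)=\nu m_0^2\alpha^2$, produces the clean differential identity
$$
\tfrac{1}{2}\tfrac{d}{dt}\|u\|^2_{\ast}+\nu m_0^2\bigl(\alpha^2\|u\|^2_{\ast}+\|\partial_y u\|^2_{\ast}\bigr)=0.
$$
Integrating in time and then using the equivalence $(1-\alpha^{-2})\|\cdot\|^2_{L^2}\le \|\cdot\|^2_{\ast}\le \|\cdot\|^2_{L^2}$ from \eqref{eq:3.5} (for which we need $\alpha^2>1$, hence the hypothesis $\delta>1$) converts $\|\cdot\|_{\ast}$-bounds into $L^2$-bounds up to the absolute constant $(1-\alpha^{-2})^{-1}\lesssim 1$. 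This immediately yields the first inequality.

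For the second inequality I would weight the energy identity by $e^{2as}$, obtaining
$$
\tfrac{d}{dt}\bigl(e^{2at}\|u\|^2_{\ast}\bigr)+2\nu m_0^2 e^{2at}\bigl(\alpha^2\|u\|^2_{\ast}+\|\partial_y u\|^2_{\ast}\bigr)=2a e^{2at}\|u\|^2_{\ast}.
$$
The right-hand side is controlled by feeding in the pointwise decay from Lemma~\ref{lem:3.4}, namely $\|u(s)\|_{L^2}\le Ce^{-as}\|u_0\|_{L^2}$, which gives $e^{2as}\|u(s)\|^2_{\ast}\le C^2\|u_0\|^2_{L^2}$. Integrating over $[0,t]$ produces $2a\int_0^t e^{2as}\|u\|^2_{\ast}\,ds\lesssim at\|u_0\|^2_{L^2}$, and combined with the initial contribution this gives the claimed $(1+at)\|u_0\|^2_{L^2}$ bound after once more converting $\|\cdot\|_{\ast}$ to $\|\cdot\|_{L^2}$.

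For the third inequality, where the weight is only $e^{2c'\nu^{1/2}|k|^{1/2}s}$ with $c'<c$, I would run the same weighted energy argument with $a$ replaced by $c'\nu^{1/2}|k|^{1/2}$. The crucial difference is that Lemma~\ref{lem:3.4} still provides decay at the full rate $a\ge c\nu^{1/2}|k|^{1/2}$, so
$$
2c'\nu^{1/2}|k|^{1/2}\int_0^{t}e^{2c'\nu^{1/2}|k|^{1/2}s}\|u(s)\|^2_{\ast}\,ds
\le 2c'\nu^{1/2}|k|^{1/2}C^{2}\int_0^{t}e^{-2(c-c')\nu^{1/2}|k|^{1/2}s}\,ds\,\|u_0\|^2_{L^2},
$$
which integrates to a uniform constant $C(c,c')=c'C^{2}/(c-c')$ independent of $t$, $\nu$, $k$. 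This is the only place the strict inequality $c'<c$ is needed.

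None of the steps is hard in isolation; the main subtlety is simply organizational, namely carrying the $\|\cdot\|_{\ast}$ versus $\|\cdot\|_{L^2}$ bookkeeping so that the $(1-\alpha^{-2})^{\pm 1}$ factors that appear at each conversion are absorbed into the implicit constants under the standing hypothesis $\alpha^2>1$, and coupling the dissipation identity to the already-proved semigroup decay of Lemma~\ref{lem:3.4} without losing the enhanced rate $a$.
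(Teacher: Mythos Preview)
Your proposal is correct and is essentially the intended argument. The paper itself does not supply a proof of Lemma~\ref{lem:3.5} but simply refers to \cite[Lemma~4.1]{LWZ.2020}, and the energy computation you outline in the $\langle\cdot,\cdot\rangle_{\ast}$ inner product---yielding the clean dissipation identity and then feeding in the decay from Lemma~\ref{lem:3.4}---is exactly the adaptation of that argument to the present operator $\mathcal{L}_{k_1,k_2}$. One small wording remark: the factor $(1-\alpha^{-2})^{-1}$ is bounded not merely because $\alpha^2>1$, but because $\alpha^2=|k|^2\delta^2\ge\delta^2$ with $\delta>1$ fixed, so that $(1-\alpha^{-2})^{-1}\le(1-\delta^{-2})^{-1}$ is a constant depending only on $\delta$; this is what the paper's convention on implicit constants allows.
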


Now, we give the proof of Proposition \ref{prop:3.3}.

\begin{proof}[The proof of Proposition {\rm \ref{prop:3.3}}]
Using the same discussion of Lemma \ref{lem:2.3} and combining with Lemma \ref{lem:3.4}, we can easily get the \eqref{eq:3.6}. Next, we prove the \eqref{eq:3.7}. Since
$$
e^{-t\mathcal{L}}f=\sum_{k_1,k_2\in \mathbb{Z}}e^{-\nu(k_1^2+k_2^2)t}e^{-t\mathcal{L}_{k_1,k_2}}\hat{f}_1(k_1,k_2,\cdot)(m_0 y)e^{i(k_1x_1+k_2x_2)},
$$
$$
(\partial_{x_1} e^{-t\mathcal{L}}f)_{\neq}=\sum_{(k_1,k_2)\neq {\bf 0}}ik_1e^{-\nu(k_1^2+k_2^2)t}e^{-t\mathcal{L}_{k_1,k_2}}\hat{f}_1(k_1,k_2,\cdot)(m_0 y)e^{i(k_1x_1+k_2x_2)},
$$
$$
(\partial_{x_2} e^{-t\mathcal{L}}f)_{\neq}=\sum_{(k_1,k_2)\neq {\bf 0}}ik_2e^{-\nu(k_1^2+k_2^2)t}e^{-t\mathcal{L}_{k_1,k_2}}\hat{f}_1(k_1,k_2,\cdot)(m_0 y)e^{i(k_1x_1+k_2x_2)},
$$
and
$$
(\partial_{y} e^{-t\mathcal{L}}f)_{\neq}=\sum_{(k_1,k_2)\neq {\bf 0}}m_0\partial_{y} e^{-\nu(k_1^2+k_2^2)t}e^{-t\mathcal{L}_{k_1,k_2}}\hat{f}_1(k_1,k_2,\cdot)(m_0 y)e^{i(k_1x_1+k_2x_2)},
$$
where $f_1(x_1,x_2,y)=f(x_1,x_2,\delta y)$. Then one has
\begin{equation}\label{eq:3.13}
\begin{aligned}
\left\|(\nabla e^{-t\mathcal{L}}f)_{\neq}\right\|^2_{L^2}&\leq4\pi^2\delta\sum_{(k_1,k_2)\neq {\bf 0}}(k^2_1+k^2_2)\left\|e^{-\nu(k_1^2+k_2^2)t}e^{-t\mathcal{L}_{k_1,k_2}}\hat{f}_1\right\|^2_{L^2}\\
&\ \ \ +4\pi^2m_0\sum_{(k_1,k_2)\neq {\bf 0}}\left\|\partial_ye^{-\nu(k_1^2+k_2^2)t}e^{-t\mathcal{L}_{k_1,k_2}}\hat{f}_1\right\|^2_{L^2}\\
&=4\pi^2m_0\sum_{(k_1,k_2)\neq {\bf 0}}(k^2_1+k^2_2)\delta^2\left\|e^{-\nu(k_1^2+k_2^2)t}e^{-t\mathcal{L}_{k_1,k_2}}\hat{f}_1\right\|^2_{L^2}\\
&\ \ \ +4\pi^2m_0\sum_{(k_1,k_2)\neq {\bf 0}}\left\|\partial_ye^{-\nu(k_1^2+k_2^2)t}e^{-t\mathcal{L}_{k_1,k_2}}\hat{f}_1\right\|^2_{L^2}\\
&=4\pi^2m_0\sum_{(k_1,k_2)\neq {\bf 0}}\alpha^2\left\|e^{-\nu(k_1^2+k_2^2)t}e^{-t\mathcal{L}_{k_1,k_2}}\hat{f}_1\right\|^2_{L^2}\\
&\ \ \ +4\pi^2m_0\sum_{(k_1,k_2)\neq {\bf 0}}\left\|\partial_ye^{-\nu(k_1^2+k_2^2)t}e^{-t\mathcal{L}_{k_1,k_2}}\hat{f}_1\right\|^2_{L^2}.
\end{aligned}
\end{equation}
Considering the equation
$$
\partial_t u+\nu(k_1^2+k_2^2)u+\mathcal{L}_{k_1,k_2}u=0, \ \ \ u(0,y)=\hat{f}_1(k_1,k_2,y),
$$
we deduce by \eqref{eq:3.9} and \eqref{eq:3.13} that
$$
\left\|(\nabla e^{-t\mathcal{L}}f)_{\neq}\right\|^2_{L^2}\leq4\pi^2m_0\sum_{(k_1,k_2)\neq {\bf 0}}\alpha^2\big\|u\big\|^2_{L^2}
+4\pi^2m_0\sum_{(k_1,k_2)\neq {\bf 0}}\big\|\partial_yu\big\|^2_{L^2}.
$$
By Lemma \ref{lem:3.5}, we have
$$
\begin{aligned}
&\nu\int_{0}^{\infty}e^{2c'\nu^{1/2}s}\left\|(\nabla e^{-s\mathcal{L}}f)_{\neq}\right\|^2_{L^2}ds\\
\leq&4\pi^2\nu m_0\sum_{(k_1,k_2)\neq {\bf 0}}\int_{0}^{\infty}e^{2c'\nu^{1/2}s}\left(\alpha^2\big\|u\big\|^2_{L^2}+\big\|\partial_yu\big\|^2_{L^2}\right)ds\\
=&4\pi^2\delta\sum_{(k_1,k_2)\neq {\bf 0}}\nu m^2_0\int_{0}^{\infty}e^{2c'\nu^{1/2}s}\left(\alpha^2\big\|u\big\|^2_{L^2}+\big\|\partial_yu\big\|^2_{L^2}\right)ds\\
\leq& 4\pi^2\delta\sum_{(k_1,k_2)\neq {\bf 0}} C(c,c')\big\|\hat{f}_1(k_1,k_2,\cdot)\big\|^2_{L^2}\\
\leq&  C(c,c')4\pi^2\delta\sum_{(k_1,k_2)\neq{\bf 0}}\big\|\hat{f}_1(k_1,k_2,\cdot)\big\|^2_{L^2}\\
=& C(c,c')\big\|f_{\neq}\big\|^2_{L^2}.
\end{aligned}
$$
This completes the proof of Proposition \ref{prop:3.3}.
\end{proof}

\begin{remark}
Similar argument with Proposition {\rm \ref{prop:3.3}} and Lemma {\rm \ref{lem:3.4}}, we can easily get that for $i=1,2$, one has
$$
\big\|\partial_{x_i}u(t)\big\|_{L^2}\leq Ce^{-c\nu^{1/2}|k|^{1/2}t-\nu(k_1^2+k_2^2)t}\big\|\partial_{x_i}u_0\big\|_{L^2},
$$
and
$$
\nu\int_{0}^{\infty}e^{2c'\nu^{1/2}s}\left\|(\nabla e^{-s\mathcal{L}}\partial_{x_i}f)_{\neq}\right\|^2_{L^2}ds\leq C(c,c')\big\|\partial_{x_i}f_{\neq}\big\|^2_{L^2}.
$$
\end{remark}

\subsection{Enhanced dissipation decay of linearized equation}
Here we show that the enhanced dissipation of linearized Navier-Stokes equation \eqref{eq:1.10}. we can easily obtain that the enhanced dissipation of $\Delta v_3$ to \eqref{eq:1.10} by Lemma \ref{lem:2.3}. Next, we consider the enhanced dissipation decay of $\omega_3$. Taking the Fourier transform in $x_1,x_2$, changing $y$ to $\delta y$, and we denote
$$
u(t,y)=\frac{1}{4\pi^2}\int_{\mathbb{T}^2}\Delta v_3(t,x_1,x_2,\delta y)e^{-ik_1x_1-ik_2x_2}dx_1dx_2,
$$
and
$$
w(t,y)=\frac{1}{4\pi^2}\int_{\mathbb{T}^2}\omega_3(t,x_1,x_2,\delta y)e^{-ik_1x_1-ik_2x_2}dx_1dx_2.
$$
Then the equation \eqref{eq:1.10} becomes
\begin{equation}\label{eq:3.14}
\begin{cases}
\partial_t u+\left(\nu(k_1^2+k_2^2)+\mathcal{L}_{k_1,k_2}  \right)u=0,\\
\partial_t w+\left(\nu(k_1^2+k_2^2)+\mathcal{H}_{k_1,k_2}  \right)w=-i|k|\delta^3\sin (y+\alpha_k)(\alpha^2-\partial_y^2)^{-1}u,\\
u(0,y)=u_0(y),\ \ \ w(0,y)=w_0(y),
\end{cases}
\end{equation}
where $\alpha^2=(k_1^2+k_2^2)\delta^2$, the $\mathcal{L}_{k_1,k_2}$ and $\mathcal{H}_{k_1,k_2}$ are defined in \eqref{eq:3.3} and \eqref{eq:3.4}. The following result proves the enhanced dissipation decay of $w$ in \eqref{eq:3.14}.

\begin{proposition}\label{prop:3.6}
Let $\delta>1$ and $(u,w)$ are the solutions of \eqref{eq:3.14}. If $0<\nu\ll1$, then for $(k_1,k_2)\neq {\bf 0}$, $\alpha^2>1$, one has
$$
\big\|w(t)\big\|_{L^2}\lesssim e^{-at}\left(\big\|w_0\big\|_{L^2}+\big\|u_0\big\|_{L^2}\right),
$$
where $a=\nu(k_1^2+k_2^2)+c|k|^{1/2}\nu^{1/2}$.
\end{proposition}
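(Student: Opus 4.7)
My plan is to exploit an algebraic identity between the source term of the $w$-equation and the operators $\mathcal{L}_{k_1,k_2}$, $\mathcal{H}_{k_1,k_2}$ to replace the forced problem for $w$ by a homogeneous problem for a suitable linear combination of $u$ and $w$. From the definitions \eqref{eq:3.3}--\eqref{eq:3.4} one reads off
\[
\mathcal{H}_{k_1,k_2}-\mathcal{L}_{k_1,k_2}=i|k|\delta^{2}\sin(y+\alpha_k)(\alpha^{2}-\partial_y^{2})^{-1},
\]
so the forcing in the second equation of \eqref{eq:3.14} is exactly $-\delta(\mathcal{H}_{k_1,k_2}-\mathcal{L}_{k_1,k_2})u$. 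Using the first equation of \eqref{eq:3.14} to substitute $\mathcal{L}_{k_1,k_2}u=-\partial_t u-\nu|k|^{2}u$, I will rewrite this as $-\delta(\partial_t+\nu|k|^{2}+\mathcal{H}_{k_1,k_2})u$. Therefore the change of unknown $z:=w+\delta u$ will satisfy the homogeneous equation
\[
(\partial_t+\nu|k|^{2}+\mathcal{H}_{k_1,k_2})z=0,\qquad z(0)=w_0+\delta u_0.
\]

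Once the coupling has been eliminated, the estimate follows from results already at our disposal. Lemma~\ref{lem:2.3} gives the pseudospectral bound $\Psi(\mathcal{H}_{k_1,k_2})\geq c_1|k|^{1/2}\nu^{1/2}$; applying the Gearhart--Pr\"uss estimate of Lemma~\ref{lem:2.1} together with the extra viscous factor $e^{-\nu|k|^{2}t}$---exactly as in the derivation of Lemma~\ref{lem:3.4}---I obtain
\[
\|z(t)\|_{L^{2}}\lesssim e^{-at}\|z(0)\|_{L^{2}}\lesssim e^{-at}\bigl(\|w_0\|_{L^{2}}+\|u_0\|_{L^{2}}\bigr),
\]
with $a=\nu|k|^{2}+c|k|^{1/2}\nu^{1/2}$ for some $c\in(0,c_1]$. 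Since $\alpha^{2}>1$, Lemma~\ref{lem:3.4} applied to the first equation of \eqref{eq:3.14} yields $\|u(t)\|_{L^{2}}\lesssim e^{-at}\|u_0\|_{L^{2}}$ after shrinking $c$ if necessary to $\min\{c_1,c_2(1-\alpha^{-2})^{3/2}\}$. The triangle inequality applied to $w=z-\delta u$ then delivers the claimed bound.

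The main (and essentially only) obstacle is spotting the identity $-i|k|\delta^{3}\sin(y+\alpha_k)(\alpha^{2}-\partial_y^{2})^{-1}=-\delta(\mathcal{H}_{k_1,k_2}-\mathcal{L}_{k_1,k_2})$; this is the \emph{symmetry of the planar helical flow} alluded to in the introduction, and it has no analogue for the 3-D Kolmogorov flow of \cite{LWZ.2020}. Without it, a direct Duhamel estimate for $w$ would be hampered by the fact that $u$ and $w$ decay at the same enhanced-dissipation rate $a$, producing a $t\,e^{-at}$ factor from the time integration. Absorbing this $t$ into a slightly slower exponential would cost a factor of size $(|k|^{1/2}\nu^{1/2})^{-1}\gtrsim\nu^{-1/2}$, a loss fatal to the eventual $\nu^{7/4}$ threshold argument. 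The algebraic reduction bypasses this loss entirely and sidesteps the wave-operator machinery developed in \cite{LWZ.2020}.
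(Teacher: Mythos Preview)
Your proof is correct and follows essentially the same route as the paper: the paper forms the combination $m_0 w+u$ (equivalently $m_0(w+\delta u)=m_0 z$), observes that it satisfies the homogeneous $\mathcal{H}_{k_1,k_2}$-equation, applies Lemmas~\ref{lem:2.1} and~\ref{lem:2.3} for its decay, and then recovers $w$ via Lemma~\ref{lem:3.4} and the triangle inequality. Your phrasing via the operator identity $\mathcal{H}_{k_1,k_2}-\mathcal{L}_{k_1,k_2}=i|k|\delta^{2}\sin(y+\alpha_k)(\alpha^{2}-\partial_y^{2})^{-1}$ makes the mechanism particularly transparent, but the substance is identical.
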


\begin{proof}
Using the definition of $\mathcal{L}_{k_1,k_2}$ in \eqref{eq:3.3}, the first equation of \eqref{eq:3.14} is written as
$$
\begin{aligned}
&\partial_t u+\left(\nu(k_1^2+k_2^2)+\mathcal{L}_{k_1,k_2}  \right)u\\
=&\partial_t u+\left(\nu(k_1^2+k_2^2)-\nu m_0^2\partial^2_{y}+i|k|\delta^2\sin (y+\alpha_k)\left(1-(\alpha^2-\partial^2_{y})^{-1}\right) \right)u\\
=&\partial_t u+\left(\nu(k_1^2+k_2^2)-\nu m^2_0\partial^2_{y}+i|k|\delta^2\sin (y+\alpha_k)\right)u
-i|k|\delta^2\sin (y+\alpha_k)(\alpha^2-\partial^2_{y})^{-1}u\\
=& \partial_t u+\left(\nu(k_1^2+k_2^2)+\mathcal{H}_{k_1,k_2}\right)u
-i|k|\delta^2\sin (y+\alpha_k)(\alpha^2-\partial^2_{y})^{-1}u=0,
\end{aligned}
$$
and from the second equation of \eqref{eq:3.14}, one has that
$$
\partial_t m_0w+\left(\nu(k_1^2+k_2^2)+\mathcal{H}_{k_1,k_2}\right)m_0w=-i|k|\delta^2\sin (y+\alpha_k)(\alpha^2-\partial_y^2)^{-1}u,
$$
thus we have
$$
\partial_t( m_0w+u)+\left(\nu(k_1^2+k_2^2)+\mathcal{H}_{k_1,k_2}\right)( m_0w+u)=0.
$$
Then we deduce by Lemma \ref{lem:2.1} and \ref{lem:2.3} that
\begin{equation}\label{eq:3.15}
\big\|u+m_0w\big\|_{L^2}\lesssim e^{-at}\left(m_0\big\|w_0\big\|_{L^2}+\big\|u_0\big\|_{L^2}\right),
\end{equation}
and combining Lemma \ref{lem:3.4} and \eqref{eq:3.15}, to obtain
$$
\begin{aligned}
\big\|w\big\|_{L^2}&\lesssim m_0^{-1}e^{-at}\left(m_0\big\|w_0\big\|_{L^2}+\big\|u_0\big\|_{L^2}\right)+m_0^{-1}e^{-at}\big\|u_0\big\|_{L^2}\\
&\lesssim e^{-at}\left(\big\|w_0\big\|_{L^2}+\big\|u_0\big\|_{L^2}\right).
\end{aligned}
$$
This completes the proof of Proposition \ref{prop:3.6}.
\end{proof}

\begin{remark} Obviously, for the $\omega_3$ in \eqref{eq:1.10}, we deduce by Proposition {\rm \ref{prop:3.6}} that
$$
\big\|(\omega_3)_{\neq}\big\|_{L^2}\lesssim e^{-c\nu^{1/2}t-\nu t}\left(\big\|\omega_3(0)\big\|_{L^2}+\big\|\Delta v_3(0)\big\|_{L^2}\right).
$$
And similar argument with Proposition {\rm \ref{prop:3.6}}, we can obtain that for $i=1,2$, one has
$$
\big\|\partial_{x_i}\omega_3\big\|_{L^2}\lesssim e^{-c\nu^{1/2}t-\nu t}\left(\big\|\partial_{x_i}\omega_3(0)\big\|_{L^2}+\big\|\partial_{x_i}\Delta v_3(0)\big\|_{L^2}\right).
$$
\end{remark}

\vskip .05in

\section{Nonlinear stability threshold}\label{sec.4}

In this section, we study the nonlinear stability of \eqref{eq:1.3}, and mainly consider the \eqref{eq:1.5}. The \eqref{eq:1.5} can be written as
\begin{equation}\label{eq:4.1}
\begin{cases}
(\partial_t+\mathcal{L})\Delta v_3=\div f,\\
(\partial_t+\mathcal{H})\omega_3=\delta\sin(m_0 y)\partial_{x_1}v_3+\delta\cos(m_0 y)\partial_{x_2}v_3+\div g,
\end{cases}
\end{equation}
where
\begin{equation}\label{eq:4.2}
f=-\nabla(V\cdot\nabla v_3)-(0,0,\Delta p),
\end{equation}
and
\begin{equation}\label{eq:4.3}
g=-V\omega_3+\omega v_3=-\omega_3(v_1,v_2,v_3)+v_3(\omega_1,\omega_2,\omega_3),
\end{equation}
and the $\mathcal{L}$ and $\mathcal{H}$ are defined in \eqref{eq:1.11} and \eqref{eq:1.12}, the $p$ and $\omega$ are defined in \eqref{eq:1.6}.

\vskip .05in

\subsection{The estimates of velocity and vorticity}
Here we consider the estimates of velocity $\Delta v_3$ and vorticity $\omega_3$. We establish the time-space estimate for non-zero mode parts, which include enhanced dissipation decay. Considering the first equation in \eqref{eq:4.1}, one has the following estimates of velocity.

\begin{proposition}\label{prop:4.1}
Let $\Delta v_3$ be the solution of \eqref{eq:4.1}, then
$$
\big\|\partial_{x_1}(\Delta v_3)\big\|^2_{X_{ed}}
\lesssim \big\|\partial_{x_1}(\Delta v_3)(0)\big\|^2_{ L^2}+\nu^{-1}\left\|e^{\epsilon\nu^{1/2}t}\partial_{x_1}f\right\|^2_{L^2 L^2},
$$
and
$$
\big\|\partial_{x_2}(\Delta v_3)\big\|^2_{X_{ed}}
\lesssim \big\|\partial_{x_2}(\Delta v_3)(0)\big\|^2_{ L^2}+\nu^{-1}\left\|e^{\epsilon\nu^{1/2}t}\partial_{x_2}f\right\|^2_{L^2 L^2},
$$
where $\|\cdot\|_{X_{ed}}$ is defined in \eqref{eq:1.15}.
\end{proposition}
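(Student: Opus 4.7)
The plan is to apply $\partial_{x_i}$ for $i=1,2$ to the first equation of \eqref{eq:4.1} and then invoke the linear decay theory developed in Section \ref{sec.3}. Since $\mathcal{L}$ has coefficients depending only on $y$, it commutes with $\partial_{x_i}$, so setting $W := \partial_{x_i}(\Delta v_3)$ yields
$$
(\partial_t + \mathcal{L})W = \div(\partial_{x_i} f), \qquad W(0) = \partial_{x_i}(\Delta v_3)(0).
$$
By \eqref{eq:1.9}, both $W$ and $\partial_{x_i} f$ have vanishing zero mode, so the projector $P_{\neq}$ in Proposition \ref{prop:3.3} is transparent and all estimates proved there apply directly.

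By Duhamel's formula,
$$
W(t) = e^{-t\mathcal{L}}W(0) + \int_0^t e^{-(t-s)\mathcal{L}}\div(\partial_{x_i}f)(s)\,ds.
$$
For the homogeneous term, Proposition \ref{prop:3.3} already supplies all three pieces of $\|\cdot\|_{X_{ed}}$: the bound \eqref{eq:3.6} controls the $L^\infty L^2$ and $\nu^{1/2}L^2 L^2$ parts (integrating $e^{-2(c-\epsilon)\nu^{1/2}t}$ in time against the weight $e^{\epsilon\nu^{1/2}t}$ produces the $\nu^{-1/2}$ factor that pairs with the $\nu^{1/2}$ in \eqref{eq:1.15}), while the time-integrated gradient estimate \eqref{eq:3.7} with $c' = \epsilon$ supplies the $\nu\|e^{\epsilon\nu^{1/2}t}\nabla(\cdot)\|_{L^2L^2}^2$ piece. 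All three are bounded by $C\|W(0)\|_{L^2}^2$.

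The substantive work is the inhomogeneous term. To exploit the divergence structure, integrate by parts and work by duality: for any test $\psi$ with $P_0\psi = 0$,
$$
\big\langle e^{-(t-s)\mathcal{L}}\div(\partial_{x_i}f)(s),\,\psi\big\rangle
= -\big\langle \partial_{x_i}f(s),\,\nabla e^{-(t-s)\mathcal{L}^\ast}\psi\big\rangle.
$$
Because $\mathcal{L}$ is self-adjoint in the inner product $\langle\cdot,\cdot\rangle_\ast$ of \eqref{eq:1.13}---the key observation already used in the proof of Lemma \ref{lem:3.2}---the adjoint semigroup $e^{-t\mathcal{L}^\ast}$ inherits the same pseudospectral and time-integrated gradient bounds, so \eqref{eq:3.6}--\eqref{eq:3.7} hold with $\mathcal{L}^\ast$ in place of $\mathcal{L}$ up to the norm equivalence \eqref{eq:3.5}. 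Applying \eqref{eq:3.7} to the adjoint trades the spatial derivative in $\nabla e^{-(t-s)\mathcal{L}^\ast}\psi$ for a factor of $\nu^{-1/2}$; combined with Cauchy--Schwarz in $s$ and Young's convolution inequality in $t$ (to distribute the weight $e^{\epsilon\nu^{1/2}t}$ between the two factors), this controls the $L^\infty L^2$ and $\nu^{1/2}L^2 L^2$ pieces of $W$ by $\nu^{-1/2}\|e^{\epsilon\nu^{1/2}s}\partial_{x_i}f\|_{L^2L^2}$. For the gradient piece of $X_{ed}$, differentiate Duhamel's formula in $x$ and repeat the argument with a second application of \eqref{eq:3.7}; the two appearances of $\nu^{-1/2}$ combine to produce the $\nu^{-1}$ prefactor stated in the proposition.

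The principal obstacle is exactly this duality step: one has to be sure that $e^{-t\mathcal{L}^\ast}$ inherits the enhanced-dissipation estimates of Section \ref{sec.3}, and that the weight $e^{\epsilon\nu^{1/2}t}$ can be passed through the time convolution without losing the gain. Both points are resolved by the self-adjointness of $\mathcal{L}$ in $\langle\cdot,\cdot\rangle_\ast$ together with the equivalence \eqref{eq:3.5}, after which the scheme mirrors the proof of Lemma \ref{lem:2.4} with $\mathcal{H}$ replaced by $\mathcal{L}$. The estimate for $\partial_{x_2}(\Delta v_3)$ is obtained by exactly the same argument, since $\partial_{x_2}$ also commutes with $\mathcal{L}$ and produces a non-zero-mode source.
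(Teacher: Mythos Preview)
Your duality idea is sound for the weighted $L^\infty_t L^2_x$ piece, and in fact it is secretly the same mechanism the paper uses: the paper's Step~1 tests the equation against $\partial_{x_1}\Delta v_3^{NL}$ and against $-m_0^2\,\partial_{x_1}v_3^{NL}$ so that the nonlocal terms cancel, producing the coercive quantity $\|W\|_{L^2}^2 - m_0^2\|\nabla\Delta^{-1}W\|_{L^2}^2$. A short computation shows this quantity is exactly $\|W\|_\ast^2$, so the paper's cancellation trick \emph{is} the $\ast$-energy estimate. Your observation about self-adjointness in $\langle\cdot,\cdot\rangle_\ast$ is therefore precisely the right structural point.

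The genuine gap is in how you pass to the $\nu^{1/2}\|e^{\epsilon\nu^{1/2}t}W\|_{L^2_tL^2_x}^2$ piece and the gradient piece. Estimate \eqref{eq:3.7} gives only $L^2_\tau$ control of the kernel $\tau\mapsto \nabla e^{-\tau\mathcal{L}^\ast}$, and Young's inequality then yields $L^2_\tau\ast L^2_s\to L^\infty_t$, which is the $L^\infty_tL^2_x$ bound you already have---not $L^2_t$. There is no $L^1_\tau$ bound available to feed into Young. The paper handles this in Step~2 by a time-slicing argument: partition time into intervals of length $\nu^{-1/2}$, apply the basic estimate \eqref{eq:4.12} on each interval, and use the semigroup decay \eqref{eq:3.6} between intervals to sum a geometric series (equations \eqref{eq:4.13}--\eqref{eq:4.15}). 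Likewise, your ``second application of \eqref{eq:3.7}'' for the gradient would amount to a maximal-regularity bound on $\nabla e^{-\tau\mathcal{L}}\div$, which does not follow from \eqref{eq:3.7} alone; the paper instead returns to the weighted energy inequality \eqref{eq:4.10} (equivalently, the weighted $\ast$-energy), where the extra term $\nu^{1/2}e^{2\epsilon\nu^{1/2}t}\|W\|_{L^2}^2$ coming from differentiating the weight is absorbed by the $L^2_tL^2_x$ bound just established.
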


\begin{proof}
In order to estimate $\partial_{x_1}(\Delta v_3)$, we apply $\partial_{x_1}$ to first equation of \eqref{eq:4.1} and decompose resulting equation into
\begin{equation}\label{eq:4.4}
\begin{cases}
(\partial_t+\mathcal{L})\partial_{x_1}(\Delta v^{NL}_3)=\div \partial_{x_1}f,\\
(\Delta v^{NL}_3)(0)=0,
\end{cases}
\end{equation}
and
\begin{equation}\label{eq:4.5}
\begin{cases}
(\partial_t+\mathcal{L})\partial_{x_1}(\Delta v^{L}_3)=0,\\
(\Delta v^{L}_3)(0)=(\Delta v_3)(0),
\end{cases}
\end{equation}
where $v_3=v_3^{L}+v_3^{NL}$. We estimate $\partial_{x_1}(\Delta v^{NL}_3)$ and $\partial_{x_1}(\Delta v^{L}_3)$ separately, the detail is as follows

\vskip .05in

\noindent {\bf Step 1. The basic estimate of $\partial_{x_1}(\Delta v^{NL}_3)$.}
Considering the equation \eqref{eq:4.4}, one gets
\begin{equation}\label{eq:4.6}
\begin{aligned}
&\frac{1}{2}\frac{d}{dt}\left\|\partial_{x_1}(\Delta v^{NL}_3)\right\|^2_{L^2}+\nu\left\|\nabla\partial_{x_1}(\Delta v^{NL}_3)\right\|^2_{L^2}\\
& \ +\Re\left\langle\sin (m_0 y)\partial^2_{x_1}(v^{NL}_3),\partial_{x_1}(\Delta v^{NL}_3)\right\rangle+\Re\left\langle\cos (m_0 y)\partial_{x_2}\partial_{x_1}(v^{NL}_3),\partial_{x_1}(\Delta v^{NL}_3)\right\rangle\\
=&\Re\left\langle \div \partial_{x_1}f, \partial_{x_1}(\Delta v^{NL}_3)\right\rangle,
\end{aligned}
\end{equation}
and
\begin{equation}\label{eq:4.7}
\begin{aligned}
&\frac{1}{2}\frac{d}{dt}\left\|\partial_{x_1}(\nabla v^{NL}_3)\right\|^2_{L^2}+\nu\left\|\partial_{x_1}(\Delta v^{NL}_3)\right\|^2_{L^2}\\
&\ -\Re\left\langle\delta^2\sin (m_0 y)\partial^2_{x_1}(\Delta v^{NL}_3),\partial_{x_1}(v^{NL}_3)\right\rangle-\Re\left\langle\delta^2\cos (m_0 y)\partial_{x_2}\partial_{x_1}(\Delta v^{NL}_3),\partial_{x_1}(v^{NL}_3)\right\rangle\!\!\!\!\\
=&\Re\left\langle \div \partial_{x_1}f, -\partial_{x_1}(v^{NL}_3)\right\rangle.
\end{aligned}
\end{equation}
We deduce from \eqref{eq:4.7} that
\begin{equation}\label{eq:4.8}
\begin{aligned}
&-\frac{1}{2}\frac{d}{dt}\left(m_0^2\left\|\partial_{x_1}(\nabla v^{NL}_3)\right\|^2_{L^2}\right)-\nu m_0^2\left\|\partial_{x_1}(\Delta v^{NL}_3)\right\|^2_{L^2}\\
&+\Re\left\langle\sin (m_0 y)\partial^2_{x_1}(\Delta v^{NL}_3),\partial_{x_1}(v^{NL}_3)\right\rangle+\Re\left\langle\cos (m_0 y)\partial_{x_2}\partial_{x_1}(\Delta v^{NL}_3),\partial_{x_1}(v^{NL}_3)\right\rangle\\
=&-m_0^2\Re\left\langle \div \partial_{x_1}f, -\partial_{x_1}(v^{NL}_3)\right\rangle.
\end{aligned}
\end{equation}
Since
\begin{equation}\label{eq:4.9}
\left\|\partial_{x_1}(\Delta v^{NL}_3)\right\|^2_{L^2}\geq \left\|\partial_{x_1}(\nabla v^{NL}_3)\right\|^2_{L^2},\ \ \
\left\|\partial_{x_1}(\nabla\Delta v^{NL}_3)\right\|^2_{L^2}\geq \left\|\partial_{x_1}(\Delta v^{NL}_3)\right\|^2_{L^2},
\end{equation}
\vskip .01in
$$
\left\langle\sin (m_0 y)\partial^2_{x_1}(v^{NL}_3),\partial_{x_1}(\Delta v^{NL}_3)\right\rangle
=-\left\langle\sin (m_0 y)\partial^2_{x_1}(\Delta v^{NL}_3),\partial_{x_1}(v^{NL}_3)\right\rangle,
$$
and
$$
\left\langle\cos (m_0 y)\partial_{x_2}\partial_{x_1}(v^{NL}_3),\partial_{x_1}(\Delta v^{NL}_3)\right\rangle
=-\left\langle\cos (m_0 y)\partial_{x_2}\partial_{x_1}(\Delta v^{NL}_3),\partial_{x_1}(v^{NL}_3)\right\rangle.
$$
Then we obtain by \eqref{eq:4.6} and \eqref{eq:4.8} that
\begin{equation}\label{eq:4.10}
\begin{aligned}
&\frac{1}{2}\frac{d}{dt}\left(\left\|\partial_{x_1}(\Delta v^{NL}_3)\right\|^2_{L^2}-m_0^2\left\|\partial_{x_1}(\nabla v^{NL}_3)\right\|^2_{L^2} \right)\\
&\ \ \ \ \ \ \ +\frac{\nu}{2}\left(1-m_0^2\right)\left\|\partial_{x_1}(\nabla\Delta v^{NL}_3)\right\|^2_{L^2}\lesssim\nu^{-1}\big\|\partial_{x_1}f\big\|^2_{L^2},
\end{aligned}
\end{equation}
where we use
$$
\begin{aligned}
&\Re\left\langle \div \partial_{x_1}f, \partial_{x_1}(\Delta v^{NL}_3)\right\rangle-m_0^2\Re\left\langle \div \partial_{x_1}f, -\partial_{x_1}(v^{NL}_3)\right\rangle\\
=&-\Re\left\langle \partial_{x_1}f, \partial_{x_1}(\nabla\Delta v^{NL}_3)\right\rangle-m_0^2\Re\left\langle \partial_{x_1}f, \partial_{x_1}(\nabla v^{NL}_3)\right\rangle\\
\leq& C\nu^{-1}(1-m_0^2)^{-1}\left\|\partial_{x_1}f\right\|^2_{L^2}+\frac{\nu(1-m_0^2)}{2}\left\|\partial_{x_1}(\nabla\Delta v^{NL}_3)\right\|^2_{L^2}.
\end{aligned}
$$
Combining \eqref{eq:4.9}, \eqref{eq:4.10} and $(1-m_0^2)^{-1}\lesssim 1$, one gets
\begin{equation}\label{eq:4.11}
 \left\|\partial_{x_1}(\Delta v^{NL}_3)(t)\right\|^2_{L^2}+\nu\int_{0}^{t}\left\|\partial_{x_1}(\nabla\Delta v^{NL}_3)(s)\right\|^2_{L^2}ds \lesssim\nu^{-1}\int_{0}^{t}\big\|\partial_{x_1}f(s)\big\|^2_{L^2}ds.
\end{equation}
Therefore, we obtain by \eqref{eq:4.4} and \eqref{eq:4.11} that
\begin{equation}\label{eq:4.12}
\left\|\int_{0}^{t}e^{-(t-s)\mathcal{L}}\div \partial_{x_1}f(s)ds\right\|^2_{L^2}=\left\|\partial_{x_1}(\Delta v^{NL}_3)(t)\right\|^2_{L^2}
\lesssim\nu^{-1}\int_{0}^{t}\big\|\partial_{x_1}f(s)\big\|^2_{L^2}ds.
\end{equation}

\noindent {\bf Step 2. The estimate of $\big\|\partial_{x_1}(\Delta v^{NL}_3)\big\|_{X_{ed}}$.}  Taking $\tau_0=\nu^{-1/2}$,  for any $k\in \mathbb{N}^{+}$, we define
$$
I_k=\nu^{-1}\int_{(k-1)\tau_0}^{k\tau_0}\big\|\partial_{x_1}f(s)\big\|^2_{L^2}ds.
$$
By \eqref{eq:4.12}, we know that for any $0\leq t<\tau_0$, one has
\begin{equation}\label{eq:4.13}
\left\|\partial_{x_1}(\Delta v^{NL}_3)(t)\right\|^2_{L^2}\lesssim\nu^{-1}\int_{0}^{\tau_0}\big\|\partial_{x_1}f(s)\big\|^2_{L^2}ds=I_1.
\end{equation}
For any $l\tau_0\leq t<(l+1)\tau_0$, $l\geq 1$ is an integer, one has
$$
\begin{aligned}
\partial_{x_1}(\Delta v^{NL}_3)(t)&=\int_{0}^{t}e^{-(t-s)\mathcal{L}}\div \partial_{x_1}f(s)ds\\
&=\sum_{k=1}^{l}\int_{(k-1)\tau_0}^{k\tau_0}e^{-(t-s)\mathcal{L}}\div \partial_{x_1}f(s)ds
+\int_{l\tau_0}^{t}e^{-(t-s)\mathcal{L}}\div \partial_{x_1}f(s)ds.
\end{aligned}
$$
Combining Proposition \ref{prop:3.3} and similar argument with \eqref{eq:4.12}, for $k=1,2,\cdots, l$, one gets
$$
\begin{aligned}
&\left\|\int_{(k-1)\tau_0}^{k\tau_0}e^{-(t-s)\mathcal{L}}\div \partial_{x_1}f(s)ds\right\|^2_{L^2}\\
=&\left\|e^{-(t-k\tau_0)\mathcal{L}}\int_{(k-1)\tau_0}^{k\tau_0}e^{-(k\tau_0-s)\mathcal{L}}\div \partial_{x_1}f(s)ds\right\|^2_{L^2}\\
\lesssim& ~e^{-2c\nu^{1/2}(t-k\tau_0)}I_k,
\end{aligned}
$$
and
$$
\left\|\int_{l\tau_0}^{t}e^{-(t-s)\mathcal{L}}\div \partial_{x_1}f(s)ds\right\|^2_{L^2}
\lesssim\nu^{-1}\int_{l\tau_0}^{(l+1)\tau_0}\big\|\partial_{x_1}f(s)\big\|^2_{L^2}ds=I_{l+1}.
$$
Then for any $l\tau_0\leq t<(l+1)\tau_0$, we obtain
\begin{equation}\label{eq:4.14}
\big\|\partial_{x_1}(\Delta v^{NL}_3)(t)\big\|^2_{L^2}
\lesssim \sum_{k=1}^{l}e^{-2c\nu^{1/2}(t-k\tau_0)}I_k+I_{l+1}\lesssim\sum_{k=1}^{l+1}e^{-2c(l+1-k)}I_{k},
\end{equation}
and combining \eqref{eq:4.13}, we know that for any $l\tau_0\leq t<(l+1)\tau_0$, $l\geq 0$ is an integer, the \eqref{eq:4.14} can also be established. Since $\tau_0=\nu^{-1/2}$, then for $\epsilon\in (0,c)$, we can easily get
$$
\begin{aligned}
\sum_{k=1}^{\infty}e^{2\epsilon k}I_k
&=\sum_{k=1}^{\infty}e^{2\epsilon k}\nu^{-1}\int_{(k-1)\tau_0}^{k\tau_0}\big\|\partial_{x_1}f(s)\big\|^2_{L^2}ds\\
&\lesssim\sum_{k=1}^{\infty}\nu^{-1}\int_{(k-1)\tau_0}^{k\tau_0}e^{2\epsilon\nu^{1/2}s}
\big\|\partial_{x_1}f(s)\big\|^2_{L^2}ds\\
&=\nu^{-1}\int_{0}^{\infty}e^{2\epsilon\nu^{1/2}s}\big\|\partial_{x_1}f(s)\big\|^2_{L^2}ds.
\end{aligned}
$$
Then we deduce by \eqref{eq:4.14} that
$$
\begin{aligned}
\|\partial_{x_1}(\Delta v^{NL}_3)(t)\|_{L^2}
&\lesssim\sum_{k=1}^{l+1}e^{-c(l+1-k)}I^{1/2}_k\\
&\lesssim \left(\sum_{k=1}^{l+1}e^{-(c+\epsilon)(l+1-k)}I_k\right)^{1/2}\left(\sum_{k=1}^{l+1}e^{-(c-\epsilon)(l+1-k)}\right)^{1/2}\\
&\lesssim \left(\sum_{k=1}^{l+1}e^{-(c+\epsilon)(l+1-k)}I_k\right)^{1/2}.
\end{aligned}
$$
Therefore, we have
\begin{equation}\label{eq:4.15}
\begin{aligned}
&\nu^{1/2}\int_{0}^{\infty}e^{2\epsilon\nu^{1/2}t}\left\|\partial_{x_1}(\Delta v^{NL}_3)(t)\right\|^2_{L^2}dt\\
\lesssim&~\nu^{1/2}\sum_{l=0}^{\infty}e^{2\epsilon(l+1)}\int_{l\tau_0}^{(l+1)\tau_0}\left\|\partial_{x_1}(\Delta v^{NL}_3)(t)\right\|^2_{L^2}dt\\
\lesssim&~\nu^{1/2}\sum_{l=0}^{\infty}e^{2\epsilon(l+1)}\tau_0\sum_{k=1}^{l+2}e^{-(c+\epsilon)(l+2-k)}I_k\\
=&\sum_{k=1}^{\infty}e^{2\epsilon k}I_k\sum_{l=k-2}^{\infty}e^{-(c-\epsilon)(l+2-k)}\\
\lesssim & \sum_{k=1}^{\infty}e^{2\epsilon k}I_k\lesssim \nu^{-1}\left\|e^{\epsilon\nu^{1/2}t}\partial_{x_1}f\right\|^2_{L^2L^2}.
\end{aligned}
\end{equation}
We get by \eqref{eq:4.10} that
$$
\begin{aligned}
&\frac{1}{2}\frac{d}{dt}\left(e^{2\epsilon\nu^{1/2}t}\big(\left\|\partial_{x_1}(\Delta v^{NL}_3)\right\|^2_{L^2}-m_0^2\left\|\partial_{x_1}(\nabla v^{NL}_3)\right\|^2_{L^2} \big)\right)\\
&+\frac{\nu}{2}\big(1-m_0^2\big)e^{2\epsilon\nu^{1/2}t}\left\|\partial_{x_1}(\nabla\Delta v^{NL}_3)\right\|^2_{L^2}\\
\lesssim& ~\nu^{-1}e^{2\epsilon\nu^{1/2}t}\big\|\partial_{x_1}f\big\|^2_{L^2}+\nu^{1/2}e^{2\epsilon\nu^{1/2}t}
\left(\left\|\partial_{x_1}(\Delta v^{NL}_3)\right\|^2_{L^2}-m_0^2\left\|\partial_{x_1}(\nabla v^{NL}_3)\right\|^2_{L^2} \right),
\end{aligned}
$$
then combining \eqref{eq:4.4}, \eqref{eq:4.9} and \eqref{eq:4.15}, we have
\begin{equation}\label{eq:4.16}
\begin{aligned}
&e^{2\epsilon\nu^{1/2}t}\big(1-m_0^2\big)\left\|\partial_{x_1}(\Delta v^{NL}_3)\right\|^2_{L^2}+\nu\big(1-m_0^2\big)\int_{0}^{t}e^{2\epsilon\nu^{1/2}s}\left\|\partial_{x_1}(\nabla\Delta v^{NL}_3)\right\|^2_{L^2}ds\\
\lesssim& \nu^{-1}\int_{0}^{t}e^{2\epsilon\nu^{1/2}s}\big\|\partial_{x_1}f\big\|^2_{L^2}ds
+\nu^{1/2}\int_{0}^{t}e^{2\epsilon\nu^{1/2}s}\big\|\partial_{x_1}(\Delta v^{NL}_3)\big\|^2_{L^2}ds\\
\lesssim& \nu^{-1}\int_{0}^{t}e^{2\epsilon\nu^{1/2}s}\big\|\partial_{x_1}f\big\|^2_{L^2}ds.
\end{aligned}
\end{equation}
Thus, we deduce by \eqref{eq:4.15} and \eqref{eq:4.16} that
\begin{equation}\label{eq:4.17}
\begin{aligned}
\big\|\partial_{x_1}(\Delta v^{NL}_3)\big\|^2_{X_{ed}}=&\left\|e^{\epsilon\nu^{1/2}t}\partial_{x_1}(\Delta v^{NL}_3)\right\|^2_{L^\infty L^2}+\nu^{1/2}\left\|e^{\epsilon\nu^{1/2}t}\partial_{x_1}(\Delta v^{NL}_3)\right\|^2_{L^2 L^2}\\
&+\nu\left\|e^{\epsilon\nu^{1/2}t}\partial_{x_1}(\nabla\Delta v^{NL}_3)\right\|^2_{L^2 L^2}\\
\lesssim &\nu^{-1}\left\|e^{\epsilon\nu^{1/2}t}\partial_{x_1}f\right\|^2_{L^2 L^2}.
\end{aligned}
\end{equation}

\noindent {\bf Step 3. The estimate of $\left\|\partial_{x_1}(\Delta v_3)\right\|_{X_{ed}}$.} First, we consider linear equation \eqref{eq:4.5}, the solution $\partial_{x_1}(\Delta v^{L}_3)(t)$ can be written as
$$
\partial_{x_1}(\Delta v^{L}_3)(t)=e^{-t\mathcal{L}}\partial_{x_1}(\Delta v_3)(0).
$$
Then we deduce by Proposition \ref{prop:3.3} that
\begin{equation}\label{eq:4.18}
\begin{aligned}
\big\|\partial_{x_1}(\Delta v^{L}_3)\big\|^2_{X_{ed}}=&\left\|e^{\epsilon\nu^{1/2}t}\partial_{x_1}(\Delta v^{L}_3)\right\|^2_{L^\infty L^2}+\nu^{1/2}\left\|e^{\epsilon\nu^{1/2}t}\partial_{x_1}(\Delta v^{L}_3)\right\|^2_{L^2 L^2}\\
&+\nu\left\|e^{\epsilon\nu^{1/2}t}\partial_{x_1}(\nabla\Delta v^{L}_3)\right\|^2_{L^2 L^2}\\
\lesssim &\big\|\partial_{x_1}(\Delta v^{L}_3)(0)\big\|^2_{ L^2}.
\end{aligned}
\end{equation}
Since
$$
\partial_{x_1}(\Delta v_3)=\partial_{x_1}(\Delta v^{NL}_3)+\partial_{x_1}(\Delta v^{L}_3),
$$
then combining \eqref{eq:4.17} and \eqref{eq:4.18}, we obtain
$$
\begin{aligned}
\big\|\partial_{x_1}(\Delta v_3)\big\|^2_{X_{ed}}=&\left\|e^{\epsilon\nu^{1/2}t}\partial_{x_1}(\Delta v_3)\right\|^2_{L^\infty L^2}+\nu^{1/2}\left\|e^{\epsilon\nu^{1/2}t}\partial_{x_1}(\Delta v_3)\right\|^2_{L^2 L^2}\\
&+\nu\left\|e^{\epsilon\nu^{1/2}t}\partial_{x_1}(\nabla\Delta v_3)\right\|^2_{L^2 L^2}\\
\lesssim &\big\|\partial_{x_1}(\Delta v_3)(0)\big\|^2_{ L^2}+\nu^{-1}\left\|e^{\epsilon\nu^{1/2}t}\partial_{x_1}f\right\|^2_{L^2 L^2}.
\end{aligned}
$$
Similarly, one has
$$
\begin{aligned}
\big\|\partial_{x_2}(\Delta v_3)\big\|^2_{X_{ed}}
\lesssim \big\|\partial_{x_2}(\Delta v_3)(0)\big\|^2_{ L^2}+\nu^{-1}\left\|e^{\epsilon\nu^{1/2}t}\partial_{x_2}f\right\|^2_{L^2 L^2}.
\end{aligned}
$$
This completes the proof of Proposition \ref{prop:4.1}.
\end{proof}

\vskip .05in

For the second equation in \eqref{eq:4.1}, one has the following estimate of vorticity.
\begin{proposition}\label{prop:4.2}
Let $\omega_3$ be the solution of \eqref{eq:4.1}, then
$$
\begin{aligned}
\big\|\partial_{x_1}\omega_3\big\|^2_{X_{ed}}
\lesssim &\big\|\partial_{x_1}(\Delta v_3)(0)\big\|^2_{L^2}+\big\|\partial_{x_1}\omega_3(0)\big\|^2_{L^2}\\
&+\nu^{-1}\left(\left\|e^{\epsilon\nu^{1/2}t}\partial_{x_1}f\right\|^2_{L^2L^2}
+\left\|e^{\epsilon\nu^{1/2}t}\partial_{x_1}g\right\|^2_{L^2L^2}\right),
\end{aligned}
$$
and
$$
\begin{aligned}
\big\|\partial_{x_2}\omega_3\big\|^2_{X_{ed}}
\lesssim &\big\|\partial_{x_2}(\Delta v_3)(0)\big\|^2_{L^2}+\big\|\partial_{x_2}\omega_3(0)\big\|^2_{L^2}\\
&+\nu^{-1}\left(\left\|e^{\epsilon\nu^{1/2}t}\partial_{x_2}f\right\|^2_{L^2L^2}
+\left\|e^{\epsilon\nu^{1/2}t}\partial_{x_2}g\right\|^2_{L^2L^2}\right),
\end{aligned}
$$
where $\|\cdot\|_{X_{ed}}$ is defined in \eqref{eq:1.15}.
\end{proposition}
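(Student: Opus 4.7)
The plan is to exploit the same cancellation between $\Delta v_3$ and $\omega_3$ that made the semigroup estimate in Proposition \ref{prop:3.6} work, but now at the inhomogeneous level, so that the "lift-up" source $\delta\sin(m_0 y)\partial_{x_1}v_3+\delta\cos(m_0 y)\partial_{x_2}v_3$ on the right-hand side of the $\omega_3$ equation disappears. Once it disappears, Lemma \ref{lem:2.4} applies directly to a divergence-form inhomogeneity and delivers the $X_{ed}$ bound.

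First, I would rewrite the $\Delta v_3$ equation using $\mathcal{L}=\mathcal{H}+\sin(m_0 y)\partial_{x_1}\Delta^{-1}+\cos(m_0 y)\partial_{x_2}\Delta^{-1}$ (from \eqref{eq:1.11}-\eqref{eq:1.12}) to get
\begin{equation*}
(\partial_t+\mathcal{H})\Delta v_3=\div f-\sin(m_0 y)\partial_{x_1}v_3-\cos(m_0 y)\partial_{x_2}v_3.
\end{equation*}
Multiplying this by $m_0$, adding to the $\omega_3$ equation of \eqref{eq:4.1}, and using $m_0\delta=1$, I find the combined quantity $\Delta v_3+m_0\omega_3$ satisfies
\begin{equation*}
(\partial_t+\mathcal{H})(\Delta v_3+m_0\omega_3)=\div(f+m_0 g),
\end{equation*}
with no lift-up source. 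Applying $\partial_{x_1}$ preserves divergence form and kills the zero mode of both sides (since $\partial_{x_1}P_0=0$), so
\begin{equation*}
(\partial_t+\mathcal{H})\bigl[\partial_{x_1}(\Delta v_3+m_0\omega_3)\bigr]=\div\bigl(\partial_{x_1}f+m_0\partial_{x_1}g\bigr),
\end{equation*}
and Lemma \ref{lem:2.4} yields
\begin{equation*}
\bigl\|\partial_{x_1}(\Delta v_3+m_0\omega_3)\bigr\|_{X_{ed}}^2\lesssim \bigl\|\partial_{x_1}\Delta v_3(0)+m_0\partial_{x_1}\omega_3(0)\bigr\|_{L^2}^2+\nu^{-1}\bigl\|e^{\epsilon\nu^{1/2}t}(\partial_{x_1}f+m_0\partial_{x_1}g)\bigr\|_{L^2L^2}^2.
\end{equation*}

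Next, since $m_0=\delta^{-1}$ is a fixed positive constant (with $\delta>1$), the triangle inequality gives
\begin{equation*}
\|\partial_{x_1}\omega_3\|_{X_{ed}}\leq \delta\,\bigl\|\partial_{x_1}(\Delta v_3+m_0\omega_3)\bigr\|_{X_{ed}}+\delta\,\|\partial_{x_1}\Delta v_3\|_{X_{ed}}.
\end{equation*}
Plugging in the bound just obtained together with Proposition \ref{prop:4.1} to control $\|\partial_{x_1}\Delta v_3\|_{X_{ed}}$, and using $(a+b)^2\leq 2a^2+2b^2$ to split the initial data and source contributions, yields exactly the stated estimate for $\partial_{x_1}\omega_3$. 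The estimate for $\partial_{x_2}\omega_3$ follows by replacing $\partial_{x_1}$ with $\partial_{x_2}$ throughout; no step is sensitive to which horizontal derivative is used.

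I do not expect a genuine obstacle: the only substantive point is spotting the algebraic identity $(\partial_t+\mathcal{H})(\Delta v_3+m_0\omega_3)=\div(f+m_0 g)$, which is the nonlinear analogue of the relation $u+m_0 w$ used in the proof of Proposition \ref{prop:3.6}. Everything downstream is a direct application of Lemma \ref{lem:2.4}, Proposition \ref{prop:4.1}, and the triangle inequality with constants depending only on $\delta$.
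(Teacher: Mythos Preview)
Your proof is correct and essentially identical to the paper's: both form the combination $\partial_{x_1}\Delta v_3+m_0\partial_{x_1}\omega_3$ to cancel the lift-up source, then invoke Lemma~\ref{lem:2.4} and Proposition~\ref{prop:4.1}. One verbal slip: to arrive at $(\partial_t+\mathcal{H})(\Delta v_3+m_0\omega_3)=\div(f+m_0 g)$ you must multiply the $\omega_3$ equation by $m_0$ and add it to the $\Delta v_3$ equation (not the reverse), but the resulting identity you wrote is correct.
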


\begin{proof}
Applying the operator $\partial_{x_1}$ to \eqref{eq:4.1}, we obtain
$$
(\partial_t+\mathcal{L})\partial_{x_1}\Delta v_3=
(\partial_t+\mathcal{H})\partial_{x_1}\Delta v_3\\
+\sin (m_0 y)\partial_{x_1}\partial_{x_1}v_3+\cos (m_0 y)\partial_{x_2}\partial_{x_1}v_3=\div \partial_{x_1} f,
$$
and
$$
(\partial_t+\mathcal{H})\partial_{x_1}\omega_3=\delta\sin(m_0 y)\partial_{x_1}\partial_{x_1}v_3+\delta\cos(m_0 y)\partial_{x_2}\partial_{x_1}v_3+\div \partial_{x_1}g.
$$
Then we have
$$
(\partial_t+\mathcal{H})(\partial_{x_1}\Delta v_3+m_0\partial_{x_1}\omega_3)=\div \partial_{x_1} f+m_0\div \partial_{x_1}g.
$$
By Lemma \ref{lem:2.4} and Proposition \ref{prop:4.1}, one has
$$
\begin{aligned}
\big\|\partial_{x_1}\omega_3\big\|^2_{X_{ed}}
\lesssim &m_0^{-1}\big\|\partial_{x_1}(\Delta v_3)(0)\big\|^2_{L^2}+\big\|\partial_{x_1}\omega_3(0)\big\|^2_{L^2}\\
&+\nu^{-1}m_0^{-1}\left(\left\|e^{\epsilon\nu^{1/2}t}\partial_{x_1}f\right\|^2_{L^2L^2}
+m_0\left\|e^{\epsilon\nu^{1/2}t}\partial_{x_1}g\right\|^2_{L^2L^2}   \right)\\
\lesssim &\big\|\partial_{x_1}(\Delta v_3)(0)\big\|^2_{L^2}+\big\|\partial_{x_1}\omega_3(0)\big\|^2_{L^2}\\
&+\nu^{-1}\left(\left\|e^{\epsilon\nu^{1/2}t}\partial_{x_1}f\right\|^2_{L^2L^2}
+\left\|e^{\epsilon\nu^{1/2}t}\partial_{x_1}g\right\|^2_{L^2L^2}\right).
\end{aligned}
$$
Similarly, one has
$$
\begin{aligned}
\big\|\partial_{x_2}\omega_3\big\|^2_{X_{ed}}
\lesssim &\big\|\partial_{x_2}(\Delta v_3)(0)\big\|^2_{L^2}+\big\|\partial_{x_2}\omega_3(0)\big\|^2_{L^2}\\
&+\nu^{-1}\left(\left\|e^{\epsilon\nu^{1/2}t}\partial_{x_2}f\right\|^2_{L^2L^2}
+\left\|e^{\epsilon\nu^{1/2}t}\partial_{x_2}g\right\|^2_{L^2L^2}\right).
\end{aligned}
$$
This completes the proof of Proposition \ref{prop:4.2}.
\end{proof}

\vskip .05in

\subsection{The estimates of nonlinear terms} Here we get some the estimates of nonlinear terms, first, we need the following lemmas to give the estimate of $v_3$ and $\omega_3$.

\begin{lemma}\label{lem:4.3}
It holds that for $i,j\in \{1,2\}$,
\begin{equation}\label{eq:4.19}
\begin{aligned}
&\left\|\partial_{x_i}\partial_{x_j}(v_i)_{\neq}\right\|_{L^2}+\left\|\partial_{x_j}(v_i)_{\neq}\right\|_{L^2}+
\left\|\partial_{x_i}\partial_{x_j}V\right\|_{L^2}\\
\lesssim&\big\|\partial_{x_1}\omega_3\big\|_{L^2}+\big\|\partial_{x_2}\omega_3\big\|_{L^2}+\big\|(\Delta v_3)_{\neq}\big\|_{L^2},
\end{aligned}
\end{equation}
\vskip .05in
\begin{equation}\label{eq:4.20}
\begin{aligned}
&\left\|\partial_{x_i}\nabla (v_i)_{\neq}\right\|_{L^2}+\left\|\nabla (v_i)_{\neq}\right\|_{L^2}+\big\|\partial_{x_i}\omega\big\|_{L^2}\\
\lesssim&\big\|(\Delta v_3)_{\neq}\big\|_{L^2}+\big\|\partial_{x_1}\nabla\omega_3\big\|_{L^2}+\big\|\partial_{x_2}\nabla\omega_3\big\|_{L^2},
\end{aligned}
\end{equation}
and for $i,j\in \{1,2\}$, one has
\begin{equation}\label{eq:4.21}
\begin{aligned}
&\left\|\partial_{x_i}\partial_{x_j}\nabla(v_i)_{\neq}\right\|_{L^2}
+\left\|\partial_{x_j}\nabla(v_i)_{\neq}\right\|_{L^2}\\
\lesssim&\big\|\partial_{x_1}(\Delta v_3)_{\neq}\big\|_{L^2}+\big\|\partial_{x_2}(\Delta v_3)_{\neq}\big\|_{L^2}+\big\|\partial_{x_1}\nabla\omega_3\big\|_{L^2}+\big\|\partial_{x_2}\nabla\omega_3\big\|_{L^2}.
\end{aligned}
\end{equation}
\end{lemma}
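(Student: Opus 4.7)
The plan is to reduce all three estimates \eqref{eq:4.19}--\eqref{eq:4.21} to a family of elementary Fourier multiplier bounds via the Biot--Savart law. First I would combine the divergence-free condition $\partial_{x_1}v_1+\partial_{x_2}v_2+\partial_y v_3=0$ with the definition $\omega_3=\partial_{x_1}v_2-\partial_{x_2}v_1$ to derive the horizontal Poisson relations
$$
(\partial_{x_1}^2+\partial_{x_2}^2)v_1=-\partial_{x_1}\partial_y v_3-\partial_{x_2}\omega_3,\qquad (\partial_{x_1}^2+\partial_{x_2}^2)v_2=\partial_{x_1}\omega_3-\partial_{x_2}\partial_y v_3.
$$
Writing $\Delta_H=\partial_{x_1}^2+\partial_{x_2}^2$, which annihilates horizontal zero modes and is invertible on the complementary subspace, this yields the Biot--Savart representation
$$
(v_1)_{\neq}=-\Delta_H^{-1}\bigl(\partial_{x_1}\partial_y v_3+\partial_{x_2}\omega_3\bigr),\qquad (v_2)_{\neq}=\Delta_H^{-1}\bigl(\partial_{x_1}\omega_3-\partial_{x_2}\partial_y v_3\bigr).
$$

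Next, in Fourier variables $(k_1,k_2,k_y)$ with $(k_1,k_2)\neq(0,0)$, each left-hand side of \eqref{eq:4.19}--\eqref{eq:4.21} becomes by Plancherel a weighted $\ell^2$ norm whose symbol has the schematic form $\frac{|k_i|^{a}|k_j|^{b}|k_1|\,|k_y|^{c}}{k_1^2+k_2^2}|\hat v_3|$ from the $v_3$-part of Biot--Savart, or $\frac{|k_i|^{a}|k_j|^{b}|k_2|\,|k_y|^{c}}{k_1^2+k_2^2}|\hat\omega_3|$ from the $\omega_3$-part, with exponents $a,b,c$ dictated by how many derivatives have been applied. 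The right-hand side controls $|\widehat{(\Delta v_3)_{\neq}}|=(k_1^2+k_2^2+k_y^2)|\hat v_3|$ and $|\widehat{\partial_{x_1}\omega_3}|+|\widehat{\partial_{x_2}\omega_3}|\ge(|k_1|+|k_2|)|\hat\omega_3|$, together with the $\nabla$-weighted versions thereof for \eqref{eq:4.20}--\eqref{eq:4.21}. The task therefore reduces to showing that each of the resulting ratios of symbols is uniformly bounded.

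The tools I would repeatedly use are the elementary estimates $\frac{|k_ik_j|}{k_1^2+k_2^2}\le 1$ (AM--GM), $\frac{|k_y|^2}{k_1^2+k_2^2+k_y^2}\le 1$, the fact that $|k_1|+|k_2|\ge 1$ whenever $(k_1,k_2)\in\mathbb{Z}^2\setminus\{(0,0)\}$, and the sharper absorbing inequality $|k_j||k_y|\le\tfrac{1}{2}(k_1^2+k_2^2+k_y^2)$ needed to tame cross terms. For the piece $\|\partial_{x_i}\omega\|_{L^2}$ entering \eqref{eq:4.20} I would additionally decompose $\omega_1=\partial_{x_2}v_3-\partial_y v_2$ and $\omega_2=\partial_y v_1-\partial_{x_1}v_3$: the $v_3$-terms are controlled directly by $\|(\Delta v_3)_{\neq}\|_{L^2}$, while the $\partial_y v_1,\partial_y v_2$ terms reduce, via the Biot--Savart formulas above, to exactly the same symbol check as the other quantities. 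The $(v_3)_{\neq}$-contributions to $\|\partial_{x_i}\partial_{x_j}V\|_{L^2}$ are trivial since $|k_i||k_j|\le k_1^2+k_2^2+k_y^2$.

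The main obstacle I anticipate is the symbol $m(k)=\frac{|k_1|\,k_y^2}{(k_1^2+k_2^2)(k_1^2+k_2^2+k_y^2)}$ arising in $\|\partial_y(v_1)_{\neq}\|_{L^2}$ and its higher-order analogues: at first glance a large $|k_y|$ threatens uniform boundedness. The resolution is to split the factor and apply $\frac{|k_1|}{k_1^2+k_2^2}\le 1$ (valid on nonzero integer horizontal modes) together with $\frac{k_y^2}{k_1^2+k_2^2+k_y^2}\le 1$ independently, giving $m\lesssim 1$ without any case analysis. Once every symbol has been shown to be $O(1)$, Plancherel delivers \eqref{eq:4.19}--\eqref{eq:4.21} in a single stroke, the three-term structure of the right-hand sides coming precisely from absorbing each symbol either against $|\widehat{(\Delta v_3)_{\neq}}|$ or against $(|k_1|+|k_2|)|\hat\omega_3|$ (respectively their $\nabla$-weighted counterparts).
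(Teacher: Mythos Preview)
Your proposal is correct and follows essentially the same route as the paper: both derive the horizontal Biot--Savart formulas for $(v_1)_{\neq},(v_2)_{\neq}$ from $\div V=0$ and the definition of $\omega_3$, then reduce every term to a Fourier multiplier bound of the type $\frac{|k_i k_j|}{k_1^2+k_2^2}\le 1$, $|k_j k_y|\lesssim k_1^2+k_2^2+k_y^2$, and $|k_1|+|k_2|\ge 1$ on nonzero horizontal modes. The only cosmetic difference is that the paper writes each estimate in physical-space operator notation (e.g.\ $\|\partial_{x_2}\partial_y(v_3)_{\neq}\|_{L^2}\lesssim\|(\Delta v_3)_{\neq}\|_{L^2}$) rather than explicitly passing to symbols via Plancherel, but the underlying inequalities are identical to the ones you list.
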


\begin{proof}
Recall that $\omega_3=\partial_{x_1}v_2-\partial_{x_2}v_1$ and $\div V=0$, one gets
$$
v_1=-\left(\partial^2_{x_1}+\partial^2_{x_2}\right)^{-1}\left(\partial_{x_2}\omega_3+\partial_{x_1}\partial_y v_3 \right),
$$
and
$$
v_2=\left(\partial^2_{x_1}+\partial^2_{x_2}\right)^{-1}\left(\partial_{x_1}\omega_3-\partial_{x_2}\partial_y v_3\right).
$$

\vskip .05in

\noindent {\bf Step 1. The estimate of \eqref{eq:4.19}.} Since
$$
\sum_{i,j\in \{1,2\}}\big\|\partial_{x_i}\partial_{x_j}(v_i)_{\neq}\big\|_{L^2}
=\left\|\partial^2_{x_1}(v_1)_{\neq}\right\|_{L^2}+\big\|\partial_{x_1}\partial_{x_2}(v_1)_{\neq}\big\|_{L^2}
+\big\|\partial_{x_1}\partial_{x_2}(v_2)_{\neq}\big\|_{L^2}+\big\|\partial^2_{x_2}(v_2)_{\neq}\big\|_{L^2},
$$
$$
\sum_{i,j\in \{1,2\}}\big\|\partial_{x_j}(v_i)_{\neq}\big\|_{L^2}
=\big\|\partial_{x_1}(v_1)_{\neq}\big\|_{L^2}+\big\|\partial_{x_1}(v_2)_{\neq}\big\|_{L^2}
+\big\|\partial_{x_2}(v_1)_{\neq}\big\|_{L^2}
+\big\|\partial_{x_2}(v_2)_{\neq}\big\|_{L^2},
$$
and
$$
\begin{aligned}
\sum_{i,j\in \{1,2\}}\big\|\partial_{x_i}\partial_{x_j}V\big\|_{L^2}
=&\big\|\partial^2_{x_1}v_1\big\|_{L^2}+2\big\|\partial_{x_1}\partial_{x_2}v_1\big\|_{L^2}
+\big\|\partial^2_{x_2}v_1\big\|_{L^2}
+\big\|\partial^2_{x_1}v_2\big\|_{L^2}+2\big\|\partial_{x_1}\partial_{x_2}v_2\big\|_{L^2}\\
&+\big\|\partial^2_{x_2}v_2\big\|_{L^2}+\big\|\partial^2_{x_1}v_3\big\|_{L^2}+2\big\|\partial_{x_1}\partial_{x_2}v_3\big\|_{L^2}
+\big\|\partial^2_{x_2}v_3\big\|_{L^2}.
\end{aligned}
$$
Denoting
$$
I_1=\sum_{i,j\in \{1,2\}}\big\|\partial_{x_i}\partial_{x_j}(v_i)_{\neq}\big\|_{L^2}+\big\|\partial_{x_j}(v_i)_{\neq}\big\|_{L^2}
+\big\|\partial_{x_i}\partial_{x_j}V\big\|_{L^2},
$$
then combining the expressions of $v_1$, $v_2$ and \eqref{eq:1.9}, we have
$$
\begin{aligned}
I_1\lesssim&\big\|\partial^2_{x_1}v_1\big\|_{L^2}+\big\|\partial_{x_1}\partial_{x_2}v_1\big\|_{L^2}
+\big\|\partial^2_{x_2}v_1\big\|_{L^2}
+\big\|\partial_{x_1}v_1\big\|_{L^2}+\big\|\partial_{x_2}v_1\big\|_{L^2}+\big\|\partial^2_{x_1}v_2\big\|_{L^2}
+\big\|\partial_{x_1}\partial_{x_2}v_2\big\|_{L^2}\\
&+\big\|\partial^2_{x_2}v_2\big\|_{L^2}
+\big\|\partial_{x_1}v_2\big\|_{L^2}+\big\|\partial_{x_2}v_2\big\|_{L^2}+\big\|\partial^2_{x_1}v_3\big\|_{L^2}
+\big\|\partial_{x_1}\partial_{x_2}v_3\big\|_{L^2}+\big\|\partial^2_{x_2}v_3\big\|_{L^2}\\
\lesssim&\big\|\partial_{x_1}\omega_3\big\|_{L^2}+\big\|\partial_{x_2}\partial_y v_3\big\|_{L^2}+\big\|\partial_{x_2} \omega_3\big\|_{L^2}+\big\|\partial_{x_1}\partial_y v_3\big\|_{L^2}+\big\|\partial^2_{x_1}v_3\big\|_{L^2}\\
&+\big\|\partial_{x_1}\partial_{x_2}v_3\big\|_{L^2}+\big\|\partial^2_{x_2}v_3\big\|_{L^2}\\
\lesssim&\big\|\partial_{x_1}\omega_3\big\|_{L^2}+\big\|\partial_{x_2}\omega_3\big\|_{L^2}+\big\|\partial_{x_2}\partial_y (v_3)_{\neq}\big\|_{L^2}+\big\|\partial_{x_1}\partial_y (v_3)_{\neq}\big\|_{L^2}\\
&+\big\|\partial_{x_1}\partial_{x_2}(v_3)_{\neq}\big\|_{L^2}+\big\|\partial^2_{x_1}(v_3)_{\neq}\big\|_{L^2}
+\big\|\partial^2_{x_2}(v_3)_{\neq}\big\|_{L^2}\\
\lesssim& \big\|\partial_{x_1}\omega_3\big\|_{L^2}+\big\|\partial_{x_2}\omega_3\big\|_{L^2}+\big\|(\Delta v_3)_{\neq}\big\|_{L^2}.
\end{aligned}
$$

\vskip .05in
\noindent {\bf Step 2. The estimate of \eqref{eq:4.20}.} Similarly, denoting
$$
I_2=\sum_{i\in\{1,2\}}\big\|\partial_{x_i}\nabla (v_i)_{\neq}\big\|_{L^2}+\big\|\nabla (v_i)_{\neq}\big\|_{L^2}+\big\|\partial_{x_i}\omega\big\|_{L^2},
$$
then we have
$$
\begin{aligned}
I_2\lesssim &\big\|\partial_{x_1}\nabla (v_1)_{\neq}\big\|_{L^2}+\big\|\partial_{x_2}\nabla (v_2)_{\neq}\big\|_{L^2}+\big\|\nabla (v_1)_{\neq}\big\|_{L^2}+\big\|\nabla (v_2)_{\neq}\big\|_{L^2}\\
&+\big\|\partial_{x_1}\omega_1\big\|_{L^2}+\big\|\partial_{x_2}\omega_1\big\|_{L^2}
+\big\|\partial_{x_1}\omega_2\big\|_{L^2}+\big\|\partial_{x_2}\omega_2\big\|_{L^2}
+\big\|\partial_{x_1}\omega_3\big\|_{L^2}+\big\|\partial_{x_2}\omega_3\big\|_{L^2}\\
\lesssim&\big\|\partial_{x_1}\nabla (v_1)_{\neq}\big\|_{L^2}+\big\|\partial_{x_2}\nabla (v_1)_{\neq}\big\|_{L^2}
+\big\|\partial_{x_1}\nabla (v_2)_{\neq}\big\|_{L^2}\\
&+\big\|\partial_{x_2}\nabla (v_2)_{\neq}\big\|_{L^2}
+\big\|\partial_{x_1}\nabla V\big\|_{L^2}+\big\|\partial_{x_2}\nabla V\big\|_{L^2}\\
\lesssim&\big\|\partial_{x_1}\nabla (v_1)_{\neq}\big\|_{L^2}+\big\|\partial_{x_2}\nabla (v_1)_{\neq}\big\|_{L^2}
+\big\|\partial_{x_1}\nabla (v_2)_{\neq}\big\|_{L^2}\\
&+\big\|\partial_{x_2}\nabla (v_2)_{\neq}\big\|_{L^2}
+\big\|\partial_{x_1}\nabla (v_3)_{\neq}\big\|_{L^2}+\big\|\partial_{x_2}\nabla (v_3)_{\neq}\big\|_{L^2}\\
\lesssim& \big\| \nabla(\omega_3)_{\neq}\big\|_{L^2}+\big\|\partial_y \nabla (v_3)_{\neq}\big\|_{L^2}
+\big\|\partial_{x_1}\nabla (v_3)_{\neq}\big\|_{L^2}+\big\|\partial_{x_2}\nabla (v_3)_{\neq}\big\|_{L^2}\\
\lesssim&\big\|(\Delta v_3)_{\neq}\big\|_{L^2}+\big\|\partial_{x_1}\nabla\omega_3\big\|_{L^2}+\big\|\partial_{x_2}\nabla\omega_3\big\|_{L^2}.
\end{aligned}
$$

\vskip .05in

\noindent {\bf Step 3. The estimate of \eqref{eq:4.21}.} We denote
$$
I_3=\sum_{i,j\in\{1,2\}}\big\|\partial_{x_i}\partial_{x_j}\nabla(v_i)_{\neq}\big\|_{L^2}
+\big\|\partial_{x_j}\nabla(v_i)_{\neq}\big\|_{L^2},
$$
and by the expressions of $v_1$, $v_2$, \eqref{eq:1.9} and \eqref{eq:4.19}, we have
$$
\begin{aligned}
I_3=&\big\|\partial^2_{x_1}\nabla(v_1)_{\neq}\big\|_{L^2}+\big\|\partial_{x_1}\partial_{x_2}\nabla(v_1)_{\neq}\big\|_{L^2}
+\big\|\partial_{x_1}\nabla(v_1)_{\neq}\big\|_{L^2}+\big\|\partial_{x_2}\nabla(v_1)_{\neq}\big\|_{L^2}\\
&+\big\|\partial_{x_1}\partial_{x_2}\nabla(v_2)_{\neq}\big\|_{L^2}+\big\|\partial^2_{x_2}\nabla(v_2)_{\neq}\big\|_{L^2}
+\big\|\partial_{x_1}\nabla(v_2)_{\neq}\big\|_{L^2}+\big\|\partial_{x_2}\nabla(v_2)_{\neq}\big\|_{L^2}\\
\lesssim&\big\|\partial_{x_1}\nabla \omega_3\big\|_{L^2}+\big\|\partial_{x_2}\nabla \omega_3\big\|_{L^2}
+\big\|\partial_{x_1}\nabla\partial_y v_3\big\|_{L^2}+\big\|\partial_{x_2}\nabla\partial_y v_3\big\|_{L^2}\\
\lesssim&\big\|\partial_{x_1}\nabla \omega_3\big\|_{L^2}+\big\|\partial_{x_2}\nabla \omega_3\big\|_{L^2}
+\big\|\partial_{x_1}(\Delta v_3)_{\neq}\big\|_{L^2}+\big\|\partial_{x_2}(\Delta v_3)_{\neq}\big\|_{L^2}.
\end{aligned}
$$
This completes the proof of Lemma \ref{lem:4.3}.
\end{proof}

\begin{remark}
Using \eqref{eq:1.9}, we know that $\partial_{x_i}(v_j)_{\neq}=\partial_{x_i}v_j,i=1,2$. In the discussion of Lemma {\rm \ref{lem:4.3}} and later section, both symbols are used for the convenience of proof.
\end{remark}

Next, we obtain some estimates for nonlinear term, the details are as follows.

\begin{lemma}\label{lem:4.4}
For $i\in \{1,2\}$, it holds that
$$
\begin{aligned}
&\big\|\partial_{x_i}(\Delta p)\big\|_{L^2}+\big\|\partial_{x_i}\nabla(V\cdot\nabla v_3)\big\|_{L^2}
+\big\|\partial_{x_i}(\omega_3V)\big\|_{L^2}+\big\|\partial_{x_i}(\omega v_3)\big\|_{L^2}\\
\lesssim&\big(\big\|\partial_{x_1}(\Delta v_3)_{\neq}\big\|_{L^2}+\big\|\partial_{x_2}(\Delta v_3)_{\neq}\big\|_{L^2}+\big\|\partial_{x_1}\omega_3\big\|_{L^2}+\big\|\partial_{x_2}\omega_3\big\|_{L^2}\big)\\
&\ \ \cdot\big(\big\|V\big\|_{H^2}+\big\|\partial_{x_1}V\big\|_{H^2}+\big\|\partial_{x_2}V\big\|_{H^2}\big)\\
&+\big(\big\|\nabla\partial_{x_1}\omega_3\big\|_{L^2}+\big\|\nabla\partial_{x_2}\omega_3\big\|_{L^2}\big)
\big(\big\|v_3\big\|_{H^2}+\big\|\partial_{x_1}v_3\big\|_{H^2}+\big\|\partial_{x_2}v_3\big\|_{H^2}\big)\\
&+\big(\big\|\partial_{x_1}\omega_3\big\|_{L^2}+\big\|\partial_{x_2}\omega_3\big\|_{L^2}+\big\|(\Delta v_3)_{\neq}\big\|_{L^2}\big)\\
&\cdot\big(\big\|\nabla\Delta v_3\big\|_{L^2}+\big\|\partial_{x_1}\nabla\Delta v_3\big\|_{L^2}+\big\|\partial_{x_2}\nabla\Delta v_3\big\|_{L^2}\big),
\end{aligned}
$$

\vskip .05in

$$
\begin{aligned}
\big\|\Delta p\big\|_{L^2}+\big\|\nabla(V\cdot\nabla v_3)\big\|_{L^2}\lesssim
&\big(\big\|\nabla\partial_{x_1}\omega_3\big\|_{L^2}+\big\|\nabla\partial_{x_2}\omega_3\big\|_{L^2}
+\big\|\nabla\Delta v_3\big\|_{L^2}\big)\big\|\Delta v_3\big\|_{L^2}\\
&+\big(\big\|\partial_{x_1}\omega_3\big\|_{L^2}+\big\|\partial_{x_2}\omega_3\big\|_{L^2}+\big\|(\Delta v_3)_{\neq}\big\|_{L^2})\\
&\cdot\big(\big\|V\big\|_{H^2}+\big\|\nabla\Delta v_3\big\|_{L^2}\big),
\end{aligned}
$$

\vskip .05in

$$
\big\|\nabla(V\cdot\nabla V)_{\neq}\big\|_{L^2}+\big\|V_{\neq}\cdot\nabla V_{\neq}\big\|_{H^1}\lesssim\big\|V\big\|_{H^2}\big\|\Delta V_{\neq}\big\|_{L^2},
$$
and
$$
\big\|\partial_{x_i}\nabla(V\cdot\nabla V)\big\|_{L^2}+\big\|\partial_{x_i}\nabla(V_{\neq}\cdot\nabla V_{\neq})\big\|_{L^2}
\lesssim \big\|\partial_{x_i}V\big\|_{H^2}\big\|\Delta V_{\neq}\big\|_{L^2}+\big\|V\big\|_{H^2}\big\|\partial_{x_i}\Delta V_{\neq}\big\|_{L^2}.
$$
\end{lemma}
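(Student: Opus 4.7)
All four groups of estimates reduce, via the Leibniz rule applied to $\Delta p=-\sum_{i,j=1}^{3}\partial_iv_j\,\partial_jv_i$, $V\cdot\nabla v_3=\sum_{j=1}^{3}v_j\partial_jv_3$, $\omega_3V=\omega_3(v_1,v_2,v_3)$ and $\omega v_3=v_3(\omega_1,\omega_2,\omega_3)$, to bilinear bounds of the form $\|fg\|_{L^2}$ or $\|f\,\partial_{x_i}g\|_{L^2}$. Three tools do the work: Lemma~\ref{lem:2.5} controls every product in which one factor carries a horizontal derivative; the identities $P_{\neq}\partial_{x_1}=\partial_{x_1}$ and $P_{\neq}\partial_{x_2}=\partial_{x_2}$ in \eqref{eq:1.9} let us attach a $P_{\neq}$ to any factor hit by such a derivative; and Lemma~\ref{lem:4.3} converts $L^2$-norms of $v_1$, $v_2$ and of $\omega_1$, $\omega_2$ (with or without gradient) into the $v_3$, $\omega_3$ quantities on the right-hand side, while the Sobolev embedding $H^2(\mathbb{T}^3)\hookrightarrow L^\infty$ handles the rare summand without any horizontal derivative.

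\textbf{The main steps.} For the first block I would apply $\partial_{x_i}$ to each product and expand with Leibniz. A typical summand is $(\partial_{x_i}\partial_a v_b)\cdot\partial_b v_a$, or $(\partial_{x_i}\omega_3)\cdot v_a$, or $\omega_3\cdot\partial_{x_i}v_a$; each is bounded by a single call to Lemma~\ref{lem:2.5}, pairing the factor that receives two derivatives (yielding an $\|\Delta v_3\|_{L^2}$, an $\|\partial_{x_i}\Delta v_3\|_{L^2}$, or an $H^2$-norm of $V$) against the $L^2$ norm of the other. When a zero-mode factor appears, \eqref{eq:1.9} annihilates the $\partial_{x_i}f_1$ contribution in Lemma~\ref{lem:2.5}, leaving only the harmless $\|f_1\|_{L^2}\lesssim\|V\|_{H^2}$ piece, and Lemma~\ref{lem:4.3} then sweeps any residual $v_1$, $v_2$, $\omega_1$, $\omega_2$ norms into the stated right-hand side. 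The undifferentiated bound on $\|\Delta p\|_{L^2}+\|\nabla(V\cdot\nabla v_3)\|_{L^2}$ is the same argument with one fewer outer derivative, using the simpler first part \eqref{eq:4.19}--\eqref{eq:4.20} of Lemma~\ref{lem:4.3}. The two purely $V$-$V$ estimates expand $V\cdot\nabla V=\sum_j v_j\partial_j V$: every term carries at least one horizontal derivative (either already present, or created by the outer $\partial_{x_i}$, or supplied by the $\partial_j$ acting on a non-zero mode), which unlocks Lemma~\ref{lem:2.5} with the pairings $\|\partial_{x_i}V\|_{H^2}\|\Delta V_{\neq}\|_{L^2}$ and $\|V\|_{H^2}\|\partial_{x_i}\Delta V_{\neq}\|_{L^2}$ appearing on the right; the Poincar\'e-type inequality $\|\nabla V_{\neq}\|_{L^2}\lesssim\|\Delta V_{\neq}\|_{L^2}$ absorbs any leftover gradients.

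\textbf{Main obstacle.} There is no single hard analytic step; the real difficulty is bookkeeping. Each product expands into many cross-terms, and for each one I must choose the correct pairing in Lemma~\ref{lem:2.5} so that the factor absorbing two derivatives matches one of the $H^2$-type quantities on the right-hand side while the residual $L^2$ factor matches one of the $(\Delta v_3)_{\neq}$, $\partial_{x_j}\omega_3$ or $\partial_{x_j}\Delta v_3$ quantities. The $P_{\neq}$ decorations required on the right are enforced automatically by \eqref{eq:1.9}, which guarantees that every horizontally-differentiated factor is a non-zero mode; the remaining care is to invoke the correct clause of Lemma~\ref{lem:4.3} (first, second, or third) depending on how many derivatives survive on the $v_1, v_2, \omega$ pieces that need to be translated.
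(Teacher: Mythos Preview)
Your proposal is correct and matches the paper's approach closely: the proof there is precisely this case-by-case bookkeeping, organized into five steps (pressure, $\nabla(V\cdot\nabla v_3)$, $\omega_3 V$, $\omega v_3$, and $V\cdot\nabla V$), using Lemma~\ref{lem:2.5} together with Lemma~\ref{lem:4.3} for terms carrying a horizontal derivative and Sobolev embeddings ($L^4\times L^4$ or $L^\infty\times L^2$) for the $j=3$ summands with no horizontal derivative available. One small deviation: for the last two $V\cdot\nabla V$ estimates the paper does not invoke Lemma~\ref{lem:2.5} but instead uses the zero/non-zero decomposition $(V\cdot\nabla V)_{\neq}=(V\cdot\nabla V_{\neq})_{\neq}+V_{\neq}\cdot\nabla P_0V$ followed by the standard algebra bound $\|fg\|_{H^1}\lesssim\|f\|_{H^2}\|g\|_{H^1}$, which is slightly quicker than the pairing argument you sketch; either route works.
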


\begin{proof}
We finish the proof by following five steps.

\vskip .05in

\noindent {\bf Step 1. The estimate of pressure $p$.} Recall that
\begin{equation}\label{eq:4.22}
\Delta p=-\sum_{i,j=1}^{3}\partial_iv_j\partial_jv_i
=-\sum_{i,j=1}^{3}\partial_iv_j\partial_j(v_i)_{\neq}-\sum_{i,j=1}^{3}\partial_iv_j\partial_jP_0v_i,
\end{equation}
where $\partial_i=\partial_{x_i}, i=1,2$, $\partial_3=\partial_y$. For $i,j\in \{1,2\}$, by Lemma \ref{lem:2.5} and \ref{lem:4.3}, one gets
$$
\begin{aligned}
\big\|\partial_iv_j\partial_j(v_i)_{\neq}\big\|_{L^2}&\lesssim \big(\big\|\partial_i\partial_j(v_i)_{\neq}\big\|_{L^2}+\big\|\partial_j(v_i)_{\neq}\big\|_{L^2}\big)\big\|\Delta v_j\big\|_{L^2}\\
&\lesssim \big(\big\|(\Delta v_3)_{\neq}\big\|_{L^2}+\big\|\partial_{x_1}\omega_3\big\|_{L^2}+\big\|\partial_{x_2}\omega_3\big\|_{L^2}\big)
\big\|V\big\|_{H^2}.
\end{aligned}
$$
For $i\in \{1,2\}, j=3$, by Lemma \ref{lem:2.5} and \ref{lem:4.3}, one has
$$
\begin{aligned}
\big\|\partial_iv_j\partial_j(v_i)_{\neq}\big\|_{L^2}&\lesssim \big(\big\|\partial_i\partial_j(v_i)_{\neq}\big\|_{L^2}+\big\|\partial_j(v_i)_{\neq}\big\|_{L^2}\big)\big\|\Delta v_j\big\|_{L^2}\\
&\lesssim \big(\big\|(\Delta v_3)_{\neq}\big\|_{L^2}+\big\|\nabla\partial_{x_1}\omega_3\big\|_{L^2}+\big\|\nabla\partial_{x_2}\omega_3\big\|_{L^2}\big)
\big\|\Delta v_3\big\|_{L^2}.
\end{aligned}
$$
For $i=3$, we imply that
$$
\big\|\partial_iv_j\partial_j(v_i)_{\neq}\big\|_{L^2}\lesssim\big\|\partial_iv_j\big\|_{L^4}
\big\|\partial_j(v_i)_{\neq}\big\|_{L^4}
\lesssim \big\|V\big\|_{H^2}\big\|(\Delta v_3)_{\neq}\big\|_{L^2}.
$$
Then we have
\begin{equation}\label{eq:4.23}
\begin{aligned}
\sum_{i,j=1}^{3}\big\|\partial_iv_j\partial_j(v_i)_{\neq}\big\|_{L^2}
\lesssim&\big(\big\|(\Delta v_3)_{\neq}\big\|_{L^2}+\big\|\partial_{x_1}\omega_3\big\|_{L^2}+\big\|\partial_{x_2}\omega_3\big\|_{L^2}\big)
\big\|V\big\|_{H^2}\\
&+\big(\big\|\nabla\partial_{x_1}\omega_3\big\|_{L^2}+\big\|\nabla\partial_{x_2}\omega_3\big\|_{L^2}\big)\big\|\Delta v_3\big\|_{L^2}.
\end{aligned}
\end{equation}
Since
$$
\sum_{i,j=1}^{3}\partial_iv_j\partial_jP_0v_i=\sum_{i,j=1}^{3}P_0\left( \partial_iv_j\partial_jP_0v_i\right)+\sum_{i,j=1}^{3}\partial_i(v_j)_{\neq}\partial_jP_0v_i.
$$
Similar to \eqref{eq:4.23}, one has
\begin{equation}\label{eq:4.24}
\begin{aligned}
\sum_{i,j=1}^{3}\big\|(\partial_iv_j\partial_jP_0v_i)_{\neq}\big\|_{L^2}
&=\sum_{i,j=1}^{3}\big\|\partial_i(v_j)_{\neq}\partial_jP_0v_i\big\|_{L^2}\\
&\lesssim\big(\big\|(\Delta v_3)_{\neq}\big\|_{L^2}+\big\|\partial_{x_1}\omega_3\big\|_{L^2}+\big\|\partial_{x_2}\omega_3\big\|_{L^2}\big)
\big\|P_0V\big\|_{H^2}\\
&\ \ \ +\big(\big\|\nabla\partial_{x_1}\omega_3\big\|_{L^2}+\big\|\nabla\partial_{x_2}\omega_3\big\|_{L^2}\big)
\big\|P_0v_3\big\|_{H^2}\\
&\lesssim\big(\big\|(\Delta v_3)_{\neq}\big\|_{L^2}+\big\|\partial_{x_1}\omega_3\big\|_{L^2}+\big\|\partial_{x_2}\omega_3\big\|_{L^2}\big)
\big\|V\big\|_{H^2}\\
&\ \ \ +\big(\big\|\nabla\partial_{x_1}\omega_3\big\|_{L^2}+\big\|\nabla\partial_{x_2}\omega_3\big\|_{L^2}\big)
\big\|v_3\big\|_{H^2},
\end{aligned}
\end{equation}
and
\begin{equation}\label{eq:4.25}
\sum_{i,j=1}^3\big\|P_0\left( \partial_iv_j\partial_jP_0v_i\right)\big\|_{L^2}\lesssim \big\|\partial_yP_0v_3\big\|^2_{L^4}
\lesssim \big\|\Delta v_3\big\|_{L^2}\big\|\nabla\Delta v_3\big\|_{L^2}.
\end{equation}
Combining \eqref{eq:4.22}-\eqref{eq:4.25}, to obtain
\begin{equation}\label{eq:4.26}
\begin{aligned}
\big\|\Delta p\big\|_{L^2}\lesssim
&\big(\big\|(\Delta v_3)_{\neq}\big\|_{L^2}+\big\|\partial_{x_1}\omega_3\big\|_{L^2}+\big\|\partial_{x_2}\omega_3\big\|_{L^2}\big)
\big\|V\big\|_{H^2}\\
&+\big(\big\|\nabla\partial_{x_1}\omega_3\big\|_{L^2}+\big\|\nabla\partial_{x_2}\omega_3\big\|_{L^2}+\big\|\nabla\Delta v_3\big\|_{L^2}\big)\big\|\Delta v_3\big\|_{L^2}.
\end{aligned}
\end{equation}
Next, we consider
\begin{equation}\label{eq:4.27}
\begin{aligned}
\partial_{x_1}\Delta p
&=-\sum_{i,j=1}^{3}\partial_{x_1}(\partial_iv_j\partial_jv_i)\\
&=-\sum_{i,j=1}^{3}\partial_i\partial_{x_1}v_j\partial_j(v_i)_{\neq}
-\sum_{i,j=1}^{3}\partial_iv_j\partial_j\partial_{x_1}(v_i)_{\neq}
-\sum_{i,j=1}^{3}\partial_i\partial_{x_1}v_j\partial_jP_0v_i.
\end{aligned}
\end{equation}
For $i,j\in \{1,2\}$, we deduce by Lemma \ref{lem:2.5} and \ref{lem:4.3} that
$$
\begin{aligned}
\big\|\partial_i\partial_{x_1}v_j\partial_j(v_i)_{\neq}\big\|_{L^2}&\lesssim \big(\big\|\partial_i\partial_j(v_i)_{\neq}\big\|_{L^2}+\big\|\partial_j(v_i)_{\neq}\big\|_{L^2}\big)
\big\|\partial_{x_1}\Delta v_j\big\|_{L^2}\\
&\lesssim \big(\big\|(\Delta v_3)_{\neq}\big\|_{L^2}+\big\|\partial_{x_1}\omega_3\big\|_{L^2}+\big\|\partial_{x_2}\omega_3\big\|_{L^2}\big)
\big\|\partial_{x_1}V\big\|_{H^2},
\end{aligned}
$$
and
$$
\begin{aligned}
\big\|\partial_iv_j\partial_j\partial_{x_1}(v_i)_{\neq}\big\|_{L^2}
&\lesssim \big(\big\|(\Delta v_3)_{\neq}\big\|_{L^2}+\big\|\partial_{x_1}\omega_3\big\|_{L^2}+\big\|\partial_{x_2}\omega_3\big\|_{L^2}\big)
\big\|\partial_{x_1}V\big\|_{H^2}.
\end{aligned}
$$
For $i\in \{1,2\}, j=3$, by Lemma \ref{lem:2.5} and \ref{lem:4.3}, one has
$$
\begin{aligned}
\big\|\partial_i\partial_{x_1}v_j\partial_j(v_i)_{\neq}\big\|_{L^2}&\lesssim \big(\big\|\partial_i\partial_j(v_i)_{\neq}\big\|_{L^2}+\big\|\partial_j(v_i)_{\neq}\big\|_{L^2}\big)
\big\|\partial_{x_1}\Delta v_j\big\|_{L^2}\\
&\lesssim \big(\big\|(\Delta v_3)_{\neq}\big\|_{L^2}+\big\|\nabla\partial_{x_1}\omega_3\big\|_{L^2}+\big\|\nabla\partial_{x_2}\omega_3\big\|_{L^2}\big)
\big\|\partial_{x_1}\Delta v_3\big\|_{L^2},
\end{aligned}
$$

$$
\begin{aligned}
\big\|\partial_iv_j\partial_j\partial_{x_1}(v_i)_{\neq}\big\|_{L^2}&\lesssim \big(\big\|\partial_i\partial_j\partial_{x_1}(v_i)_{\neq}\big\|_{L^2}+\big\|\partial_j\partial_{x_1}(v_i)_{\neq}\big\|_{L^2}\big)
\big\|\Delta v_j\big\|_{L^2}\\
&\lesssim\big(\big\|\partial_i\partial_{x_1}\nabla(v_i)_{\neq}\big\|_{L^2}+\big\|\partial_{x_1}\nabla(v_i)_{\neq}\big\|_{L^2}\big)
\big\|\Delta v_j\big\|_{L^2}\\
&\lesssim \big(\big\|\partial_{x_1}(\Delta v_3)_{\neq}\big\|_{L^2}+\big\|\partial_{x_2}(\Delta v_3)_{\neq}\big\|_{L^2}+\big\|\nabla\partial_{x_1}\omega_3\big\|_{L^2}+\big\|\nabla\partial_{x_2}\omega_3\big\|_{L^2}\big)
\big\|\Delta v_3\big\|_{L^2},
\end{aligned}
$$
and
$$
\big\|\partial_i\partial_{x_1}v_j\partial_jP_0v_i\big\|_{L^2}\lesssim \big\|(\Delta v_3)_{\neq}\big\|_{L^2}\big\|P_0 V\big\|_{H^2}
\lesssim \big\|(\Delta v_3)_{\neq}\big\|_{L^2}\big\|V\big\|_{H^2}.
$$
For $i=3$, we deduce by energy inequalities that
$$
\big\|\partial_i\partial_{x_1}v_j\partial_j(v_i)_{\neq}\big\|_{L^2}
\lesssim\big\|\partial_i\partial_{x_1}v_j\big\|_{L^4}\big\|\partial_j(v_i)_{\neq}\big\|_{L^4}
\lesssim \big\|\partial_{x_1}V\big\|_{H^2}\big\|(\Delta v_3)_{\neq}\big\|_{L^2},
$$

$$
\big\|\partial_iv_j\partial_j\partial_{x_1}(v_i)_{\neq}\big\|_{L^2}
\lesssim\big\|\partial_iv_j\big\|_{L^4}\big\|\partial_j\partial_{x_1}(v_i)_{\neq}\big\|_{L^4}
\lesssim \big\|V\big\|_{H^2}\big\|\partial_{x_1}(\Delta v_3)_{\neq}\big\|_{L^2},
$$
and
$$
\big\|\partial_i\partial_{x_1}v_j\partial_jP_0v_i\big\|_{L^2}
\lesssim\big(\big\|\partial_{x_1}(\Delta v_3)_{\neq}\big\|_{L^2}+\big\|\partial_{x_2}(\Delta v_3)_{\neq}\big\|_{L^2}+\big\|\nabla\partial_{x_1}\omega_3\big\|_{L^2}+\big\|\nabla\partial_{x_2}\omega_3\big\|_{L^2}\big)
\big\|\Delta v_3\big\|_{L^2}.
$$
Then we have from \eqref{eq:4.27} that
\begin{equation}\label{eq:4.28}
\begin{aligned}
&\big\|\partial_{x_1}\Delta p\big\|_{L^2}\\
\lesssim&\sum_{i,j=1}^{3}\big\|\partial_i\partial_{x_1}v_j\partial_j(v_i)_{\neq}\big\|_{L^2}
+\sum_{i,j=1}^{3}\big\|\partial_iv_j\partial_j\partial_{x_1}(v_i)_{\neq}\big\|_{L^2}
+\sum_{i,j=1}^{3}\big\|\partial_i\partial_{x_1}v_j\partial_jP_0v_i\big\|_{L^2}\\
\lesssim& \big(\big\|\partial_{x_1}(\Delta v_3)_{\neq}\big\|_{L^2}+\big\|\partial_{x_2}(\Delta v_3)_{\neq}\big\|_{L^2}+\big\|\nabla\partial_{x_1}\omega_3\big\|_{L^2}+\big\|\nabla\partial_{x_2}\omega_3\big\|_{L^2}\big)
\big\|\Delta v_3\big\|_{L^2}\\
&+\big(\big\|(\Delta v_3)_{\neq}\big\|_{L^2}+\big\|\partial_{x_1}\omega_3\big\|_{L^2}+\big\|\partial_{x_2}\omega_3\big\|_{L^2}\big)
\big\|\partial_{x_1}V\big\|_{H^2}+\big\|V\big\|_{H^2}\big\|\partial_{x_1}(\Delta v_3)_{\neq}\big\|_{L^2}\\
&+\big\|\partial_{x_1}V\big\|_{H^2}\big\|(\Delta v_3)_{\neq}\big\|_{L^2}+\big(\big\|(\Delta v_3)_{\neq}\big\|_{L^2}+\big\|\nabla\partial_{x_1}\omega_3\big\|_{L^2}+\big\|\nabla\partial_{x_2}\omega_3\big\|_{L^2}\big)
\big\|\partial_{x_1}\Delta v_3\big\|_{L^2}\\
&+\big\|(\Delta v_3)_{\neq}\big\|_{L^2}\big\|V\big\|_{H^2}\\
\lesssim&\big(\big\|\partial_{x_1}(\Delta v_3)_{\neq}\big\|_{L^2}+\big\|\partial_{x_2}(\Delta v_3)_{\neq}\big\|_{L^2}+\big\|\partial_{x_1}\omega_3\big\|_{L^2}+\big\|\partial_{x_2}\omega_3\big\|_{L^2}\big)
\cdot\big(\big\|V\big\|_{H^2}+\big\|\partial_{x_1}V\big\|_{H^2}\big)\!\!\!\\
&+\big(\big\|\nabla\partial_{x_1}\omega_3\big\|_{L^2}+\big\|\nabla\partial_{x_2}\omega_3\big\|_{L^2}\big)\cdot
\big(\big\|\Delta v_3\big\|_{L^2}+\big\|\partial_{x_1}\Delta v_3\big\|_{L^2}\big).
\end{aligned}
\end{equation}
Similarly, we also have
\begin{equation}\label{eq:4.29}
\begin{aligned}
\big\|\partial_{x_2}\Delta p\big\|_{L^2}\lesssim&\big(\big\|\partial_{x_1}(\Delta v_3)_{\neq}\big\|_{L^2}+\big\|\partial_{x_2}(\Delta v_3)_{\neq}\big\|_{L^2}+\big\|\partial_{x_1}\omega_3\big\|_{L^2}+\big\|\partial_{x_2}\omega_3\big\|_{L^2}\big)\\
&\cdot\big(\big\|V\big\|_{H^2}+\big\|\partial_{x_2}V\big\|_{H^2}\big)\\
&+\big(\big\|\nabla\partial_{x_1}\omega_3\big\|_{L^2}+\big\|\nabla\partial_{x_2}\omega_3\big\|_{L^2}\big)
\cdot\big(\big\|\Delta v_3\big\|_{L^2}+\big\|\partial_{x_2}\Delta v_3\big\|_{L^2}\big).
\end{aligned}
\end{equation}
\vskip .05in

\noindent {\bf Step 2. The estimate of $\nabla(V\cdot\nabla v_3)$.}  We write
\begin{equation}\label{eq:4.30}
\nabla(V\cdot\nabla v_3)=\sum_{j=1}^3\nabla\left(v_j\partial_j v_3 \right)
=\sum_{j=1}^3\nabla\left((v_j)_{\neq}\partial_j v_3 \right)+\sum_{j=1}^3\nabla\left(P_0v_j\partial_j v_3 \right).
\end{equation}
For $j\in \{1,2\}$, by Lemma  \ref{lem:2.5} and \ref{lem:4.3}, one gets
$$
\begin{aligned}
\big\|\nabla\left((v_j)_{\neq}\partial_j v_3 \right)\big\|_{L^2}
&\lesssim \big\|\nabla(v_j)_{\neq}\partial_j v_3 \big\|_{L^2}+\big\|(v_j)_{\neq}\nabla\partial_j v_3 \big\|_{L^2}\\
&\lesssim \big(\big\|\partial_j\nabla(v_j)_{\neq}\big\|_{L^2}+\big\|\nabla(v_j)_{\neq}\big\|_{L^2} \big)
\big\|\Delta v_3\big\|_{L^2}\\
&\ \ \ +\big(\big\|\partial_j(v_j)_{\neq}\big\|_{L^2}+\big\|(v_j)_{\neq}\big\|_{L^2}\big)\big\|\nabla\Delta v_3\big\|_{L^2}\\
&\lesssim \big(\big\|(\Delta v_3)_{\neq}\big\|_{L^2}+\big\|\partial_{x_1}\nabla\omega_3\big\|_{L^2}+\big\|\partial_{x_2}\nabla\omega_3\big\|_{L^2}\big)
\big\|\Delta v_3\big\|_{L^2}\\
&\ \ \ +\big(\big\|\partial_{x_1}\omega_3\big\|_{L^2}+\big\|\partial_{x_2}\omega_3\big\|_{L^2}+\big\|(\Delta v_3)_{\neq}\big\|_{L^2}\big)\big\|\nabla\Delta v_3\big\|_{L^2}.
\end{aligned}
$$
For $j=3$, one has that
$$
\begin{aligned}
\big\|\nabla\left((v_j)_{\neq}\partial_j v_3 \right)\big\|_{L^2}
&\lesssim \big\|\nabla(v_j)_{\neq}\partial_j v_3\big\|_{L^2}+\big\|(v_j)_{\neq}\partial_j\nabla v_3\big\|_{L^2}\\
&\lesssim \big\|(v_j)_{\neq}\|_{H^2}\big\|\partial_jv_3\big\|_{H^1}
\lesssim \big\|(\Delta v_3)_{\neq}\big\|_{L^2}\big\|\Delta v_3\big\|_{L^2}.
\end{aligned}
$$
Then we show that
\begin{equation}\label{eq:4.31}
\begin{aligned}
\sum_{j=1}^3\big\|\nabla\left((v_j)_{\neq}\partial_j v_3 \right)\big\|_{L^2}
\lesssim&\big(\big\|\partial_{x_1}\omega_3\big\|_{L^2}+\big\|\partial_{x_2}\omega_3\big\|_{L^2}+\big\|(\Delta v_3)_{\neq}\big\|_{L^2}\big)\big\|\nabla\Delta v_3\big\|_{L^2}\\
&+\big(\big\|\nabla\partial_{x_1}\omega_3\big\|_{L^2}+\big\|\nabla\partial_{x_2}\omega_3\big\|_{L^2}\big)
\big\|v_3\big\|_{H^2}.
\end{aligned}
\end{equation}
Since
$$
\sum_{j=1}^3\nabla\left(P_0v_j\partial_j v_3 \right)
=\sum_{j=1}^3\nabla\left(P_0v_j\partial_j (v_3)_{\neq}\right)+\partial_y\left(P_0v_3\partial_y P_0v_3 \right),
$$
where we use the $\partial_{x_1}P_0=\partial_{x_2}P_0=0$. By energy inequalities, one has
$$
\big\|\nabla\left(P_0v_j\partial_j (v_3)_{\neq} \right)\big\|_{L^2}
\lesssim \big\|P_0v_j\big\|_{H^2}\big\|\partial_j (v_3)_{\neq}\big\|_{H^1}
\lesssim \big\|V\big\|_{H^2}\big\|(\Delta v_3)_{\neq}\big\|_{L^2},
$$
and
$$
\big\|\partial_y\left(P_0v_3\partial_y  P_0v_3 \right)\big\|_{L^2}\lesssim \big\|P_0v_3\big\|_{H^1}\big\|\partial_y  P_0v_3 \big\|_{H^2}
\lesssim \big\|v_3\big\|_{H^2}\big\|\nabla\Delta v_3\big\|_{L^2}.
$$
Then we obtain
\begin{equation}\label{eq:4.32}
\sum_{j=1}^3\big\|\nabla\left(P_0v_j\partial_j v_3 \right)\big\|_{L^2}
\lesssim\big\|V\big\|_{H^2}\big\|(\Delta v_3)_{\neq}\big\|_{L^2}+\big\|v_3\big\|_{H^2}\big\|\nabla\Delta v_3\big\|_{L^2}.
\end{equation}
Combining \eqref{eq:4.30}-\eqref{eq:4.32}, we have
\begin{equation}\label{eq:4.33}
\begin{aligned}
\big\|\nabla(V\cdot\nabla v_3)\big\|_{L^2}
\lesssim&\big(\big\|\partial_{x_1}\omega_3\big\|_{L^2}+\big\|\partial_{x_2}\omega_3\big\|_{L^2}+\big\|(\Delta v_3)_{\neq}\big\|_{L^2}\big)\big\|\nabla\Delta v_3\big\|_{L^2}\\
&+\big(\big\|\nabla\partial_{x_1}\omega_3\big\|_{L^2}+\big\|\nabla\partial_{x_2}\omega_3\big\|_{L^2}\big)
\big\|v_3\big\|_{H^2}\\
&+\big\|V\big\|_{H^2}\big\|(\Delta v_3)_{\neq}\big\|_{L^2}+\big\|v_3\big\|_{H^2}\big\|\nabla\Delta v_3\big\|_{L^2}\\
\lesssim&\big(\big\|\partial_{x_1}\omega_3\big\|_{L^2}+\big\|\partial_{x_2}\omega_3\big\|_{L^2}+\big\|(\Delta v_3)_{\neq}\big\|_{L^2}\big)\big(\big\|V\big\|_{H^2}+\big\|\nabla\Delta v_3\big\|_{L^2}\big)\\
&+\big(\big\|\nabla\partial_{x_1}\omega_3\big\|_{L^2}+\big\|\nabla\partial_{x_2}\omega_3\big\|_{L^2}\big)
\big\|v_3\big\|_{H^2}+\big\|v_3\big\|_{H^2}\big\|\nabla\Delta v_3\big\|_{L^2}.
\end{aligned}
\end{equation}
Next, we consider
\begin{equation}\label{eq:4.34}
\partial_{x_1}\nabla(V\cdot\nabla v_3)=\sum_{j=1}^3\nabla\partial_{x_1}\left((v_j)_{\neq}\partial_j v_3 \right)+\sum_{j=1}^3\nabla\left(P_0v_j\partial_j\partial_{x_1} (v_3)_{\neq}\right).
\end{equation}
Since
$$
\begin{aligned}
J_1=&\sum_{j=1}^3\big\|\nabla\partial_{x_1}\left((v_j)_{\neq}\partial_j v_3 \right)\big\|_{L^2}\\
\lesssim&\sum_{j=1}^3\big\|\nabla\partial_{x_1}(v_j)_{\neq}\partial_j v_3\big\|_{L^2}+\sum_{j=1}^3\big\|\partial_{x_1}(v_j)_{\neq}\partial_j\nabla v_3\big\|_{L^2}\\
&+\sum_{j=1}^3\big\|\nabla(v_j)_{\neq}\partial_j \partial_{x_1}v_3\big\|_{L^2}+\sum_{j=1}^3\big\|(v_j)_{\neq}\partial_j \nabla\partial_{x_1}v_3\big\|_{L^2}.
\end{aligned}
$$
For $j=1,2$, we obtain by Lemma \ref{lem:2.5} and \ref{lem:4.3} that
$$
\begin{aligned}
\big\|\nabla\partial_{x_1}(v_j)_{\neq}\partial_j v_3\big\|_{L^2}
&\lesssim\big(\big\|\partial_j\nabla \partial_{x_1}(v_j)_{\neq}\big\|_{L^2}+\big\|\nabla \partial_{x_1}(v_j)_{\neq}\big\|_{L^2}\big)\big\|\Delta v_3\big\|_{L^2}\\
&\lesssim\big(\big\|\partial_{x_1}(\Delta v_3)_{\neq}\big\|_{L^2}+\big\|\partial_{x_2}(\Delta v_3)_{\neq}\big\|_{L^2}+\big\|\nabla\partial_{x_1}\omega_3\big\|_{L^2}+\big\|\nabla\partial_{x_2}\omega_3\big\|_{L^2}\big)
\big\|\Delta v_3\big\|_{L^2},
\end{aligned}
$$

\vskip .05in

$$
\begin{aligned}
\big\|\partial_{x_1}(v_j)_{\neq}\partial_j\nabla v_3\big\|_{L^2}
&\lesssim\big(\big\|\partial_j \partial_{x_1}(v_j)_{\neq}\big\|_{L^2}+\big\|\partial_{x_1}(v_j)_{\neq}\big\|_{L^2}\big)
\big\|\nabla \Delta v_3\big\|_{L^2}\\
&\lesssim\big(\big\|\partial_{x_1}\omega_3\big\|_{L^2}+\big\|\partial_{x_2}\omega_3\big\|_{L^2}+\big\|(\Delta v_3)_{\neq}\big\|_{L^2}\big)\big\|\nabla \Delta v_3\big\|_{L^2},
\end{aligned}
$$

\vskip .05in

$$
\begin{aligned}
\big\|\nabla(v_j)_{\neq}\partial_j \partial_{x_1}v_3\big\|_{L^2}
&\lesssim\big(\big\|\partial_j\nabla (v_j)_{\neq}\big\|_{L^2}+\big\|\nabla (v_j)_{\neq}\big\|_{L^2}\big)\big\|\partial_{x_1}\Delta v_3\big\|_{L^2}\\
&\lesssim\big(\big\|(\Delta v_3)_{\neq}\big\|_{L^2}+\big\|\nabla\partial_{x_1}\omega_3\big\|_{L^2}+\big\|\nabla\partial_{x_2}\omega_3\big\|_{L^2}\big)
\big\|\partial_{x_1}\Delta v_3\big\|_{L^2},
\end{aligned}
$$

\vskip .05in

$$
\begin{aligned}
\big\|(v_j)_{\neq}\partial_j \nabla\partial_{x_1}v_3\big\|_{L^2}
&\lesssim\big(\big\|\partial_j(v_j)_{\neq}\big\|_{L^2}+\big\|(v_j)_{\neq}\big\|_{L^2}\big)
\big\| \partial_{x_1}\nabla\Delta v_3\big\|_{L^2}\\
&\lesssim\big(\big\|\partial_{x_1}\omega_3\big\|_{L^2}+\big\|\partial_{x_2}\omega_3\big\|_{L^2}
+\big\|(\Delta v_3)_{\neq}\big\|_{L^2}\big)\big\| \partial_{x_1}\nabla\Delta v_3\big\|_{L^2}.
\end{aligned}
$$
For $j=3$, one gets
$$
\big\|\nabla\partial_{x_1}(v_j)_{\neq}\partial_j v_3\big\|_{L^2}\lesssim \big\|\partial_{x_1}(v_j)_{\neq}\big\|_{H^2}\big\|\partial_j v_3\big\|_{H^1}\lesssim \big\|\partial_{x_1}(\Delta v_3)_{\neq}\big\|_{L^2}\big\|\Delta v_3\big\|_{L^2},
$$

$$
\big\|\partial_{x_1}(v_j)_{\neq}\partial_j\nabla v_3\big\|_{L^2}
\lesssim\big\|\partial_{x_1}(v_j)_{\neq}\big\|_{L^\infty}\big\|\partial_j\nabla v_3\big\|_{L^2}
\lesssim \big\|\partial_{x_1}(\Delta v_3)_{\neq}\big\|_{L^2}\big\|\Delta v_3\big\|_{L^2},
$$

$$
\big\|\nabla(v_j)_{\neq}\partial_j \partial_{x_1}v_3\big\|_{L^2}
\lesssim\big\|(v_j)_{\neq}\big\|_{H^2}\big\|\partial_j \partial_{x_1}v_3\big\|_{H^1}
\lesssim\big\|(\Delta v_3)_{\neq}\big\|_{L^2}\big\|\partial_{x_1}\Delta v_3\big\|_{L^2},
$$

$$
\big\|(v_j)_{\neq}\partial_j \nabla\partial_{x_1}v_3\big\|_{L^2}
\lesssim\big\|(v_j)_{\neq}\big\|_{L^\infty}\big\|\partial_j \nabla\partial_{x_1}v_3\big\|_{L^2}
\lesssim\big\|(\Delta v_3)_{\neq}\big\|_{L^2}\big\|\partial_{x_1}\Delta v_3\big\|_{L^2}.
$$
Then we have
\begin{equation}\label{eq:4.35}
\begin{aligned}
J_1\lesssim&\big(\big\|\partial_{x_1}(\Delta v_3)_{\neq}\big\|_{L^2}+\big\|\partial_{x_2}(\Delta v_3)_{\neq}\big\|_{L^2}+\big\|\nabla\partial_{x_1}\omega_3\big\|_{L^2}+\big\|\nabla\partial_{x_2}\omega_3\big\|_{L^2}\big)
\big\|\Delta v_3\big\|_{L^2}\\
&+\big(\big\|\partial_{x_1}\omega_3\big\|_{L^2}+\big\|\partial_{x_2}\omega_3\big\|_{L^2}
+\big\|(\Delta v_3)_{\neq}\big\|_{L^2}\big)
\big\|\nabla \Delta v_3\big\|_{L^2}\\
&+\big(\big\|(\Delta v_3)_{\neq}\big\|_{L^2}+\big\|\nabla\partial_{x_1}\omega_3\big\|_{L^2}+\big\|\nabla\partial_{x_2}\omega_3\big\|_{L^2}\big)
\big\|\partial_{x_1}\Delta v_3\big\|_{L^2}\\
&+\big(\big\|\partial_{x_1}\omega_3\big\|_{L^2}+\big\|\partial_{x_2}\omega_3\big\|_{L^2}+\big\|(\Delta v_3)_{\neq}\big\|_{L^2}\big)\big\|\partial_{x_1}\nabla\Delta v_3\big\|_{L^2}\\
&+\big\|\partial_{x_1}(\Delta v_3)_{\neq}\big\|_{L^2}\big\|\Delta v_3\big\|_{L^2}+\big\|(\Delta v_3)_{\neq}\big\|_{L^2}\big\|\partial_{x_1}\Delta v_3\big\|_{L^2}.
\end{aligned}
\end{equation}
Similarly, one has
\begin{equation}\label{eq:4.36}
\begin{aligned}
J_2&=\sum_{j=1}^3\big\|\nabla\left(P_0v_j\partial_j\partial_{x_1} (v_3)_{\neq}\right)\big\|_{L^2}\\
&\lesssim\sum_{j=1}^3\big\|\nabla P_0v_j\partial_j \partial_{x_1}(v_3)_{\neq} \big\|_{L^2}+\sum_{j=1}^3\big\|P_0v_j\partial_j \nabla\partial_{x_1}(v_3)_{\neq} \big\|_{L^2}\\
&\lesssim \big\|P_0v_j\|_{H^2}\|\partial_j \partial_{x_1}(v_3)_{\neq}\big\|_{H^1}\lesssim \big\|V\big\|_{H^2}\big\|\partial_{x_1}(\Delta v_3)_{\neq}\big\|_{L^2}.
\end{aligned}
\end{equation}
Combining \eqref{eq:4.34}-\eqref{eq:4.36}, we have
\begin{equation}\label{eq:4.37}
\begin{aligned}
\big\|\partial_{x_1}\nabla(V\cdot\nabla v_3)\big\|_{L^2}\lesssim&\big(\big\|\partial_{x_1}(\Delta v_3)_{\neq}\big\|_{L^2}+\big\|\partial_{x_2}(\Delta v_3)_{\neq}\big\|_{L^2}+\big\|\nabla\partial_{x_1}\omega_3\big\|_{L^2}\\
&+\big\|\nabla\partial_{x_2}\omega_3\big\|_{L^2}\big)\big\|\Delta v_3\big\|_{L^2}+\big\|V\big\|_{H^2}\big\|\partial_{x_1}(\Delta v_3)_{\neq}\big\|_{L^2}\\
&+\big(\big\|\partial_{x_1}\omega_3\big\|_{L^2}+\big\|\partial_{x_2}\omega_3\big\|_{L^2}+\big\|(\Delta v_3)_{\neq}\big\|_{L^2}\big)\big\|\nabla \Delta v_3\big\|_{L^2}\\
&+\big(\big\|(\Delta v_3)_{\neq}\big\|_{L^2}+\big\|\nabla\partial_{x_1}\omega_3\big\|_{L^2}+\big\|\nabla\partial_{x_2}\omega_3\big\|_{L^2}\big)
\big\|\partial_{x_1}\Delta v_3\big\|_{L^2}\\
&+\big(\big\|\partial_{x_1}\omega_3\big\|_{L^2}+\big\|\partial_{x_2}\omega_3\big\|_{L^2}+\big\|(\Delta v_3)_{\neq}\big\|_{L^2}\big)\big\|\partial_{x_1}\nabla\Delta v_3\big\|_{L^2}\\
&+\big\|\partial_{x_1}(\Delta v_3)_{\neq}\big\|_{L^2}\big\|\Delta v_3\big\|_{L^2}+\big\|(\Delta v_3)_{\neq}\big\|_{L^2}\big\|\partial_{x_1}\Delta v_3\big\|_{L^2}\\
\lesssim&\big(\big\|\partial_{x_1}(\Delta v_3)_{\neq}\big\|_{L^2}+\big\|\partial_{x_2}(\Delta v_3)_{\neq}\big\|_{L^2}\big)\big(\big\|V\big\|_{H^2}+\big\|\partial_{x_1}V\big\|_{H^2}\big)\\
&+\big(\big\|\nabla\partial_{x_1}\omega_3\big\|_{L^2}+\big\|\nabla\partial_{x_2}\omega_3\big\|_{L^2}\big)
\big(\big\|v_3\big\|_{H^2}+\big\|\partial_{x_1}v_3\big\|_{H^2}\big)\\
&+\big(\big\|\partial_{x_1}\omega_3\big\|_{L^2}+\big\|\partial_{x_2}\omega_3\big\|_{L^2}+\big\|(\Delta v_3)_{\neq}\big\|_{L^2}\big)\\
&\cdot\big(\big\|\nabla\Delta v_3\big\|_{L^2}+\big\|\partial_{x_1}\nabla\Delta v_3\big\|_{L^2}\big).
\end{aligned}
\end{equation}
Similarly, we also have
\begin{equation}\label{eq:4.38}
\begin{aligned}
\big\|\partial_{x_2}\nabla(V\cdot\nabla v_3)\big\|_{L^2}
\lesssim&\big(\big\|\partial_{x_1}(\Delta v_3)_{\neq}\big\|_{L^2}+\big\|\partial_{x_2}(\Delta v_3)_{\neq}\big\|_{L^2}\big)\big(\big\|V\big\|_{H^2}+\big\|\partial_{x_2}V\big\|_{H^2}\big)\\
&+\big(\big\|\nabla\partial_{x_1}\omega_3\big\|_{L^2}+\big\|\nabla\partial_{x_2}\omega_3\big\|_{L^2}\big)
\big(\big\|v_3\big\|_{H^2}
+\big\|\partial_{x_2}v_3\big\|_{H^2}\big)\\
&+\big(\big\|\partial_{x_1}\omega_3\big\|_{L^2}+\big\|\partial_{x_2}\omega_3\big\|_{L^2}+\big\|(\Delta v_3)_{\neq}\big\|_{L^2}\big)\\
&\cdot\big(\big\|\nabla\Delta v_3\big\|_{L^2}+\big\|\partial_{x_2}\nabla\Delta v_3\big\|_{L^2}\big).
\end{aligned}
\end{equation}
\vskip .05in

\noindent {\bf Step 3. The estimate of $\partial_{x_1}(\omega_3V)$ and $\partial_{x_2}(\omega_3V)$.} By the definition of $\omega_3$, one has
$$
\partial_{x_1}(V\omega_3)=\partial_{x_1}V(\partial_{x_1}v_2-\partial_{x_2}v_1)+V\partial_{x_1}\omega_3,
$$
and by Lemma \ref{lem:2.5} and \ref{lem:4.3}, we obtain
\begin{equation}\label{eq:4.39}
\begin{aligned}
\big\|\partial_{x_1}(V\omega_3)\big\|_{L^2}&\lesssim\big\|\partial_{x_1}V\partial_{x_1}v_2\big\|_{L^2}
+\big\|\partial_{x_1}V\partial_{x_2}v_1\big\|_{L^2}+\big\|V\partial_{x_1}\omega_3\big\|_{L^2}\\
&\lesssim\big(\big\|\partial_{x_1}V\big\|_{L^2}+\big\|\partial^2_{x_1}V\big\|_{L^2}\big)\big\|\Delta v_2\big\|_{L^2}\\
&\ \ \ \ +\big(\big\|\partial_{x_1}V\big\|_{L^2}+\big\|\partial_{x_1}\partial_{x_2}V\big\|_{L^2}\big)\big\|\Delta v_1\big\|_{L^2}
+\big\|V\big\|_{L^\infty}\big\|\partial_{x_1}\omega_3\big\|_{L^2}\\
&\lesssim\big(\big\|\partial_{x_1}V\big\|_{L^2}+\big\|\partial^2_{x_1}V\big\|_{L^2}
+\big\|\partial_{x_1}\partial_{x_2}V\big\|_{L^2}\big)\\
&\ \ \ \cdot\big\|\Delta V\big\|_{L^2}
+\big\|\Delta V\big\|_{L^2}\big\|\partial_{x_1}\omega_3\big\|_{L^2}\!\!\!\!\!\\
&\lesssim \big(\big\|\partial^2_{x_1}V\big\|_{L^2}+\big\|\partial_{x_1}\partial_{x_2}V\big\|_{L^2}\big)
\big\|\Delta V\big\|_{L^2}+\big\|\Delta V\big\|_{L^2}\big\|\partial_{x_1}\omega_3\big\|_{L^2}\!\!\!\!\!\\
&\lesssim \big(\big\|\partial^2_{x_1}V\big\|_{L^2}+\big\|\partial_{x_1}\partial_{x_2}V\big\|_{L^2}
+\big\|\partial_{x_1}\omega_3\big\|_{L^2}\big)
\big\|\Delta V\big\|_{L^2}\\
&\lesssim\big(\big\|\partial_{x_1}\omega_3\big\|_{L^2}+\big\|\partial_{x_2}\omega_3\big\|_{L^2}+\big\|(\Delta v_3)_{\neq}\big\|_{L^2}\big)\big\|\Delta V\big\|_{L^2}\\
&\lesssim\big(\big\|\partial_{x_1}(\Delta v_3)_{\neq}\big\|_{L^2}+\big\|\partial_{x_2}(\Delta v_3)_{\neq}\big\|_{L^2}+\big\|\partial_{x_1}\omega_3\big\|_{L^2}
+\big\|\partial_{x_2}\omega_3\big\|_{L^2}\big)\big\|V\big\|_{H^2}.\!\!\!\!\!\!\!
\end{aligned}
\end{equation}
Similarly, one has
\begin{equation}\label{eq:4.40}
\begin{aligned}
\big\|\partial_{x_2}(V\omega_3)\big\|_{L^2}
\lesssim&\big(\big\|\partial_{x_1}(\Delta v_3)_{\neq}\big\|_{L^2}+\big\|\partial_{x_2}(\Delta v_3)_{\neq}\big\|_{L^2}\\
&+\big\|\partial_{x_1}\omega_3\big\|_{L^2}+\big\|\partial_{x_2}\omega_3\big\|_{L^2}\big)
\big\|V\big\|_{H^2}.\!\!\!\!
\end{aligned}
\end{equation}

\noindent {\bf Step 4. The estimate of $\partial_{x_1}(\omega v_3)$ and $\partial_{x_2}(\omega v_3)$.}  Since
$$
\partial_{x_1}(\omega v_3)=\partial_{x_1}\omega v_3+\omega \partial_{x_1}v_3,\ \ \ \partial_{x_1}v_3=\partial_{x_1}(v_3)_{\neq},
$$
then we deduce by Lemma \ref{lem:4.3} that
\begin{equation}\label{eq:4.41}
\begin{aligned}
\big\|\partial_{x_1}(\omega v_3)\big\|_{L^2}&\lesssim\big\|\partial_{x_1}\omega v_3\big\|_{L^2}+\big\|\omega \partial_{x_1}(v_3)_{\neq}\big\|_{L^2}\\
&\lesssim\big\|\partial_{x_1}\omega\big\|_{L^2}
\big\| v_3\big\|_{L^\infty}+\big\|\omega\big\|_{H^1}\big\|\partial_{x_1}(v_3)_{\neq}\big\|_{H^1}\\
&\lesssim \big(\big\|(\Delta v_3)_{\neq}\big\|_{L^2}+\big\|\partial_{x_1}\nabla\omega_3\big\|_{L^2}+\big\|\partial_{x_2}\nabla\omega_3\big\|_{L^2}\big)\\
&\ \ \cdot \big\|\Delta v_3\big\|_{L^2}
+\big\|V\big\|_{H^2}\big\|(v_3)_{\neq}\big\|_{H^2}\\
& \lesssim \big\|(\Delta v_3)_{\neq}\big\|_{L^2}\big\|V\big\|_{H^2}+\big(\big\|\partial_{x_1}\nabla\omega_3\big\|_{L^2}
+\big\|\partial_{x_2}\nabla\omega_3\big\|_{L^2}\big)\big\|v_3\big\|_{H^2}.
\end{aligned}
\end{equation}
Similarly, one has
\begin{equation}\label{eq:4.42}
\big\|\partial_{x_2}(\omega v_3)\big\|_{L^2}
 \lesssim \big\|(\Delta v_3)_{\neq}\big\|_{L^2}\big\|V\big\|_{H^2}+\big(\big\|\partial_{x_1}\nabla\omega_3\big\|_{L^2}
+\big\|\partial_{x_2}\nabla\omega_3\big\|_{L^2}\big)\big\|v_3\big\|_{H^2}.
\end{equation}

\vskip .05in

\noindent {\bf Step 5. The estimate of $\nabla(V\cdot\nabla V)$.} Since
$$
(V\cdot\nabla V)_{\neq}=(V\cdot\nabla V_{\neq})_{\neq}+V_{\neq}\cdot\nabla P_0V,
$$
then we get
\begin{equation}\label{eq:4.43}
\begin{aligned}
\big\|\nabla(V\cdot\nabla V)_{\neq}\big\|_{L^2}
&\lesssim\big\|\nabla(V\cdot\nabla V_{\neq})_{\neq}\big\|_{L^2}+\big\|\nabla(V_{\neq}\cdot\nabla P_0V)\big\|_{L^2}\\
&\lesssim \big\|V\big\|_{H^2}\big\|\nabla V_{\neq}\big\|_{H^1}+\big\|V_{\neq}\big\|_{H^2}\big\|\nabla P_0V\big\|_{H^1}\\
&\lesssim \big\|V\big\|_{H^2}\big\|\Delta V_{\neq}\big\|_{L^2}.
\end{aligned}
\end{equation}
Similarly, one has
\begin{equation}\label{eq:4.44}
\big\|V_{\neq}\cdot\nabla V_{\neq}\big\|_{H^1}\lesssim
\big\|V_{\neq}\big\|_{H^2}\big\|\Delta V_{\neq}\big\|_{L^2}\lesssim\big\|V\big\|_{H^2}\big\|\Delta V_{\neq}\big\|_{L^2}.
\end{equation}
By the similar argument with \eqref{eq:4.43} and \eqref{eq:4.44}, we obtain
\begin{equation}\label{eq:4.45}
\begin{aligned}
\big\|\partial_{x_1}\nabla(V\cdot\nabla V)\big\|_{L^2}
\lesssim&\big\|\nabla(\partial_{x_1}V\cdot\nabla V_{\neq})_{\neq}\big\|_{L^2}\\
&+\big\|\nabla(\partial_{x_1}V\cdot\nabla P_0V)\big\|_{L^2}
+\big\|\nabla(V\cdot\nabla \partial_{x_1}V)_{\neq}\big\|_{L^2}\\
\lesssim &\big\|\partial_{x_1}V\|_{H^2}\|\Delta V_{\neq}\big\|_{L^2}+\big\|V\|_{H^2}\|\partial_{x_1}\Delta V_{\neq}\big\|_{L^2},
\end{aligned}
\end{equation}
and
\begin{equation}\label{eq:4.46}
\begin{aligned}
\big\|\partial_{x_1}\nabla(V_{\neq}\cdot\nabla V_{\neq})\big\|_{L^2}
\lesssim \big\|\partial_{x_1}V\big\|_{H^2}\big\|\Delta V_{\neq}\big\|_{L^2}.
\end{aligned}
\end{equation}
Similarly, one also gets
\begin{equation}\label{eq:4.47}
\big\|\partial_{x_2}\nabla(V\cdot\nabla V)\big\|_{L^2}
\lesssim
\big\|\partial_{x_2}V\big\|_{H^2}\big\|\Delta V_{\neq}\big\|_{L^2}+\big\|V\big\|_{H^2}\big\|\partial_{x_2}\Delta V_{\neq}\big\|_{L^2},
\end{equation}
and
\begin{equation}\label{eq:4.48}
\big\|\partial_{x_2}\nabla(V_{\neq}\cdot\nabla V_{\neq})\big\|_{L^2}
\lesssim \big\|\partial_{x_2}V\big\|_{H^2}\big\|\Delta V_{\neq}\big\|_{L^2}.
\end{equation}

\vskip .05in

Now, we use the above estimates to complete the proof of lemma. Combining \eqref{eq:4.28}, \eqref{eq:4.29} and \eqref{eq:4.37}-\eqref{eq:4.42}, one has
$$
\begin{aligned}
&\sum_{i=1}^2\big(\big\|\partial_{x_i}(\Delta p)\big\|_{L^2}+\big\|\partial_{x_i}\nabla(V\cdot\nabla v_3)\big\|_{L^2}
+\big\|\partial_{x_i}(\omega_3V)\big\|_{L^2}+\big\|\partial_{x_i}(\omega v_3)\big\|_{L^2}\big)\\
\lesssim&\big(\big\|\partial_{x_1}(\Delta v_3)_{\neq}\big\|_{L^2}+\big\|\partial_{x_2}(\Delta v_3)_{\neq}\big\|_{L^2}+\big\|\partial_{x_1}\omega_3\big\|_{L^2}+\big\|\partial_{x_2}\omega_3\big\|_{L^2}\big)\\
&\cdot\big(\big\|V\big\|_{H^2}+\big\|\partial_{x_1}V\big\|_{H^2}+\big\|\partial_{x_2}V\big\|_{H^2}\big)\\
&+\big(\big\|\nabla\partial_{x_1}\omega_3\big\|_{L^2}+\big\|\nabla\partial_{x_2}\omega_3\big\|_{L^2}\big)
\big(\big\|v_3\big\|_{H^2}+\big\|\partial_{x_1}v_3\big\|_{H^2}+\big\|\partial_{x_2}v_3\big\|_{H^2}\big)\\
&+\big(\big\|\partial_{x_1}\omega_3\big\|_{L^2}+\big\|\partial_{x_2}\omega_3\big\|_{L^2}+\big\|(\Delta v_3)_{\neq}\big\|_{L^2})\\
&\cdot\big(\big\|\nabla\Delta v_3\big\|_{L^2}+\big\|\partial_{x_1}\nabla\Delta v_3\big\|_{L^2}+\big\|\partial_{x_2}\nabla\Delta v_3\big\|_{L^2}\big).
\end{aligned}
$$
Combining \eqref{eq:4.26} and \eqref{eq:4.33}, to obtain
$$
\begin{aligned}
\big\|\Delta p\big\|_{L^2}+\big\|\nabla(V\cdot\nabla v_3)\big\|_{L^2}\lesssim
&\big(\big\|\nabla\partial_{x_1}\omega_3\big\|_{L^2}+\big\|\nabla\partial_{x_2}\omega_3\big\|_{L^2}
+\big\|\nabla\Delta v_3\big\|_{L^2}\big)\big\|\Delta v_3\big\|_{L^2}\\
&+\big(\big\|\partial_{x_1}\omega_3\big\|_{L^2}+\big\|\partial_{x_2}\omega_3\big\|_{L^2}
+\big\|(\Delta v_3)_{\neq}\big\|_{L^2}\big)\\
&\cdot\big(\big\|V\big\|_{H^2}+\big\|\nabla\Delta v_3\big\|_{L^2}\big).
\end{aligned}
$$
Similarly, combining \eqref{eq:4.43}-\eqref{eq:4.48}, we have
$$
\big\|\nabla(V\cdot\nabla V)_{\neq}\big\|_{L^2}+\big\|V_{\neq}\cdot\nabla V_{\neq}\big\|_{H^1}\lesssim\big\|V\big\|_{H^2}\big\|\Delta V_{\neq}\big\|_{L^2},
$$

$$
\begin{aligned}
&\big\|\partial_{x_1}\nabla(V\cdot\nabla V)\big\|_{L^2}+\big\|\partial_{x_1}\nabla(V_{\neq}\cdot\nabla V_{\neq})\big\|_{L^2}\\
\lesssim &\big\|\partial_{x_1}V\big\|_{H^2}\big\|\Delta V_{\neq}\big\|_{L^2}+\big\|V\big\|_{H^2}\big\|\partial_{x_1}\Delta V_{\neq}\big\|_{L^2},
\end{aligned}
$$
and
$$
\begin{aligned}
&\big\|\partial_{x_2}\nabla(V\cdot\nabla V)\big\|_{L^2}+\big\|\partial_{x_2}\nabla(V_{\neq}\cdot\nabla V_{\neq})\big\|_{L^2}\\
\lesssim &\big\|\partial_{x_2}V\big\|_{H^2}\big\|\Delta V_{\neq}\big\|_{L^2}+\big\|V\big\|_{H^2}\big\|\partial_{x_2}\Delta V_{\neq}\big\|_{L^2}.
\end{aligned}
$$
This completes the proof of Lemma \ref{lem:4.4}.
\end{proof}

\subsection{Global nonlinear stability}
Here we establish the nonlinear stability and give the threshold for \eqref{eq:1.1} around planar helical flow. In fact, we consider the perturbation equation \eqref{eq:1.3}, see also \eqref{eq:1.5} or \eqref{eq:4.1}. We denote
$$
\begin{aligned}
\mathcal{M}_1=&\big\|P_0\Delta v_3\big\|^2_{X_1}+\big\|\partial_{x_1}\Delta v_3\big\|^2_{X_1}+\big\|\partial_{x_2}\Delta v_3\big\|^2_{X_1}
+\nu^{-1}\left\|e^{\epsilon\nu^{1/2}t}\partial_{x_1}\Delta p\right\|^2_{L^2 L^2}\\
&+\nu^{-1}\left\|e^{\epsilon\nu^{1/2}t}\partial_{x_2}\Delta p\right\|^2_{L^2 L^2}
+\big\|\partial_{x_1}(\Delta v_3)\big\|^2_{X_{ed}}\\
&+\big\|\partial_{x_2}(\Delta v_3)\big\|^2_{X_{ed}}+\big\|\partial_{x_1}\omega_3\big\|^2_{X_{ed}}
+\big\|\partial_{x_2}\omega_3\big\|^2_{X_{ed}},\\[2MM]
\mathcal{M}_2=&\big\|\partial_{x_1}\Delta V\big\|^2_{X_{ed}}+\big\|\partial_{x_2}\Delta V\big\|^2_{X_{ed}},
\end{aligned}
$$
and let us recall the following notations
$$
\begin{aligned}
\mathcal{E}_1(T)=&\sup_{0\leq t\leq T}\left[\big\|v_3\big\|_{X_0}+e^{\epsilon\nu^{1/2}t}\left(\big\|\partial_{x_1}(\Delta v_3)\big\|_{L^2}+\big\|\partial_{x_2}(\Delta v_3)\big\|_{L^2}\right)\right]\\
&+\sup_{0\leq t\leq T}\left[e^{\epsilon\nu^{1/2}t}\left(\big\|\partial_{x_1}\omega_3\big\|_{L^2}
+\big\|\partial_{x_2}\omega_3\big\|_{L^2}\right)\right],\\[2MM]
\mathcal{E}_2(T)=&\sup_{0\leq t\leq T}\big\|V(t)\big\|_{X_0}.
\end{aligned}
$$
Here the norm  $\|\cdot\|_{X_0}, \|\cdot\|_{X_{ed}}$ and $\|\cdot\|_{X_1}$ are defined in \eqref{eq:1.8}, \eqref{eq:1.15} and \eqref{eq:2.1}.

\vskip .05in

Next, we establish the following estimates based on the smallness assumption of $\mathcal{E}_1$ and $\mathcal{E}_2$,
which are very important to obtain global stability by continuity method.
\begin{proposition}\label{prop:4.5}
Let $\delta>1$, there exist positive constant $\varepsilon_1\in (0,1)$ and $C_0$, such that for any $0<\nu\ll1, T>0$, if the solution $V\in C([0,T],X_0(\mathbb{T}^3))$ and $\nabla V\in L^2([0,T],X_0(\mathbb{T}^3))$ of \eqref{eq:1.3} satisfies
$$
\mathcal{E}_1(T)\leq\varepsilon_1\nu,\ \ \ \ \mathcal{E}_2(T)\leq\varepsilon_1\nu^{3/4}.
$$
Then we have

$$
\mathcal{M}_1\lesssim \big\|V_0\big\|^2_{X_0},\ \ \ \mathcal{M}_2\lesssim\nu^{-3/2}\big\|V_0\big\|^2_{X_0},
$$
and
$$
\mathcal{E}_1(T)\leq C_0\big\|V_0\big\|_{X_0},\ \ \ \ \mathcal{E}_2(T)\leq C_0\nu^{-1}\big\|V_0\big\|_{X_0}.
$$
\end{proposition}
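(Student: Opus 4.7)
The plan is to combine the enhanced dissipation estimates of Propositions \ref{prop:4.1} and \ref{prop:4.2} (which handle the non-zero modes of $\Delta v_3$ and $\omega_3$) with the nonlinear product estimates of Lemma \ref{lem:4.4}, supplemented by a pure energy estimate for the zero modes $P_0 v_1, P_0 v_2$, and then close the estimates via a continuity/bootstrap argument exploiting the smallness of $\mathcal{E}_1, \mathcal{E}_2$.

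\textbf{Step 1. Estimate of $\mathcal{M}_1$.} I would insert the expressions $f = -\nabla(V\cdot\nabla v_3) - (0,0,\Delta p)$ and $g = -V\omega_3 + \omega v_3$ into Propositions \ref{prop:4.1} and \ref{prop:4.2}. Applying Lemma \ref{lem:4.4} termwise, each factor $\|\partial_{x_i} f\|_{L^2}$ and $\|\partial_{x_i} g\|_{L^2}$ splits as a product of one factor carrying the enhanced-dissipation weight $e^{\epsilon\nu^{1/2}t}$ (absorbed into $\mathcal{E}_1$) and one factor without the weight (absorbed into $\mathcal{E}_2$ or $\|v_3\|_{X_0}$). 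Schematically,
\begin{equation*}
\nu^{-1}\|e^{\epsilon\nu^{1/2}t}\partial_{x_i} f\|_{L^2L^2}^2 + \nu^{-1}\|e^{\epsilon\nu^{1/2}t}\partial_{x_i} g\|_{L^2L^2}^2 \lesssim \nu^{-1}\mathcal{E}_1(T)^2 \bigl(\mathcal{E}_2(T)^2 + \mathcal{E}_1(T)^2\bigr),
\end{equation*}
so the $X_{ed}$ parts of $\mathcal{M}_1$ are controlled by $\|V_0\|_{X_0}^2 + \nu^{-1}\mathcal{E}_1(T)^2(\mathcal{E}_1(T)^2 + \mathcal{E}_2(T)^2)$. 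The $X_1$ components of $\mathcal{M}_1$ (i.e.\ for $P_0\Delta v_3$ and $\partial_{x_i}\Delta v_3$) follow from a standard parabolic energy estimate on the $v_3$ equation combined with Lemma \ref{lem:4.4}.

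\textbf{Step 2. Estimate of $\mathcal{M}_2$ and the zero modes $P_0 v_1, P_0 v_2$.} This is the crucial step that dictates the $7/4$ threshold, since $P_0 v_1, P_0 v_2$ carry no enhanced dissipation and the lift-up terms $-\nabla(\delta\cos(m_0 y)P_0 v_3)$, $-\nabla(\delta\sin(m_0 y)P_0 v_3)$ act as slowly decaying sources. Because these equations are one-dimensional in $y$, I would perform a direct $H^2$ energy estimate on $P_0 v_1$ and $P_0 v_2$: the lift-up source is paired with $P_0 v_3$, which by Step 1 and the $X_{ed}$ bound already decays like $e^{-\epsilon\nu^{1/2}t}$, while the self-interaction $P_0(V\cdot\nabla v_i)$ is at most quadratic in $\mathcal{E}_2$. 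Integrating in time yields $\mathcal{E}_2(T) \lesssim \nu^{-1}\|V_0\|_{X_0} + \nu^{-1}\mathcal{E}_1(T)\mathcal{E}_2(T)$. For $\partial_{x_i}V$ I would apply Lemma \ref{lem:2.4} to the equations $(\partial_t + \mathcal{H})\partial_{x_i}\Delta V = \div(\text{nonlinearity})$, using the last two inequalities of Lemma \ref{lem:4.4} to get $\mathcal{M}_2 \lesssim \|V_0\|_{X_0}^2 + \nu^{-1}\mathcal{E}_2(T)^2 \mathcal{E}_1(T)^2$; the claimed $\nu^{-3/2}$ appears after inserting the worst-case $\mathcal{E}_2 \lesssim \nu^{3/4}$ and $\mathcal{E}_1 \lesssim \nu$.

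\textbf{Step 3. Closing the bootstrap.} Combining Steps 1 and 2 with the smallness hypotheses $\mathcal{E}_1(T) \leq \varepsilon_1\nu$ and $\mathcal{E}_2(T) \leq \varepsilon_1\nu^{3/4}$, each quadratic nonlinear contribution picks up at least a factor $\varepsilon_1$, so that
\begin{equation*}
\mathcal{E}_1(T) \leq C\|V_0\|_{X_0} + C\varepsilon_1\mathcal{E}_1(T), \qquad \mathcal{E}_2(T) \leq C\nu^{-1}\|V_0\|_{X_0} + C\varepsilon_1\mathcal{E}_2(T).
\end{equation*}
Choosing $\varepsilon_1$ small enough to absorb the quadratic terms gives the claimed bounds with $C_1 = 2C$.

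The main obstacle is Step 2: $P_0 v_1, P_0 v_2$ do not enjoy any enhanced dissipation, and the lift-up source driven by $P_0 v_3$ must be integrated carefully to avoid losing the $\nu^{-1}$ scaling. It is precisely this $\nu^{-1}$ growth of $\mathcal{E}_2$ combined with the quadratic product $\nu^{-1}\mathcal{E}_1 \mathcal{E}_2$ that forces the threshold $\|V_0\|_{X_0} \lesssim \nu^{7/4}$: writing $\mathcal{E}_1 \lesssim \|V_0\|_{X_0}$ and $\mathcal{E}_2 \lesssim \nu^{-1}\|V_0\|_{X_0}$, the bootstrap assumption $\mathcal{E}_2 \leq \varepsilon_1\nu^{3/4}$ translates into $\|V_0\|_{X_0} \lesssim \nu^{7/4}$.
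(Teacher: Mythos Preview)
Your overall architecture---Propositions \ref{prop:4.1}/\ref{prop:4.2} plus Lemma \ref{lem:4.4} for $\mathcal{M}_1$, Lemma \ref{lem:2.4} for the nonzero modes of $\Delta V$, a heat-energy estimate for $P_0v_1,P_0v_2$, then absorption---matches the paper. But two of your mechanisms are misidentified, and they are precisely the ones that determine the exponents $\nu^{-3/2}$ and $\nu^{-1}$.

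First, the $\nu^{-3/2}$ in $\mathcal{M}_2$ does \emph{not} come from ``inserting the worst-case $\mathcal{E}_2\lesssim\nu^{3/4}$, $\mathcal{E}_1\lesssim\nu$'' into a nonlinear term. The equations $(\partial_t+\mathcal{H})\Delta v_i=\div F_i$ ($i=1,2$) contain, besides $\nabla(V\cdot\nabla v_i)$, \emph{linear} sources in $v_3$, $\partial_{x_j}v_i$, $\partial_{x_j}v_3$ and $\Delta p$ produced by commuting $\Delta$ with the background flow. After Lemma \ref{lem:2.4} these linear pieces contribute
\[
\nu^{-1}\big\|e^{\epsilon\nu^{1/2}t}\partial_{x_i}F^{\mathrm{lin}}\big\|_{L^2L^2}^2
\;\lesssim\;\nu^{-3/2}\mathcal{M}_1\;\lesssim\;\nu^{-3/2}\|V_0\|_{X_0}^2,
\]
the extra $\nu^{-1/2}$ arising from the $\nu^{1/2}\|e^{\epsilon\nu^{1/2}t}\cdot\|_{L^2L^2}^2$ piece of the $X_{ed}$ norm. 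Your purely nonlinear bound $\nu^{-1}\mathcal{E}_2^2\mathcal{E}_1^2\sim\varepsilon_1^4\nu^{5/2}$ is far smaller and cannot explain the $\nu^{-3/2}$. Relatedly, your Step~1 schematic should read $\mathcal{M}_1\le C\|V_0\|_{X_0}^2+C(\nu^{-2}\mathcal{E}_1^2+\nu^{-3/2}\mathcal{E}_2^2)\mathcal{M}_1$, not $\nu^{-1}\mathcal{E}_1^2(\mathcal{E}_1^2+\mathcal{E}_2^2)$; it is exactly this pair of coefficients that forces the bootstrap thresholds $\mathcal{E}_1\le\varepsilon_1\nu$, $\mathcal{E}_2\le\varepsilon_1\nu^{3/4}$.

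Second, your claim that ``$P_0v_3$\dots\ already decays like $e^{-\epsilon\nu^{1/2}t}$'' is false: $P_0v_3$ is the zero mode, satisfies only $(\partial_t-\nu\partial_y^2)P_0\Delta v_3=\div P_0f$, and carries no enhanced dissipation---only the $X_1$ bound $\|P_0\Delta v_3\|_{X_1}\lesssim\|V_0\|_{X_0}$ from $\mathcal{M}_1$. The lift-up source $\nabla(\delta\cos(m_0y)P_0v_3)$ in the $P_0v_1$ equation therefore contributes
\[
\nu^{-1}\|\nabla\Delta P_0v_3\|_{L^2L^2}^2\;\lesssim\;\nu^{-2}\|P_0\Delta v_3\|_{X_1}^2\;\lesssim\;\nu^{-2}\|V_0\|_{X_0}^2
\]
to $\|\Delta P_0v_1\|_{X_1}^2$, i.e.\ two heat-scale integrations in time, not one enhanced-dissipation integration. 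This $\nu^{-2}$ on the square is what yields $\mathcal{E}_2\le C_1\nu^{-1}\|V_0\|_{X_0}$.
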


\begin{proof}
We finish the proof by following four steps.

\vskip .05in

\noindent {\bf Step 1. The estimate of $\mathcal{M}_1$.} Considering the \eqref{eq:4.1}-\eqref{eq:4.3}, we can deduce by Proposition \ref{prop:4.1} and \ref{prop:4.2} that
\begin{equation}\label{eq:4.49}
\begin{aligned}
&\big\|\partial_{x_1}(\Delta v_3)\big\|^2_{X_{ed}}+\big\|\partial_{x_2}(\Delta v_3)\big\|^2_{X_{ed}}
+\big\|\partial_{x_1}\omega_3\big\|^2_{X_{ed}}+\big\|\partial_{x_2}\omega_3\big\|^2_{X_{ed}}\\
&+\nu^{-1}\left\|e^{\epsilon\nu^{1/2}t}\partial_{x_1}\Delta p\right\|^2_{L^2 L^2}+\nu^{-1}\left\|e^{\epsilon\nu^{1/2}t}\partial_{x_2}\Delta p\right\|^2_{L^2 L^2}\\
\lesssim&\big\|\partial_{x_1}(\Delta v_3)(0)\big\|^2_{L^2}+\big\|\partial_{x_2}(\Delta v_3)(0)\big\|^2_{L^2}
+\big\|\partial_{x_1}\omega_3(0)\big\|^2_{L^2}+\big\|\partial_{x_2}\omega_3(0)\big\|^2_{L^2}\\
&+\nu^{-1}\left(\left\|e^{\epsilon\nu^{1/2}t}\partial_{x_1}f\right\|^2_{L^2L^2}
+\left\|e^{\epsilon\nu^{1/2}t}\partial_{x_1}g\right\|^2_{L^2L^2}\right)
+\nu^{-1}\left\|e^{\epsilon\nu^{1/2}t}\partial_{x_1}\Delta p\right\|^2_{L^2 L^2}\\
&+\nu^{-1}\left(\left\|e^{\epsilon\nu^{1/2}t}\partial_{x_2}f\right\|^2_{L^2L^2}
+\left\|e^{\epsilon\nu^{1/2}t}\partial_{x_2}g\right\|^2_{L^2L^2}\right)
+\nu^{-1}\left\|e^{\epsilon\nu^{1/2}t}\partial_{x_2}\Delta p\right\|^2_{L^2 L^2}.
\end{aligned}
\end{equation}
Combining \eqref{eq:4.2}, \eqref{eq:4.3} and Lemma \ref{lem:4.4}, we obtain
\begin{equation}\label{eq:4.50}
\begin{aligned}
&\left\|e^{\epsilon\nu^{1/2}t}\partial_{x_1}f\right\|^2_{L^2L^2}
+\left\|e^{\epsilon\nu^{1/2}t}\partial_{x_1}g\right\|^2_{L^2L^2}
+\left\|e^{\epsilon\nu^{1/2}t}\partial_{x_2}f\right\|^2_{L^2L^2}
+\left\|e^{\epsilon\nu^{1/2}t}\partial_{x_2}g\right\|^2_{L^2L^2}\\
&+\left\|e^{\epsilon\nu^{1/2}t}\partial_{x_1}\Delta p\right\|^2_{L^2 L^2}
+\left\|e^{\epsilon\nu^{1/2}t}\partial_{x_2}\Delta p\right\|^2_{L^2 L^2}\\
\lesssim&\sum_{i=1}^2\big(\left\|e^{\epsilon\nu^{1/2}t}\partial_{x_i}(\Delta p)\right\|^2_{L^2L^2}+\left\|e^{\epsilon\nu^{1/2}t}\partial_{x_i}\nabla(V\cdot\nabla v_3)\right\|^2_{L^2L^2}\\
&+\left\|e^{\epsilon\nu^{1/2}t}\partial_{x_i}(\omega_3V)\right\|^2_{L^2L^2}
+\left\|e^{\epsilon\nu^{1/2}t}\partial_{x_i}(\omega v_3)\right\|^2_{L^2L^2}\big)\\
\lesssim&\int_0^T e^{2\epsilon\nu^{1/2}t}\left(\big\|\partial_{x_1}(\Delta v_3)_{\neq}\big\|_{L^2}+\big\|\partial_{x_2}(\Delta v_3)_{\neq}\big\|_{L^2}+\big\|\partial_{x_1}\omega_3\big\|_{L^2}+\big\|\partial_{x_2}\omega_3\big\|_{L^2}\right)^2\\
&\cdot\big(\big\|V\big\|_{H^2}+\big\|\partial_{x_1}V\big\|_{H^2}+\big\|\partial_{x_2}V\big\|_{H^2}\big)^2dt\\
&+\int_0^{T} e^{2\epsilon\nu^{1/2}t}\big(\big\|\nabla\partial_{x_1}\omega_3\big\|_{L^2}+\big\|\nabla\partial_{x_2}\omega_3\big\|_{L^2}\big)^2
\big(\big\|v_3\big\|_{H^2}
+\big\|\partial_{x_1}v_3\big\|_{H^2}+\big\|\partial_{x_2}v_3\big\|_{H^2}\big)^2dt\\
&+\int_0^T e^{2\epsilon\nu^{1/2}t}\big(\big\|\partial_{x_1}\omega_3\big\|_{L^2}+\big\|\partial_{x_2}\omega_3\big\|_{L^2}
+\big\|(\Delta v_3)_{\neq}\big\|_{L^2}\big)^2\\
&\cdot\big(\big\|\nabla\Delta v_3\big\|_{L^2}+\big\|\partial_{x_1}\nabla\Delta v_3\big\|_{L^2}+\big\|\partial_{x_2}\nabla\Delta v_3\big\|_{L^2}\big)^2dt.
\end{aligned}
\end{equation}
We write $\mathcal{E}_1=\mathcal{E}_1(T), \mathcal{E}_2=\mathcal{E}_2(T)$, and combining the definition $\|\cdot\|_{X_0}$, $\|\cdot\|_{X_{ed}}$, $\mathcal{E}_1$ and $\mathcal{E}_2$, one gets 
$$
\begin{aligned}
&\int_0^T e^{2\epsilon\nu^{1/2}t}\big(\big\|\partial_{x_1}(\Delta v_3)_{\neq}\big\|_{L^2}+\big\|\partial_{x_2}(\Delta v_3)_{\neq}\big\|_{L^2}+\big\|\partial_{x_1}\omega_3\big\|_{L^2}+\big\|\partial_{x_2}\omega_3\big\|_{L^2}\big)^2\\
&\cdot\big(\big\|V\big\|_{H^2}+\big\|\partial_{x_1}V\big\|_{H^2}+\big\|\partial_{x_2}V\big\|_{H^2}\big)^2dt\\
&+\int_0^T e^{2\epsilon\nu^{1/2}t}\big(\big\|\nabla\partial_{x_1}\omega_3\big\|_{L^2}+\big\|\nabla\partial_{x_2}\omega_3\big\|_{L^2}\big)^2
\big(\big\|v_3\big\|_{H^2}
+\big\|\partial_{x_1}v_3\big\|_{H^2}+\big\|\partial_{x_2}v_3\big\|_{H^2}\big)^2dt\\
 \lesssim &\left(\left\|e^{\epsilon\nu^{1/2}t}\partial_{x_1}(\Delta v_3)_{\neq}\right\|^2_{L^2L^2}+\left\|e^{\epsilon\nu^{1/2}t}\partial_{x_2}(\Delta v_3)_{\neq}\right\|^2_{L^2L^2}\right)\big\|V\big\|^2_{L^\infty X_0}\\
 &+\left(\left\|e^{\epsilon\nu^{1/2}t}\partial_{x_1}\omega_3\right\|^2_{L^2L^2}
 +\left\|e^{\epsilon\nu^{1/2}t}\partial_{x_2}\omega_3\right\|^2_{L^2L^2}\right)\big\|V\big\|^2_{L^\infty X_0}\\
 &+\left(\left\|e^{\epsilon\nu^{1/2}t}\nabla\partial_{x_1}\omega_3\right\|^2_{L^2L^2}
 +\left\|e^{\epsilon\nu^{1/2}t}\nabla\partial_{x_2}\omega_3\right\|^2_{L^2L^2}\right)\big\|v_3\big\|^2_{L^\infty X_0}\\
\lesssim &\nu^{-1/2}\big(\big\|\partial_{x_1}(\Delta v_3)_{\neq}\big\|^2_{X_{ed}}+\big\|\partial_{x_2}(\Delta v_3)_{\neq}\big\|^2_{X_{ed}}+\big\|\partial_{x_1}\omega_3\big\|^2_{X_{ed}}
+\big\|\partial_{x_2}\omega_3\big\|^2_{X_{ed}}\big)\mathcal{E}^2_2\\
&+\nu^{-1}\big(\big\|\partial_{x_1}\omega_3\big\|^2_{X_{ed}}
+\big\|\partial_{x_2}\omega_3\big\|^2_{X_{ed}}\big)\mathcal{E}^2_1,
\end{aligned}
$$
and
$$
\begin{aligned}
&\int_0^T e^{2\epsilon\nu^{1/2}t}\big(\big\|\partial_{x_1}\omega_3\big\|_{L^2}+\big\|\partial_{x_2}\omega_3\big\|_{L^2}
+\big\|(\Delta v_3)_{\neq}\big\|_{L^2}\big)^2\\
& \ \cdot\big(\big\|\nabla\Delta v_3\big\|_{L^2}+\big\|\partial_{x_1}\nabla\Delta v_3\big\|_{L^2}+\big\|\partial_{x_2}\nabla\Delta v_3\big\|_{L^2}\big)^2dt\\
\lesssim&\big(\left\|e^{\epsilon\nu^{1/2}t}\partial_{x_1}\omega_3\right\|^2_{L^\infty L^2}+\left\|e^{\epsilon\nu^{1/2}t}\partial_{x_2}\omega_3\right\|^2_{L^\infty L^2}\\
&+\left\|e^{\epsilon\nu^{1/2}t}\partial_{x_1}(\Delta v_3)_{\neq}\right\|^2_{L^\infty L^2}
+\left\|e^{\epsilon\nu^{1/2}t}\partial_{x_2}(\Delta v_3)_{\neq}\right\|^2_{L^\infty L^2}\big)\\
&\ \ \cdot\big(\big\|P_0\nabla\Delta v_3\big\|^2_{L^2L^2}+\big\|\partial_{x_1}\nabla\Delta v_3\big\|^2_{L^2L^2}+\big\|\partial_{x_2}\nabla\Delta v_3\big\|^2_{L^2L^2}\big)\\
\lesssim& \nu^{-1}\big(\big\|P_0\Delta v_3\big\|^2_{X_1}+\big\|\partial_{x_1}\Delta v_3\big\|^2_{X_1}+\big\|\partial_{x_2}\Delta v_3\big\|^2_{X_1}\big)\mathcal{E}^2_1.
\end{aligned}
$$
Obviously, we know that
$$
\big\|\partial_{x_1}(\Delta v_3)_{\neq}(0)\big\|^2_{L^2}+\big\|\partial_{x_2}(\Delta v_3)_{\neq}(0)\big\|^2_{L^2}
+\big\|\partial_{x_1}\omega_3(0)\big\|^2_{L^2}+\big\|\partial_{x_2}\omega_3(0)\big\|^2_{L^2}\lesssim \big\|V_0\big\|^2_{X_0}.
$$
Then combining \eqref{eq:4.49} and \eqref{eq:4.50}, we have
\begin{equation}\label{eq:4.51}
\begin{aligned}
&\big\|\partial_{x_1}(\Delta v_3)_{\neq}\big\|^2_{X_{ed}}+\big\|\partial_{x_2}(\Delta v_3)_{\neq}\big\|^2_{X_{ed}}
+\big\|\partial_{x_1}\omega_3\big\|^2_{X_{ed}}+\big\|\partial_{x_2}\omega_3\big\|^2_{X_{ed}}\\
&+\nu^{-1}\left\|e^{\epsilon\nu^{1/2}t}\partial_{x_1}\Delta p\right\|^2_{L^2 L^2}+\nu^{-1}\left\|e^{\epsilon\nu^{1/2}t}\partial_{x_2}\Delta p\right\|^2_{L^2 L^2}\\
\leq&C\big\|V_0\big\|^2_{X_0}+C\nu^{-2}\big(\big\|P_0\Delta v_3\big\|^2_{X_1}+\big\|\partial_{x_1}\Delta v_3\big\|^2_{X_1}+\big\|\partial_{x_2}\Delta v_3\big\|^2_{X_1}\big)\mathcal{E}^2_1\\
&+C\nu^{-3/2}\big(\big\|\partial_{x_1}(\Delta v_3)_{\neq}\big\|^2_{X_{ed}}+\big\|\partial_{x_2}(\Delta v_3)_{\neq}\big\|^2_{X_{ed}}+\big\|\partial_{x_1}\omega_3\big\|^2_{X_{ed}}
+\big\|\partial_{x_2}\omega_3\big\|^2_{X_{ed}}\big)\mathcal{E}^2_2\\
&+C\nu^{-2}\big(\big\|\partial_{x_1}\omega_3\big\|^2_{X_{ed}}+\big\|\partial_{x_2}\omega_3\big\|^2_{X_{ed}}\big)\mathcal{E}^2_1.
\end{aligned}
\end{equation}
Next, we consider
$$
\big\|P_0\Delta v_3\big\|^2_{X_1}+\big\|\partial_{x_1}\Delta v_3\big\|^2_{X_1}+\big\|\partial_{x_2}\Delta v_3\big\|^2_{X_1}.
$$
Applying $P_0$ to the first equation of \eqref{eq:4.1}, we can easily get
$$
(\partial_t-\nu \Delta)P_0\Delta v_3=\div P_0 f,
$$
then we deduce by energy estimate that
$$
\big\|P_0\Delta v_3\big\|^2_{X_1}\lesssim\big\|P_0\Delta v_3(0)\big\|^2_{L^2}+\nu^{-1}\big\|P_0f\big\|^2_{L^2L^2}.
$$
Combining \eqref{eq:4.2} and Lemma \ref{lem:4.4}, one has
$$
\begin{aligned}
\big\|f\big\|^2_{L^2L^2}&\lesssim \big\|\nabla(V\cdot\nabla v_3)\big\|^2_{L^2L^2}
+\big\|\Delta p\big\|^2_{L^2L^2}\\
&\lesssim\int_0^{T}\big(\big\|\nabla\partial_{x_1}\omega_3\big\|^2_{L^2}+\big\|\nabla\partial_{x_2}\omega_3\big\|^2_{L^2}
+\big\|\nabla\Delta v_3\big\|^2_{L^2}\big)\big\|\Delta v_3\big\|^2_{L^2}dt\\
&\ \ \ +\int_0^{T}\big(\big\|\partial_{x_1}\omega_3\big\|^2_{L^2}+\big\|\partial_{x_2}\omega_3\big\|^2_{L^2}
+\big\|(\Delta v_3)_{\neq}\big\|^2_{L^2}\big)\cdot\big(\big\|V\big\|^2_{H^2}+\big\|\nabla\Delta v_3\big\|^2_{L^2}\big)dt.
\end{aligned}
$$
By the definition of $\|\cdot\|_{X_0}$, $\|\cdot\|_{X_{ed}}$, $\mathcal{E}_1$ and $\mathcal{E}_2$, one gets
$$
\begin{aligned}
&\int_0^{T}\big(\big\|\nabla\partial_{x_1}\omega_3\big\|^2_{L^2}+\big\|\nabla\partial_{x_2}\omega_3\big\|^2_{L^2}
+\big\|\nabla\Delta v_3\big\|^2_{L^2}\big)\big\|\Delta v_3\big\|^2_{L^2}dt\\
\lesssim&\big(\big\|\nabla\partial_{x_1}\omega_3\big\|^2_{L^2L^2}+\big\|\nabla\partial_{x_2}\omega_3\big\|^2_{L^2L^2}
+\big\|\nabla\Delta v_3\big\|^2_{L^2L^2}\big)\big\|v_3\big\|^2_{L^\infty X_0}\\
\lesssim&\nu^{-1}\big(\big\|\partial_{x_1}\omega_3\big\|^2_{X_{ed}}+\big\|\partial_{x_2}\omega_3\big\|^2_{X_{ed}}
+\big\|P_0\Delta v_3\big\|^2_{X_1}+\big\|\partial_{x_1}\Delta v_3\big\|^2_{X_1}+\big\|\partial_{x_2}\Delta v_3\big\|^2_{X_1}\big)\mathcal{E}^2_1,
\end{aligned}
$$
and
$$
\begin{aligned}
&\int_0^{T}\big(\big\|\partial_{x_1}\omega_3\big\|^2_{L^2}+\big\|\partial_{x_2}\omega_3\big\|^2_{L^2}
+\big\|(\Delta v_3)_{\neq}\big\|^2_{L^2}\big)\cdot\big(\big\|V\big\|^2_{H^2}+\big\|\nabla\Delta v_3\big\|^2_{L^2}\big)dt\\
\lesssim&\big(\big\|\partial_{x_1}\omega_3\big\|^2_{L^2L^2}+\big\|\partial_{x_2}\omega_3\big\|^2_{L^2L^2}
+\big\|\partial_{x_1}(\Delta v_3)_{\neq}\big\|^2_{L^2L^2}+\big\|\partial_{x_2}(\Delta v_3)_{\neq}\big\|^2_{L^2L^2}\big)\big\|V\big\|^2_{L^\infty X_0}\\
&+\big(\big\|\partial_{x_1}\omega_3\big\|^2_{L^\infty L^2}+\big\|\partial_{x_2}\omega_3\big\|^2_{L^\infty L^2}
+\big\|\partial_{x_1}(\Delta v_3)_{\neq}\big\|^2_{L^\infty L^2}+\big\|\partial_{x_2}(\Delta v_3)_{\neq}\big\|^2_{L^\infty L^2}\big)\big\|\nabla\Delta v_3\big\|^2_{L^2 L^2}\\
\lesssim &\nu^{-1/2}\big(\big\|\partial_{x_1}(\Delta v_3)_{\neq}\big\|^2_{X_{ed}}+\big\|\partial_{x_2}(\Delta v_3)_{\neq}\big\|^2_{X_{ed}}
+\big\|\partial_{x_1}\omega_3\big\|^2_{X_{ed}}+\big\|\partial_{x_2}\omega_3\big\|^2_{X_{ed}}\big)\mathcal{E}^2_2\\
&+\nu^{-1}\big(\big\|P_0\Delta v_3\big\|^2_{X_1}+\big\|\partial_{x_1}\Delta v_3\big\|^2_{X_1}+\big\|\partial_{x_2}\Delta v_3\big\|^2_{X_1}\big)\mathcal{E}^2_1.
\end{aligned}
$$
Thus, we have
\begin{equation}\label{eq:4.52}
\begin{aligned}
\big\|P_0\Delta v_3\big\|^2_{X_1}
\lesssim& \big\|\Delta v_3(0)\big\|^2_{L^2}+\nu^{-1}\big\|f\big\|^2_{L^2L^2}\\
\lesssim & \big\|V_0\big\|^2_{X_0}+\nu^{-2}\big(\big\|\partial_{x_1}\omega_3\big\|^2_{X_{ed}}
+\big\|\partial_{x_2}\omega_3\big\|^2_{X_{ed}}\\
&+\big\|P_0\Delta v_3\big\|^2_{X_1}+\big\|\partial_{x_1}\Delta v_3\big\|^2_{X_1}+\big\|\partial_{x_2}\Delta v_3\big\|^2_{X_1}\big)\mathcal{E}^2_1\\
&+\nu^{-3/2}\big(\big\|\partial_{x_1}(\Delta v_3)_{\neq}\big\|^2_{X_{ed}}+\big\|\partial_{x_2}(\Delta v_3)_{\neq}\big\|^2_{X_{ed}}
+\big\|\partial_{x_1}\omega_3\big\|^2_{X_{ed}}\\
&+\big\|\partial_{x_2}\omega_3\big\|^2_{X_{ed}}\big)\mathcal{E}^2_2
+\nu^{-2}\big(\big\|P_0\Delta v_3\big\|^2_{X_1}+\big\|\partial_{x_1}\Delta v_3\big\|^2_{X_1}+\big\|\partial_{x_2}\Delta v_3\big\|^2_{X_1}\big)\mathcal{E}^2_1.
\end{aligned}
\end{equation}
Since
$$
\big\|P_0\Delta v_3\big\|^2_{X_1}+\big\|\partial_{x_1}\Delta v_3\big\|^2_{X_1}+\big\|\partial_{x_2}\Delta v_3\big\|^2_{X_1}
\lesssim\big\|P_0\Delta v_3\big\|^2_{X_1}+\big\|\partial_{x_1}\Delta v_3\big\|^2_{X_{ed}}+\big\|\partial_{x_2}\Delta v_3\big\|^2_{X_{ed}},
$$
and combining \eqref{eq:4.51} and \eqref{eq:4.52}, we have
$$
\begin{aligned}
&\big\|P_0\Delta v_3\big\|^2_{X_1}+\big\|\partial_{x_1}\Delta v_3\big\|^2_{X_1}+\big\|\partial_{x_2}\Delta v_3\big\|^2_{X_1}+\big\|\partial_{x_1}(\Delta v_3)_{\neq}\big\|^2_{X_{ed}}\\
&+\big\|\partial_{x_2}(\Delta v_3)_{\neq}\big\|^2_{X_{ed}}+\big\|\partial_{x_1}\omega_3\big\|^2_{X_{ed}}
+\big\|\partial_{x_2}\omega_3\big\|^2_{X_{ed}}\\
&+\nu^{-1}\left\|e^{\epsilon\nu^{1/2}t}\partial_{x_1}\Delta p\right\|^2_{L^2 L^2}+\nu^{-1}\left\|e^{\epsilon\nu^{1/2}t}\partial_{x_2}\Delta p\right\|^2_{L^2 L^2}\\
\leq& C\big\|V_0\big\|^2_{X_0}+C\big(\nu^{-3/2}\mathcal{E}_2^2+\nu^{-2}\mathcal{E}_1^2\big)
\big(\big\|\partial_{x_1}\omega_3\big\|^2_{X_{ed}}
+\big\|\partial_{x_2}\omega_3\big\|^2_{X_{ed}}\big)\\
&+C\nu^{-2}\mathcal{E}_1^2\big(\big\|P_0\Delta v_3\big\|^2_{X_1}+\big\|\partial_{x_1}\Delta v_3\big\|^2_{X_1}+\big\|\partial_{x_2}\Delta v_3\big\|^2_{X_1}\big)\\
&+C\nu^{-3/2}\mathcal{E}_2^2\big(\big\|\partial_{x_1}(\Delta v_3)_{\neq}\big\|^2_{X_{ed}}+\big\|\partial_{x_2}(\Delta v_3)_{\neq}\big\|^2_{X_{ed}}\big)\\
\leq& C\big\|V_0\big\|^2_{X_0}+C\big(\nu^{-2}\mathcal{E}_1^2+\nu^{-3/2}\mathcal{E}_2^2\big)\big(\big\|P_0\Delta v_3\big\|^2_{X_1}+\big\|\partial_{x_1}\Delta v_3\big\|^2_{X_1}+\big\|\partial_{x_2}\Delta v_3\big\|^2_{X_1}\\
&+\big\|\partial_{x_1}(\Delta v_3)_{\neq}\big\|^2_{X_{ed}}+\big\|\partial_{x_2}(\Delta v_3)_{\neq}\big\|^2_{X_{ed}}+\big\|\partial_{x_1}\omega_3\big\|^2_{X_{ed}}
+\big\|\partial_{x_2}\omega_3\big\|^2_{X_{ed}}\big).
\end{aligned}
$$
Then there exists $\varepsilon_1\in (0,1)$, for $\mathcal{E}_1\leq\varepsilon_1\nu, \mathcal{E}_2\leq\varepsilon_1\nu^{3/4}$, such that
$$
C\big(\nu^{-2}\mathcal{E}_1^2+\nu^{-3/2}\mathcal{E}_2^2\big)\leq1/2,
$$
thus we get
$$
\begin{aligned}
&\big\|P_0\Delta v_3\big\|^2_{X_1}+\big\|\partial_{x_1}\Delta v_3\big\|^2_{X_1}+\big\|\partial_{x_2}\Delta v_3\big\|^2_{X_1}
+\nu^{-1}\left\|e^{\epsilon\nu^{1/2}t}\partial_{x_1}\Delta p\right\|^2_{L^2 L^2}\\
&+\nu^{-1}\left\|e^{\epsilon\nu^{1/2}t}\partial_{x_2}\Delta p\right\|^2_{L^2 L^2}
+\big\|\partial_{x_1}(\Delta v_3)_{\neq}\big\|^2_{X_{ed}}\\
&+\big\|\partial_{x_2}(\Delta v_3)_{\neq}\big\|^2_{X_{ed}}+\big\|\partial_{x_1}\omega_3\big\|^2_{X_{ed}}
+\big\|\partial_{x_2}\omega_3\big\|^2_{X_{ed}}\lesssim \big\|V_0\big\|^2_{X_0}.
\end{aligned}
$$

\vskip .05in

\noindent {\bf Step 2. The estimate of $\mathcal{M}_2$.} Here we mainly consider
$$
\big\|\partial_{x_1}(\Delta v_1)\big\|^2_{X_{ed}}+\big\|\partial_{x_2}(\Delta v_1)\big\|^2_{X_{ed}}
+\big\|\partial_{x_1}(\Delta v_2)\big\|^2_{X_{ed}}+\big\|\partial_{x_2}(\Delta v_2)\big\|^2_{X_{ed}}.
$$
From the first equation in \eqref{eq:1.3}, we obtain that
\begin{equation}\label{eq:4.53}
\left(\partial_t+\mathcal{H}\right)\Delta v_1=\div F,
\end{equation}
where
$$
F=-\nabla\left(\delta\cos(m_0 y)v_3\right)+(F_1,F_2,F_3)-\nabla(V\cdot\nabla v_1),
$$
and
$$
\begin{aligned}
F_1&=-\Delta p+\sin (m_0 y)v_1+2\delta\cos(m_0 y)\partial_{x_1}v_3-2\delta\sin(m_0 y)\partial_{x_2}v_3,\\
F_2&=\cos (m_0 y)v_1,\\
F_3&=2\delta\sin(m_0 y)\partial_{x_2}v_1-2\delta\cos (m_0 y)\partial_{x_1}v_1.
\end{aligned}
$$
Similarly, one has
\begin{equation}\label{eq:4.54}
\left(\partial_t+\mathcal{H}\right)\Delta v_2=\div G,
\end{equation}
where
$$
G=\nabla\left(\delta\sin(m_0 y)v_3\right)+(G_1,G_2,G_3)-\nabla(V\cdot\nabla v_2),
$$
and
$$
\begin{aligned}
G_1&=\sin (m_0 y)v_2,\\
G_2&=-\Delta p+\cos (m_0 y)v_2+2\delta\cos(m_0 y)\partial_{x_1}v_3-2\delta\sin(m_0 y)\partial_{x_2}v_3,\\
G_3&=2\delta\sin (m_0 y)\partial_{x_2}v_2-2\delta\cos (m_0 y)\partial_{x_1}v_2.
\end{aligned}
$$
Applying the operator $\partial_{x_1}$ to \eqref{eq:4.53} and \eqref{eq:4.54}, one has
$$
\left(\partial_t+\mathcal{H}\right)\partial_{x_1}\Delta v_1=\div \partial_{x_1}F,\ \ \ \left(\partial_t+\mathcal{H}\right)\partial_{x_1}\Delta v_2=\div \partial_{x_1}G.
$$
We deduce by Lemma \ref{lem:2.4} that
$$
\big\|\partial_{x_1}(\Delta v_1)\big\|^2_{X_{ed}}\lesssim \big\|\partial_{x_1}(\Delta v_1)(0)\big\|^2_{L^2}+\nu^{-1}\left\|e^{\epsilon\nu^{1/2}t}\partial_{x_1}F\right\|^2_{L^2L^2},
$$
and
$$
\big\|\partial_{x_1}(\Delta v_2)\big\|^2_{X_{ed}}\lesssim \big\|\partial_{x_1}(\Delta v_2)(0)\big\|^2_{L^2}+\nu^{-1}\left\|e^{\epsilon\nu^{1/2}t}\partial_{x_1}G\right\|^2_{L^2L^2}.
$$
Similarly, we also have
$$
\big\|\partial_{x_2}(\Delta v_1)\big\|^2_{X_{ed}}\lesssim \big\|\partial_{x_2}(\Delta v_1)(0)\big\|^2_{L^2}+\nu^{-1}\left\|e^{\epsilon\nu^{1/2}t}\partial_{x_2}F\right\|^2_{L^2L^2},
$$
and
$$
\big\|\partial_{x_2}(\Delta v_2)\big\|^2_{X_{ed}}\lesssim \big\|\partial_{x_2}(\Delta v_2)(0)\big\|^2_{L^2}+\nu^{-1}\left\|e^{\epsilon\nu^{1/2}t}\partial_{x_2}G\right\|^2_{L^2L^2}.
$$
Thus one gets
\begin{equation}\label{eq:4.55}
\begin{aligned}
&\big\|\partial_{x_1}(\Delta v_1)\big\|^2_{X_{ed}}+\big\|\partial_{x_2}(\Delta v_1)\big\|^2_{X_{ed}}
+\big\|\partial_{x_1}(\Delta v_2)\big\|^2_{X_{ed}}+\big\|\partial_{x_2}(\Delta v_2)\big\|^2_{X_{ed}}\\
\lesssim&\big\|\partial_{x_1}(\Delta v_1)(0)\big\|^2_{L^2}+\big\|\partial_{x_2}(\Delta v_1)(0)\big\|^2_{L^2}
+\big\|\partial_{x_1}(\Delta v_2)(0)\big\|^2_{L^2}+\big\|\partial_{x_2}(\Delta v_2)(0)\big\|^2_{L^2}\\
&+\nu^{-1}\left(\left\|e^{\epsilon\nu^{1/2}t}\partial_{x_1}F\right\|^2_{L^2L^2}
+\left\|e^{\epsilon\nu^{1/2}t}\partial_{x_2}F\right\|^2_{L^2L^2}\right)\\
&+\nu^{-1}\left(\left\|e^{\epsilon\nu^{1/2}t}\partial_{x_1}G\right\|^2_{L^2L^2}
+\left\|e^{\epsilon\nu^{1/2}t}\partial_{x_2}G\right\|^2_{L^2L^2}\right).
\end{aligned}
\end{equation}
Combining Lemma \ref{lem:4.3} and \ref{lem:4.4}, to obtain
$$
\begin{aligned}
\big\|\partial_{x_1}F\big\|_{L^2}
&\lesssim\big\| \nabla\left(\delta\cos(m_0 y)\partial_{x_1}v_3\right)\big\|_{L^2}
+\big\|\partial_{x_1}F_1\big\|_{L^2}\\
&\ \ \ +\big\|\partial_{x_1}F_2\big\|_{L^2}+\big\|\partial_{x_1}F_3\big\|_{L^2}
+\big\|\partial_{x_1}\nabla(V\cdot\nabla v_1)\big\|_{L^2}\\
&\lesssim\big\|\partial_{x_1}\nabla(V\cdot\nabla V)\big\|_{L^2}+\big\|\partial_{x_1}(\Delta p)\big\|_{L^2}\\
&\ \ \ +\big\|\partial^2_{x_1}v_1\big\|_{L^2}+\big\|\partial_{x_1}\partial_{x_2}v_1\big\|_{L^2}+\big\|\partial_{x_1}v_3\big\|_{H^1}\\
&\lesssim\big\|\partial_{x_1}V\big\|_{H^2}\big\|\Delta V_{\neq}\big\|_{L^2}+\big\|V\big\|_{H^2}\big\|\partial_{x_1}\Delta V_{\neq}\big\|_{L^2}+\big\|\partial_{x_1}\Delta p\big\|_{L^2}\\
&\  \ \ +\big\|\partial_{x_1}\omega_3\big\|_{L^2}+\big\|\partial_{x_2}\omega_3\big\|_{L^2}+\big\|(\Delta v_3)_{\neq}\big\|_{L^2},
\end{aligned}
$$
and
$$
\begin{aligned}
\big\|\partial_{x_1}G\big\|_{L^2}\lesssim&\big\| \nabla\left( \delta\sin(m_0 y)\partial_{x_1}v_3\right)_{\neq}\big\|_{L^2}
+\big\|\partial_{x_1}G_1\big\|_{L^2}\\
&+\big\|\partial_{x_1}G_2\big\|_{L^2}+\big\|\partial_{x_1}G_3\big\|_{L^2}+\big\|\partial_{x_1}\nabla(V\cdot\nabla v_2)\big\|_{L^2}\\
\lesssim&\big\|\partial_{x_1}V\big\|_{H^2}\big\|\Delta V_{\neq}\big\|_{L^2}+\big\|V\big\|_{H^2}\big\|\partial_{x_1}\Delta V_{\neq}\big\|_{L^2}+\big\|\partial_{x_1}\Delta p\big\|_{L^2}\\
&+\big\|\partial_{x_1}\omega_3\big\|_{L^2}+\big\|\partial_{x_2}\omega_3\big\|_{L^2}+\big\|(\Delta v_3)_{\neq}\big\|_{L^2}.
\end{aligned}
$$
Similarly, one has
$$
\begin{aligned}
\big\|\partial_{x_2}F\big\|_{L^2}
&\lesssim\big\|\partial_{x_2}V\big\|_{H^2}\big\|\Delta V_{\neq}\big\|_{L^2}+\big\|V\big\|_{H^2}\big\|\partial_{x_2}\Delta V_{\neq}\big\|_{L^2}+\big\|\partial_{x_2}\Delta p\big\|_{L^2}\\
&\ \ \ \ +\big\|\partial_{x_1}\omega_3\big\|_{L^2}+\big\|\partial_{x_2}\omega_3\big\|_{L^2}+\big\|(\Delta v_3)_{\neq}\big\|_{L^2},
\end{aligned}
$$
and
$$
\begin{aligned}
\big\|\partial_{x_2}G\big\|_{L^2}
\lesssim&\big\|\partial_{x_2}V\big\|_{H^2}\big\|\Delta V_{\neq}\big\|_{L^2}+\big\|V\big\|_{H^2}\big\|\partial_{x_2}\Delta V_{\neq}\big\|_{L^2}+\big\|\partial_{x_2}\Delta p\big\|_{L^2}\\
&+\big\|\partial_{x_1}\omega_3\big\|_{L^2}+\big\|\partial_{x_2}\omega_3\big\|_{L^2}+\big\|(\Delta v_3)_{\neq}\big\|_{L^2}.
\end{aligned}
$$
Then we have
\begin{equation}\label{eq:4.56}
\begin{aligned}
&\left\|e^{\epsilon\nu^{1/2}t}\partial_{x_1}F\right\|^2_{L^2L^2}
+\left\|e^{\epsilon\nu^{1/2}t}\partial_{x_2}F\right\|^2_{L^2L^2}
+\left\|e^{\epsilon\nu^{1/2}t}\partial_{x_1}G\right\|^2_{L^2L^2}
+\left\|e^{\epsilon\nu^{1/2}t}\partial_{x_2}G\right\|^2_{L^2L^2}\\
\lesssim&\left(\left\|e^{\epsilon\nu^{1/2}t}\partial_{x_1}\Delta V\right\|^2_{L^2L^2}
+\left\|e^{\epsilon\nu^{1/2}t}\partial_{x_2}\Delta V\right\|^2_{L^2L^2}\right)\big\|V\big\|^2_{L^\infty X_0}\\
&+\left\|e^{\epsilon\nu^{1/2}t}\partial_{x_1}\omega_3\right\|^2_{L^2 L^2}+\left\|e^{\epsilon\nu^{1/2}t}\partial_{x_2}\omega_3\right\|^2_{L^2 L^2}\\
&+\left\|e^{\epsilon\nu^{1/2}t}\partial_{x_1}(\Delta v_3)_{\neq}\right\|^2_{L^2 L^2}
+\left\|e^{\epsilon\nu^{1/2}t}\partial_{x_2}(\Delta v_3)_{\neq}\right\|^2_{L^2 L^2}\\
&+\left\|e^{\epsilon\nu^{1/2}t}\partial_{x_1}\Delta p\right\|^2_{L^2 L^2}+\left\|e^{\epsilon\nu^{1/2}t}\partial_{x_2}\Delta p\right\|^2_{L^2 L^2}\\
\lesssim& \nu^{-1/2}\left(\big\|\partial_{x_1}\Delta V\big\|^2_{X_{ed}}
+\big\|\partial_{x_2}\Delta V\big\|^2_{X_{ed}}\right)\mathcal{E}^2_2\\
&+\nu^{-1/2}
\left(\big\|\partial_{x_1}\omega_3\big\|^2_{X_{ed}}+\big\|\partial_{x_2}\omega_3\big\|^2_{X_{ed}}
+\big\|\partial_{x_1}(\Delta v_3)_{\neq}\big\|^2_{X_{ed}}
+\big\|\partial_{x_2}(\Delta v_3)_{\neq}\big\|^2_{X_{ed}}\right)\\
&+\left\|e^{\epsilon\nu^{1/2}t}\partial_{x_1}\Delta p\right\|^2_{L^2 L^2}+\left\|e^{\epsilon\nu^{1/2}t}\partial_{x_2}\Delta p\right\|^2_{L^2 L^2}.
\end{aligned}
\end{equation}
Combining the estimate of $\mathcal{M}_1$, \eqref{eq:4.55} and \eqref{eq:4.56}, we obtain
$$
\begin{aligned}
&\big\|\partial_{x_1}(\Delta V)\big\|^2_{X_{ed}}+\big\|\partial_{x_2}(\Delta V)\big\|^2_{X_{ed}}\\
\leq& C\big\|V_0\big\|^2_{X_0}+C\nu^{-3/2}\mathcal{E}^2_2\left(\big\|\partial_{x_1}(\Delta V)\big\|^2_{X_{ed}}+\big\|\partial_{x_2}(\Delta V)\big\|^2_{X_{ed}}\right)\\
&+C\nu^{-3/2}\big\|V_0\big\|^2_{X_0}+C\nu^{-1}\left\|e^{\epsilon\nu^{1/2}t}\partial_{x_1}\Delta p\right\|^2_{L^2 L^2}+C\nu^{-1}\left\|e^{\epsilon\nu^{1/2}t}\partial_{x_2}\Delta p\right\|^2_{L^2 L^2}\\
\leq&C\nu^{-3/2}\big\|V_0\big\|^2_{X_0}+C\nu^{-3/2}\mathcal{E}^2_2\left(\big\|\partial_{x_1}(\Delta V)\big\|^2_{X_{ed}}+\big\|\partial_{x_2}(\Delta V)\big\|^2_{X_{ed}}\right).
\end{aligned}
$$
Since $\mathcal{E}_2\leq \varepsilon_1\nu^{3/4}$ for some $\varepsilon_1\in (0,1)$, such that
$$
C\nu^{-3/2}\mathcal{E}^2_2\leq 1/2,
$$
then we have
$$
\big\|\partial_{x_1}\Delta V\big\|^2_{X_{ed}}+\big\|\partial_{x_2}\Delta V\big\|^2_{X_{ed}}
\lesssim \nu^{-3/2}\big\|V_0\big\|^2_{X_0}.
$$

\vskip .05in

\noindent {\bf Step 3. The estimate of $\mathcal{E}_1(T)$.} By the definition of $\|\cdot\|_{X_0}$, $\|\cdot\|_{X_1}$, $\|\cdot\|_{X_{ed}}$ and the estimate of $\mathcal{M}_1$, we imply that
$$
\begin{aligned}
\big\|v_3\big\|_{X_0}&=\big\|P_0v_3\big\|_{H^2}+\big\|\partial_{x_1}v_3\big\|_{H^2}+\big\|\partial_{x_2}v_3\big\|_{H^2}\\
&\lesssim\big\|P_0\Delta v_3\big\|_{X_1}+\big\|\partial_{x_1}\Delta v_3\big\|_{X_1}+\big\|\partial_{x_2}\Delta v_3\big\|_{X_1}\lesssim\big\|V_0\big\|^2_{X_0},
\end{aligned}
$$
and
$$
\begin{aligned}
&e^{\epsilon\nu^{1/2}t}\left(\big\|\partial_{x_1}(\Delta v_3)\big\|_{L^2}+\big\|\partial_{x_2}(\Delta v_3)\big\|_{L^2}\right)
+e^{\epsilon\nu^{1/2}t}\left(\big\|\partial_{x_1}\omega_3\big\|_{L^2}
+\big\|\partial_{x_2}\omega_3\big\|_{L^2}\right)\\
\lesssim&\big\|\partial_{x_1}(\Delta v_3)\big\|_{X_{ed}}+\big\|\partial_{x_2}(\Delta v_3)\big\|_{X_{ed}}+\big\|\partial_{x_1}\omega_3\big\|_{X_{ed}}
+\big\|\partial_{x_2}\omega_3\big\|_{X_{ed}}\lesssim \big\|V_0\big\|_{X_0},
\end{aligned}
$$
then we have
\begin{equation}\label{eq:4.57}
\mathcal{E}_1(T)\lesssim \big\|V_0\big\|_{X_0}.
\end{equation}

\vskip .05in

\noindent {\bf Step 4. The estimate of $\mathcal{E}_2(T)$.} Based on the estimate of $\mathcal{M}_1$ and $\mathcal{M}_2$,  we need only to estimate $\|\Delta P_0 v_1\|_{L^2}$ and $\|\Delta P_0 v_2\|_{L^2}$. Applying the operator $P_0$ to \eqref{eq:4.53}, one gets 
\begin{equation}\label{eq:4.58}
(\partial_t-\nu\Delta)\partial^2_yP_0v_1=\div \widetilde{F},
\end{equation}
where
$$
\widetilde{F}=-\nabla\left(\delta\cos(m_0 y)P_0v_3\right)-\nabla P_0(V\cdot\nabla v_1).
$$
By energy estimate, one has
$$
\big\|\Delta P_0v_1\big\|^2_{X_1}\lesssim\big\|\Delta P_0v_1(0)\big\|^2_{L^2}+\nu^{-1}\big\|\widetilde{F}\big\|^2_{L^2L^2}.
$$
Combining Lemma \ref{lem:2.5} and \ref{lem:4.4}, one gets
$$
\begin{aligned}
\big\|\widetilde{F}\big\|_{L^2}
&\lesssim
\big\|\nabla\left(\delta\cos(m_0 y)P_0v_3\right)\big\|_{L^2}+\big\|\nabla P_0(V\cdot\nabla v_1)\big\|_{L^2}\\
&\lesssim\big\|\partial^2_yP_0v_3\big\|_{L^2}+\big\|P_0\left(V_{\neq}\cdot\nabla(v_1)_{\neq}\right)\big\|_{H^1}
+\big\|P_0V\cdot \nabla P_0v_1\big\|_{H^1}\\
&\lesssim\big\|\nabla\Delta P_0v_3\big\|_{L^2}+\big\|V\big\|_{H^2}\big\|\Delta V_{\neq}\big\|_{L^2}+\big\|v_3\big\|_{H^2}\big\|\nabla\Delta P_0v_1\big\|_{L^2},
\end{aligned}
$$
so we can easily get
$$
\begin{aligned}
\big\|\widetilde{F}\big\|^2_{L^2L^2}
&\lesssim\big\|\nabla\Delta P_0v_3\big\|^2_{L^2L^2}+\big\|V\big\|^2_{L^\infty H^2}\left\|e^{\epsilon\nu^{1/2}t}\Delta V_{\neq}\right\|^2_{L^2 L^2}+\big\|v_3\big\|^2_{L^\infty H^2}\big\|\nabla\Delta P_0v_1\big\|^2_{L^2 L^2}\\
&\lesssim \nu^{-1}\big\|P_0\Delta v_3\big\|^2_{X_1}+\nu^{-1/2}\left(\big\|\Delta \partial_{x_1}V_{\neq}\big\|^2_{X_{ed}}+\big\|\Delta \partial_{x_2}V_{\neq}\big\|^2_{X_{ed}}\right)\big\|V\big\|^2_{L^\infty X_0}\\
&\ \ \ \ +\nu^{-1}\big\|\Delta P_0v_1\big\|^2_{X_1}\big\|v_3\big\|^2_{L^\infty X_0}.
\end{aligned}
$$
Thus, we have
$$
\begin{aligned}
\big\|\Delta P_0v_1\big\|^2_{X_1}&\leq C\big\|\Delta P_0v_1(0)\big\|^2_{L^2}+C\nu^{-1}\big\|\widetilde{F}\big\|^2_{L^2L^2}\\
&\leq C\big\|\Delta P_0v_1(0)\big\|^2_{L^2}+C\nu^{-2}\big\|P_0\Delta v_3\big\|^2_{X_1}+C\nu^{-2}\mathcal{E}^2_1\big\|\Delta P_0v_1\big\|^2_{X_1}\\
&\ \ \ +C\nu^{-3/2}\mathcal{E}^2_2\left(\big\|\Delta \partial_{x_1}V_{\neq}\big\|^2_{X_{ed}}+\big\|\Delta \partial_{x_2}V_{\neq}\big\|^2_{X_{ed}}\right).
\end{aligned}
$$
Since $\mathcal{E}_1\leq \varepsilon_1\nu, \mathcal{E}_2\leq \varepsilon_1\nu^{3/4}$ for some $\varepsilon_1\in (0,1)$, such that
$$
C\nu^{-2}\mathcal{E}^2_1\leq 1/2,
$$
and combining the estimate of $\mathcal{M}_1$ and $\mathcal{M}_2$, we obtain
\begin{equation}\label{eq:4.59}
\begin{aligned}
\big\|\Delta P_0v_1\big\|^2_{X_1}
&\lesssim \big\|\Delta P_0v_1(0)\big\|^2_{L^2}+\nu^{-2}\big\|P_0\Delta v_3\big\|^2_{X_1}\\
&\ \ \ +\left(\big\|\Delta \partial_{x_1}V_{\neq}\big\|^2_{X_{ed}}+\big\|\Delta \partial_{x_2}V_{\neq}\big\|^2_{X_{ed}}\right)\\
&\lesssim\big\|V_0\big\|^2_{X_0}+\nu^{-2}\big\|V_0\big\|^2_{X_0}+\nu^{-3/2}\big\|V_0\big\|^2_{X_0}\\
&\lesssim\nu^{-2}\big\|V_0\big\|^2_{X_0}.
\end{aligned}
\end{equation}
Applying the operator $P_0$ to \eqref{eq:4.54}, we obtain
$$
(\partial_t-\nu\Delta)\partial^2_yP_0v_2=\div \widetilde{G},
$$
where
$$
\widetilde{G}=-\nabla\left( \delta\sin(m_0 y)P_0v_3\right)-\nabla P_0(V\cdot\nabla v_2).
$$
Similar to \eqref{eq:4.58} and \eqref{eq:4.59}, we have
\begin{equation}\label{eq:4.60}
\big\|\Delta P_0v_2\big\|^2_{X_1}
\lesssim \nu^{-2}\big\|V_0\big\|^2_{X_0}.
\end{equation}
Then we deduce by the estimate of $\mathcal{M}_1$, $\mathcal{M}_2$, \eqref{eq:4.59} and \eqref{eq:4.60} that
$$
\begin{aligned}
\big\|V\big\|^2_{X_0}&\lesssim\big\|\Delta P_0 v_1\big\|^2_{L^2}+\big\|\Delta P_0 v_2\big\|^2_{L^2}+\big\|\Delta P_0 v_3\big\|^2_{L^2}
+\big\|\partial_{x_1}\Delta V\big\|^2_{L^2}+\big\|\partial_{x_2}\Delta V\big\|^2_{L^2}\\
&\lesssim\big\|\Delta P_0 v_1\big\|^2_{X_1}+\big\|\Delta P_0 v_2\big\|^2_{X_1}+\big\|\Delta P_0 v_3\big\|^2_{X_1}
+\big\|\partial_{x_1}\Delta V\big\|^2_{X_{ed}}+\big\|\partial_{x_2}\Delta V\big\|^2_{X_{ed}}\\
&\lesssim \big\|V_0\big\|^2_{X_0}+\nu^{-3/2}\big\|V_0\big\|^2_{X_0}+\nu^{-2}\big\|V_0\big\|^2_{X_0}\\
&\lesssim \nu^{-2}\big\|V_0\big\|^2_{X_0}.
\end{aligned}
$$
Thus, we have
\begin{equation}\label{eq:4.61}
\mathcal{E}_2(T)\lesssim\nu^{-1}\big\|V_0\big\|_{X_0}.
\end{equation}
Combining \eqref{eq:4.57} and \eqref{eq:4.61}, we know that there exists a positive constant $C_0$, such that
$$
\mathcal{E}_1(T)\leq C_0\big\|V_0\big\|_{X_0},\ \ \ \mathcal{E}_2(T)\leq C_0\nu^{-1}\big\|V_0\big\|_{X_0}.
$$
This completes the proof of Proposition \ref{prop:4.5}.
\end{proof}

\vskip .05in

\begin{remark}
In Proposition {\rm \ref{prop:4.5}}, we know that zero mode estimate
$$
\big\|\Delta P_0 v_1\big\|^2_{L^2}+\big\|\Delta P_0 v_2\big\|^2_{L^2}\lesssim\nu^{-2}\big\|V_0\big\|^2_{X_0},\ \ \
\big\|\Delta P_0 v_3\big\|^2_{L^2}\lesssim \big\|V_0\big\|^2_{X_0},
$$
and non-zero mode estimate
$$
\big\|\partial_{x_1}\Delta V\big\|^2_{L^2}+\big\|\partial_{x_2}\Delta V\big\|^2_{L^2}
\lesssim \nu^{-3/2}\big\|V_0\big\|^2_{X_0},
$$
that means the estimate of $\|V\|_{X_0}$ depends on the $\|\Delta P_0 v_1\|_{L^2}$ and $\|\Delta P_0 v_2\|_{L^2}$. Thus, the stability threshold of \eqref{eq:1.3}  is determined by the estimates of the $P_0 v_1$ and $P_0 v_2$. In other words, the unstable part of $\eqref{eq:1.3}$ is the zero mode of $v_1$ and $v_2$, which is from the lift-up effect of planar helical flow.
\end{remark}

Next, we prove the Theorem \ref{thm:1.1} based on the Proposition \ref{prop:4.5}, the detail is as follows.

\begin{proof}[The proof of Theorem {\rm \ref{thm:1.1}}]
Let $c_0=\varepsilon_1/2C_0$, and for $\|V_0\|_{X_0}\leq c_0\nu^{7/4}$, we can easily know that
$$
\mathcal{E}_1(0)\leq C_0\big\|V_0\big\|_{X_0}\leq \varepsilon_1\nu/2,\ \ \ \mathcal{E}_2(0)\leq C_0\big\|V_0\big\|_{X_0}\leq \varepsilon_1\nu^{3/4}/2.
$$
Therefore, there exists $T>0$, such that
$$
\mathcal{E}_1(T)\leq\varepsilon_1\nu,\ \ \ \ \mathcal{E}_2(T)\leq\varepsilon_1\nu^{3/4}.
$$
Then we imply by Proposition \ref{prop:4.5} that
$$
\mathcal{E}_1(T)\leq C_0\big\|V_0\big\|_{X_0}\leq \varepsilon_1\nu/2,
$$
and
$$
 \mathcal{E}_2(T)\leq C_0\nu^{-1}\big\|V_0\big\|_{X_0}\leq C_0c_0\nu^{3/4}\leq \varepsilon_1\nu^{3/4}/2.
$$
By continuity argument, we imply that
$$
T=\infty.
$$
This completes the proof of Theorem \ref{thm:1.1}.
\end{proof}

\begin{remark}
In this paper, we use continuity methods to extend $ \mathcal{E}_2(T)$ from local to global. The smallness of $V_0$ makes the assumption of $\mathcal{E}_1(T)$ and $\mathcal{E}_2(T)$ locally valid. And combining Proposition {\rm \ref{prop:4.5}} and the condition of $V_0$, the estimate of $\mathcal{E}_2(T)$ is global by continuity argument.
\end{remark}

\begin{remark}
For the constant $c_0,\epsilon,\nu_0$ and positive constant $C_0$ in the Theorem {\rm \ref{thm:1.1}}, we can choose them by the proofs of the Theorem {\rm \ref{thm:1.1}} and  Proposition {\rm \ref{prop:4.5}} that
$$
\varepsilon_1\Rightarrow C_0\Rightarrow c_0=\varepsilon_1/2C_0,
$$
the constant $\epsilon$ is fixed in \eqref{eq:1.15} and choose $\nu_0$, for any $\nu\in (0,\nu_0)$, one has $\nu\ll1$.
\end{remark}

\appendix

\vskip .05in

\section{Linearized Navier-Stokes equation in the case of $\delta=1$}\label{sec.A}

In this section, we study the linearized equation of \eqref{eq:1.5} in the case of $\delta=1$. We consider the linear equation
\begin{equation}\label{eq:A.1}
\begin{cases}
\partial_t u+\left(\nu(k_1^2+k_2^2)+\mathcal{L}_{k_1,k_2}  \right)u=0,\\
\partial_t w+\left(\nu(k_1^2+k_2^2)+\mathcal{H}_{k_1,k_2}  \right)w=-i|k|\sin (y+\alpha_k)(\alpha^2-\partial_y^2)^{-1}u,\\
u(0,y)=u_0(y),\ \ \ w(0,y)=w_0(y),
\end{cases}
\end{equation}
where
$$
\mathcal{L}_{k_1,k_2}=-\nu m^2_0\partial^2_{y}+i|k|\sin (y+\alpha_k)\left(1-(\alpha^2-\partial^2_{y})^{-1}\right),
$$

$$
\mathcal{H}_{k_1,k_2}
=-\nu m^2_0\partial^2_{y}+i|k|\sin (y+\alpha_k),
$$
and
$$
\alpha^2=(k_1^2+k_2^2),\ \ k=(k_1,k_2),\ \ |k|=\sqrt{k_1^2+k_2^2},\ \ \cos \alpha_k=\frac{k_1}{|k|},\ \ \sin \alpha_k=\frac{k_2}{|k|}.
$$
Since $k^2_1+k_2^2\geq 1$, and we know that the situation is similar to $\delta>1$ for the case of $\alpha^2=(k_1^2+k_2^2)>1$, see Section \ref{sec.3}. Thus, we consider the case of $\alpha^2=(k_1^2+k_2^2)=1$ here.

\vskip .05in

Let denote function space
$$
L_0^2(\mathbb{T}_{2\pi})=\left\{u\in L^2(\mathbb{T}_{2\pi})\big| \int_{0}^{2\pi}u(y)dy=0 \right\}.
$$
Definite the $Q_1$ is the orthogonal projection operator, it is as follows
$$
Q_1:L^2(\mathbb{T}_{2\pi}) \rightarrow L_0^2(\mathbb{T}_{2\pi}),
$$
and
$$
P_1=I-Q_1.
$$
Next, we state the following result.

\begin{proposition}\label{prop:A.1}
Let $u$ and $w$ are solutions of \eqref{eq:A.1}. If $|k|^2=k_1^2+k_2^2=1$ and $0<\nu\ll 1$, then we have

\begin{equation}\label{eq:A.2}
\big\|Q_1u(t,\cdot)\big\|_{L^2}\lesssim e^{-c|k|^{1/2}\nu^{1/2}t-\nu t}\big\|Q_1u_0\big\|_{L^2},
\end{equation}

\begin{equation}\label{eq:A.3}
\big\|P_1u(t,\cdot)\big\|_{L^2}\lesssim e^{-\nu t}\big\|P_1 u_0\big\|_{L^2}+e^{-\nu t}\nu^{-1/6}\big\|Q_1 u_0\big\|_{L^2},
\end{equation}

\begin{equation}\label{eq:A.4}
\big\|Q_1 w(t,\cdot)\big\|_{L^2}
\lesssim e^{-c|k|^{1/2}\nu^{1/2}t-\nu t}\big(\big\|w_0\big\|_{L^2}+\big\|u_0\big\|_{L^2}\big),
\end{equation}
and
\begin{equation}\label{eq:A.5}
\begin{aligned}
\big\|P_1 w(t,\cdot)\big\|_{L^2}\lesssim &e^{-c|k|^{1/2}\nu^{1/2}t-\nu t}\big(\big\|w_0\big\|_{L^2}+\big\|u_0\big\|_{L^2}\big)\\
&+e^{-\nu t}\big\|P_1 u_0\big\|_{L^2}+e^{-\nu t}\nu^{-1/6}\big\|Q_1 u_0\big\|_{L^2}.
\end{aligned}
\end{equation}
\end{proposition}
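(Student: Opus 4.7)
The plan is to exploit the orthogonal decomposition $L^2(\mathbb{T}_{2\pi})=P_1L^2\oplus Q_1L^2$. On the constant subspace $P_1L^2$ one has $(1-\partial_y^2)^{-1}=\mathrm{Id}$, so the nonlocal part of $\mathcal{L}_{k_1,k_2}$ kills $P_1u$; on the mean-zero subspace $Q_1L^2=L_0^2(\mathbb{T}_{2\pi})$ the operator $(1-\partial_y^2)^{-1}$ has operator norm at most $1/2$, which plays the role of the gap $1-\alpha^{-2}$ used in Section~\ref{sec.3}. I would handle the four estimates in the order (A.2), (A.4), (A.3), (A.5).

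\textbf{Estimates (A.2) and (A.4).} Because $\mathcal{L}_{k_1,k_2}$ preserves $L_0^2$, applying $Q_1$ to the first equation of \eqref{eq:A.1} gives a closed equation on $L_0^2$. I would redo the proof of Lemma~\ref{lem:3.2} on $L_0^2$: the modified inner product $\langle u,w\rangle_{\ast}=\langle u,w-(1-\partial_y^2)^{-1}w\rangle$ is equivalent to $\langle\cdot,\cdot\rangle$ with $\nu$-independent constants, and Proposition~\ref{prop:3.1} supplies the required resolvent bound. This yields $\Psi(\mathcal{L}_{k_1,k_2}|_{L_0^2})\geq c|k|^{1/2}\nu^{1/2}$, and Lemma~\ref{lem:2.1} combined with the $\nu|k|^2$ damping gives (A.2). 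For (A.4) I would use the algebraic identity $\mathcal{L}_{k_1,k_2}=\mathcal{H}_{k_1,k_2}-i|k|\sin(y+\alpha_k)(1-\partial_y^2)^{-1}$. Adding the two equations in \eqref{eq:A.1} (the nonlocal sources cancel because $\delta=1$) one finds
\begin{equation*}
\partial_t(u+w)+\bigl(\nu|k|^2+\mathcal{H}_{k_1,k_2}\bigr)(u+w)=0,
\end{equation*}
so Lemma~\ref{lem:2.3} gives $\|u+w\|_{L^2}\lesssim e^{-at}(\|u_0\|_{L^2}+\|w_0\|_{L^2})$ with $a=\nu+c|k|^{1/2}\nu^{1/2}$. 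The triangle inequality $\|Q_1w\|_{L^2}\leq\|Q_1(u+w)\|_{L^2}+\|Q_1u\|_{L^2}$ combined with (A.2) then delivers (A.4).

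\textbf{Estimate (A.3).} Projecting the first equation of \eqref{eq:A.1} onto $P_1$ and using $(1-\partial_y^2)^{-1}P_1u=P_1u$, the contribution of $\mathcal{L}_{k_1,k_2}$ on $P_1u$ vanishes and one obtains the forced ODE
\begin{equation*}
\partial_t (P_1u)+\nu P_1u=-P_1\bigl[i|k|\sin(y+\alpha_k)(1-(1-\partial_y^2)^{-1})Q_1u\bigr]=:F(t).
\end{equation*}
Since $\sin(y+\alpha_k)$ has Fourier support $\{\pm 1\}$ and $P_1$ keeps only the zero Fourier mode, $F(t)$ depends solely on the $n=\pm 1$ Fourier coefficients of $Q_1u$, so that $|F(t)|\lesssim|k|\,\|Q_1u(t)\|_{L^2}$. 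Duhamel's formula then gives
\begin{equation*}
\|P_1u(t)\|_{L^2}\lesssim e^{-\nu t}\|P_1u_0\|_{L^2}+e^{-\nu t}\int_0^t e^{\nu s}|F(s)|\,ds.
\end{equation*}
To obtain the factor $\nu^{-1/6}$ I would combine the pointwise bound (A.2) with a time-integrated enhanced-dissipation estimate for $Q_1u$ in the spirit of Lemma~\ref{lem:3.5} (adapted to $L_0^2$), interpolating through Cauchy--Schwarz in $s$ weighted by the Duhamel kernel.

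\textbf{Estimate (A.5).} Projecting the heat-type identity for $u+w$ onto $P_1$ gives $\|P_1(u+w)\|_{L^2}\lesssim e^{-at}(\|u_0\|_{L^2}+\|w_0\|_{L^2})$, and the decomposition $P_1w=P_1(u+w)-P_1u$ together with (A.3) yields (A.5). The main obstacle is the exponent $-1/6$ in (A.3): a direct application of (A.2) in the Duhamel integral produces only $\nu^{-1/2}$, so the improvement must exploit that $F$ only sees the lowest non-trivial Fourier modes of $Q_1u$, for which a sharper space-time energy bound can be extracted from the same hypocoercivity framework that underlies Lemma~\ref{lem:3.5} and Proposition~\ref{prop:3.3}.
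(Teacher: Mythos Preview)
Your treatment of \eqref{eq:A.4} and \eqref{eq:A.5} is exactly the paper's: summing the two equations of \eqref{eq:A.1} so that the nonlocal sources cancel, invoking Lemma~\ref{lem:2.3} for $u+w$, and then splitting $w=(u+w)-u$ via $Q_1$ and $P_1$.

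The divergence is in \eqref{eq:A.2}--\eqref{eq:A.3}. The paper does not reprove anything here: it simply observes that when $\delta=1$ and $k_1^2+k_2^2=1$ the operator $\mathcal{L}_{k_1,k_2}$ coincides with the linearized Kolmogorov operator at $\alpha=1$ (up to the harmless phase $\alpha_k$), so \eqref{eq:A.2} and \eqref{eq:A.3} are quoted verbatim from \cite[Theorem~1.1]{LWZ.2020}. You instead try to re-run Section~\ref{sec.3} on $L_0^2$, and this is where there is a real gap. Proposition~\ref{prop:3.1} (equivalently Lemma~\ref{lem:2.2}) carries the factor $(1-\alpha^{-2})$, which vanishes identically at $\alpha^2=1$; it therefore supplies no resolvent bound in this case. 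Your intuition that restricting to $L_0^2$ restores a spectral gap for $(1-\partial_y^2)^{-1}$ is sound, but turning it into a resolvent estimate for the full nonlocal operator is precisely the nontrivial work carried out in \cite{LWZ.2020} (via wave-operator techniques, as the paper remarks in Section~\ref{sec.1}). It is not a cosmetic rerun of Lemma~\ref{lem:3.2}.

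Likewise for \eqref{eq:A.3}: your forced-ODE setup for $P_1u$ is correct (indeed $\mathcal{L}_{k_1,k_2}P_1=0$, so $Q_1u$ evolves autonomously and $P_1u$ is driven only by the $n=\pm1$ modes of $Q_1u$), but you do not actually produce the exponent $\nu^{-1/6}$. A naive Duhamel bound using only \eqref{eq:A.2} or a Lemma~\ref{lem:3.5}-type $L^2_t$ estimate yields $\nu^{-1/2}$; the sharper $\nu^{-1/6}$ in \cite{LWZ.2020} comes from a finer analysis of the low-mode dynamics that is not contained in the tools of this paper. So here too the paper's route---citing the Kolmogorov result---is the one that actually closes.
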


\begin{proof}
For $k_1^2+k_2^2=1$, we only have two case: $k_1=1, k_2=0$ and $k_1=0, k_2=1$. In the case of $k_1=1, k_2=0$, the first equation of \eqref{eq:A.1} is written as
\begin{equation}\label{eq:A.6}
\partial_t u+\nu u-\nu \partial^2_{y}u+ik_1\sin yu
+ik_1\sin y\varphi=0,
\end{equation}
where $(\partial^2_{y}-1)\varphi=u$. In this case, the \eqref{eq:A.6} is the same as the linearized equation of 3-D Kolmogorov flow. Thus, we can easily to obtain the estimates of \eqref{eq:A.2} and  \eqref{eq:A.3} by previous works for the 3-D Kolmogorov flow (see \cite{LWZ.2020}, Theorem 1.1). Next, we consider the estimate of $w$. The \eqref{eq:A.1} can be written as
$$
\begin{cases}
\partial_t u+\left(\nu+ik_1\sin y\left(1-(1-\partial^2_{y})^{-1}\right)-\nu\partial_y^2\right)u=0,\\
\partial_t w+\left(\nu+ik_1\sin y-\nu\partial_y^2\right)w=-ik_1\sin y(1-\partial_y^2)^{-1}u,\\
u(0,y)=u_0(y),\ \ \ w(0,y)=w_0(y).
\end{cases}
$$
Then we have
$$
\partial_t (w+u)+\left(\nu+\mathcal{H}_{k_1,k_2}\right)(w+u)=0,
$$
and by Semigroup theory, Lemma \ref{lem:2.1} and \ref{lem:2.3}, one has
\begin{equation}\label{eq:A.7}
\begin{aligned}
\big\|w+u\big\|_{L^2}&=\left\|e^{-\nu t}e^{-t\mathcal{H}_{k_1,k_2}}(u_0+w_0)\right\|_{L^2}\\
&\lesssim e^{-c|k|^{1/2}\nu^{1/2}t-\nu t}\big\|w_0+u_0\big\|_{L^2}\\
&\lesssim e^{-c\nu^{1/2}t-\nu t}\big(\big\|w_0\big\|_{L^2}+\big\|u_0\big\|_{L^2}\big).
\end{aligned}
\end{equation}
Combining the definition of $Q_1$, $P_1$ and  \eqref{eq:A.7}, one gets
$$
\big\|Q_1(w+u)\big\|_{L^2}\lesssim e^{-c\nu^{1/2}t-\nu t}\big(\big\|w_0\big\|_{L^2}+\big\|u_0\big\|_{L^2}\big),
$$
$$
\big\|P_1(w+u)\big\|_{L^2}\lesssim e^{-c\nu^{1/2}t-\nu t}\big(\big\|w_0\big\|_{L^2}+\big\|u_0\big\|_{L^2}).
$$
Thus, we have
$$
\begin{aligned}
\big\|Q_1 w\big\|_{L^2}&\lesssim\big\|Q_1(u+w)\big\|_{L^2}+\big\|Q_1u\big\|_{L^2}\\
&\lesssim e^{-c\nu^{1/2}t-\nu t}\big(\big\|w_0\big\|_{L^2}+\big\|u_0\big\|_{L^2}\big)+e^{-c\nu^{1/2}t-\nu t}\big\|Q_1u_0\big\|_{L^2}\\
&\lesssim e^{-c\nu^{1/2}t-\nu t}\big(\big\|w_0\big\|_{L^2}+\big\|u_0\big\|_{L^2}\big),
\end{aligned}
$$
and
$$
\begin{aligned}
\big\|P_1 w\big\|_{L^2}&\lesssim\big\|P_1(u+w)\big\|_{L^2}+\big\|P_1u\big\|_{L^2}\\
&\lesssim e^{-c\nu^{1/2}t-\nu t}\big(\big\|w_0\big\|_{L^2}+\big\|u_0\big\|_{L^2}\big)\\
&\ \ \ \ + e^{-\nu t}\big\|P_1 u_0\big\|_{L^2}+e^{-\nu t}\nu^{-1/6}\big\|Q_1 u_0\big\|_{L^2}.
\end{aligned}
$$
The case of $k_1=0,k_2=1$ is similar. This completes the proof of Proposition \ref{prop:A.1}.
\end{proof}

\vskip .06in

\noindent \textbf{Acknowledgement.} The authors would like to thank Professor Weike Wang, Professor Xi Zhang and Dr.Yuanyuan Feng for helpful discussions and useful comments.Y.Wang also would like to thank the School of Business Informatics and Mathematics, University of Mannheim for kindly host.~The work of B.Shi was partially supported by the Jiangsu Funding Program for Excellent Postdoctoral Talent (Grant No.2023ZB116). The work of Y.Wang was partially supported by the National Natural Science Foundation of China (Grant No.12271357), Shanghai Science and Technology Innovation Action Plan (Grant No.21JC1403600) and the CSC-DAAD Postdoc Scholarship.

\bibliographystyle{abbrv}
\bibliography{3D-NS-Helical-ref}

\end{document}